\newtheorem{thm}{Theorem}[section]
\newtheorem{lemma}[thm]{Lemma}
\newtheorem{cor}[thm]{Corollary}
\theoremstyle{definition}
\newtheorem{example}[thm]{Example}
\newtheorem{principle}[thm]{Principle}
\theoremstyle{remark}
\newtheorem{remark}[thm]{Remark}
\numberwithin{equation}{section}
\newcommand*\wrapletters[1]{\wr@pletters#1\@nil}
\def\wr@pletters#1#2\@nil{#1\allowbreak\if&#2&\else\wr@pletters#2\@nil\fi}
\def\eps{\varepsilon}
\def\le{\leqslant} \def\ge{\geqslant}
\def\d{{\,{\bRm d}}}
\def \bN {\mathbb N}
\def \bQ {\mathbb Q}
\def \bR {\mathbb R}
\def \bZ {\mathbb Z}
\def \ba {\mathbf a}
\def \bb {\mathbf b}
\def \bh {\mathbf h}
\def \bk {\mathbf k}
\def \bp {\mathbf p}
\def \bq {\mathbf q}
\def \bu {\mathbf u}
\def \bx {\mathbf x}
\def \by {\mathbf y}
\def \fm {\mathfrak m}
\def \fA {\mathfrak A}
\def \fC {\mathfrak C}
\def \fD {\mathfrak D}
\def \fM {\mathfrak M}
\def \fS {\mathfrak S}
\def \det {\mathrm{det}}
\begin{document}
\title[Random Diophantine equations in the primes]{Random Diophantine equations in the primes}
\author[Philippa Holdridge]{Philippa Holdridge}
\address{}
\email{Philippa.holdridge@warwick.ac.uk}
\subjclass[2020]{11P55,11D72,11P32,11E76}
\keywords{hasse principle, waring-goldbach, circle method, geometry of numbers}
\thanks{}
\date{}

\begin{abstract}We consider equations of the form $a_{1}x_{1}^{k}+\cdots+a_{s}x_{s}^{k}=0$ where the variables $x_{i}$ are all taken to be primes. We define an analogue of the Hasse principle for solubility in the primes (which we call the prime Hasse principle), and prove that, whenever $k\ge2$, $s\ge 3k+2$, this holds for almost all such equations. This is based on work of Brüdern and Dietmann on the Hasse principle \cite{bd2014}. We then prove some further results about prime solubility and the prime Hasse principle, including a partial converse, and some counterexamples. Of particular interest are counterexamples of degree 2, which show that the analogue of the Hasse-Minkowski theorem fails for prime solubility.\end{abstract}
\maketitle
\setcounter{tocdepth}{1}
\tableofcontents

\section{Introduction}
\label{intro}
If a Diophantine equation is solvable in $\bQ$, then it is also solvable in $\bR$ and in $\bQ_{p}$ for all primes $p$. If the converse also holds, then we say that the equation satisfies the Hasse principle. In this paper, we discuss equations of the form:
\begin{equation}\label{maineq}
a_{1}x_{1}^{k}+\cdots+a_{s}x_{s}^{k}=0,
\end{equation}
where $a_{1},\dots,a_{s}$ are nonzero integer constants and $x_{j}$ are prime variables. We write $\ba=(a_{1},\dots,a_{s})$ to mean a tuple of integers, which are always assumed to be nonzero unless stated otherwise, and we let $|\ba|=\max |a_{j}|$.

 The circle method is a useful tool for proving that the Hasse principle holds for certain families of equations, provided there are sufficiently many variables. For example, it can be shown that when $k$ is odd or $a_{j}$ do not all have the same sign, \eqref{maineq} has nontrivial integer solutions for all sufficiently large $s$, so that in particular the Hasse principle holds \cite[Theorem 9.1]{vaughan}. Brüdern and Dietmann showed \cite{bd2014} that the Hasse principle still holds for $s\ge 3k+2$ ``almost always''. More precisely, if we randomly choose an equation in the box $|\ba|\le A$ then the Hasse principle holds with probability $1-o(1)$ as $A\rightarrow \infty$. In the first part of this paper, we will show an analogous result, but where we are interested only in solutions in the primes. For this we will need an analogue of the Hasse principle which we call the prime Hasse principle. Let $\bR^{+}=\{x\in \bR: x>0\}$ and $\bZ_{p}^{\times}=\{x \in \bQ_{p}:|x|_{p}=1\}$.
\begin{principle}\label{pHp}[Prime Hasse principle]
Let $f_{1},\dots,f_{r}\in \bZ[x_{1},\dots,x_{s}]$ be homogeneous. We say that the system of equations $f_{1}(\bx)=\cdots=f_{r}(\bx)=0$ satisfies the prime Hasse principle if the following holds:

If $f_{1}(x_{1},\dots,x_{s})=\cdots=f_{r}(x_{1},\dots,x_{s})=0$ has a solution with $x_{j}\in \bR^{+}$ and for each $p$ it has a solution with $x_{j}\in \bZ_{p}^{\times}$, then it has a solution in the set of prime numbers.
\end{principle}
While we have stated the principle for general homogeneous systems, we will only concern ourselves with single equations which are of the form \eqref{maineq}, with $k\ge2$. The reason for this choice of definition is that the condition of solubility in $\bR^{+}$ and $\bZ_{p}^{\times}$ will turn out to be equivalent to the main term in the circle method being nonzero. Before we proceed, we note some differences between the Hasse principle and prime Hasse principle. Firstly, while the converse to the Hasse principle always holds, this is not true for the prime Hasse principle. For example, take $p^{2}x^{2}-q^{2}y^{2}=0$, where the constants $p$ and $q$ are distinct primes. This has a solution $x=q$, $y=p$, but it does not have a solution in $\bZ_{p}^{\times}$ or $\bZ_{q}^{\times}$. If an equation fails to have a solution in $\bZ_{p}^{\times}$ but does have a solution $(x_{1},\dots,x_{s})$ in the primes, then we must have $x_{j}=p$ for some $j$, which means that we have essentially lost a variable. This means that we expect fewer solutions, but not necessarily none. Since the circle method requires sufficiently many variables, we might not be able to apply it to these equations.

Another difference is that the Hasse principle is known to hold for all rational quadratic forms, but this is not true of the prime Hasse principle, as we will see in Theorem \ref{counterpythag} where we find infinitely many counterexamples which are diagonal in $3$ variables.

The question of, given $k$, finding the smallest $s$ such that \eqref{maineq} satisfies the prime Hasse principle for all $\ba$ is analogous to the Waring-Goldbach problem, in the same way that the Hasse principle for equations of the form \eqref{maineq} is analogous to Waring's problem. In \cite{kw2017}, Kumchev and Wooley show upper bounds for the numbers $H(k)$ for $k\ge 7$ in the Waring Goldbach problem, and we expect that their proof can be modified to give the same result for the prime Hasse principle, which would give the prime Hasse principle for equations of the form \eqref{maineq} in at least $4k(\log k +O(1))$ variables. We will not pursue this here, however.

Cook and Magyar considered something similar to the prime Hasse principle in \cite{cm2014} and showed that it holds for general algebraic varieties whenever a certain quantity (called the Birch rank) is sufficiently large. The solubility in prime variables of the equations $a_{1}x_{1}^{k}+\cdots+a_{s}x_{s}^{k}=n$ was considered in \cite{lt1991} ($k=2$) and in \cite{gl2020} ($k\ge 3$). In these papers it was shown that for $s\ge 2^{k}+1$, these equations have prime solutions, assuming some conditions which are a bit stronger than solubility in $\bR^{+}$ and $\bZ_{p}^{\times}$, and they also obtained an upper bound on the size of the smallest solution. We will also obtain upper bounds on the smallest solution in Theorem \ref{main}. These are of the same shape as those obtained in \cite{bd2014} for the integer case, and are better than those obtained in \cite{lt1991} and \cite{gl2020}, but at the cost of only holding for almost all equations. More recently Talmage in a preprint \cite{t2022} considered systems of equations of the form
\begin{align*}
a_{1,1}x_{1}^{k_{1}}+.&..+a_{s,1}x_{s}^{k_{1}}=0\\
a_{1,2}x_{1}^{k_{2}}+.&..+a_{s,2}x_{s}^{k_{2}}=0\\
&\vdots\\
a_{1,t}x_{1}^{k_{t}}+.&..+a_{s,t}x_{s}^{k_{t}}=0
\end{align*}
where $k_{1}<\cdots<k_{t}$. They showed that the Hasse principle always holds when the number of variables is at least $2s_{\eps}(\bk)+1$, where $s_{\eps}(\bk)$ depends on the strength of bounds on a certain mean value. When $t=1$, we have a single equation of the form \eqref{maineq}, and if one uses mean value bounds coming from Vinogradov's mean value theorem, then we get $s_{\eps}(k)\le (k^{2}+k)/2$ and so the prime Hasse principle is satisfied whenever $s\ge k^{2}+k+1$.

Though we only consider $k\ge2$, we note that the prime Hasse principle always holds for a single equation when $k=1$. When $s\le 2$, this is trivial, and when $s\ge3$ it can be deduced from \cite[Theorem 1.8]{gt2010}. This result also implies the prime Hasse principle for systems of linear equations under certain mild assumptions. We also have an upper bound on the smallest solution of $|\ba|^{c}$ for some constant $c$. This was originally shown by Liu and Tsang in \cite{lt1989}, with $c$ being subsequently improved by various authors, with the current best result being $c\le 25$ by Ching and Tsang \cite{ct2017}.

Define $\mathcal{C}'(k,s)$ to be the set of all $\ba$ such that \eqref{maineq} has a solution in $\bR^{+}$ and in $\bZ_{p}^{\times}$ for all $p$. Our main result is the following:
\begin{thm}\label{main}
Let $k\ge 2$ and $s$ be fixed integers and let $\hat{s}$ be the largest even number with $\hat{s}<s$. Suppose $\hat{s}\ge 3k$. Let $C=2^{1/(\hat{s}-k)}$. Then the number of $\ba\in \mathcal{C}'(k,s)$ with $|\ba|\le A$ such that \eqref{maineq} does not have a solution $(x_{1},\dots,x_{s})$ with $x_{j}$ prime and $x_{j}\le C|\ba|^{1/(\hat{s}-k)}$ for all $1\le j\le s$, is bounded above by
\[O_{\lambda}\left(\frac{A^{s}}{(\log A)^{\lambda}}\right)=o(A^{s})\]
for all $\lambda>0$. In particular, the prime Hasse principle \ref{pHp} is satisfied for almost all equations of the form \eqref{maineq} when $\hat{s}\ge 3k$, with an upper bound on the size of the smallest solution which almost always holds.

Furthermore, if we let $\lambda>0$ and $1/(\hat{s}-k)\le \xi\le 1/(2k)$, $C_{1}(\xi,\psi)=1/\xi^{\psi}$ and $C_{2}(\xi)=2^{\xi}$. Then there exists some $\psi_{0}(\lambda)>0$, such that for all $\psi\ge \psi_{0}(\lambda)$, the number of $\ba\in \mathcal{C}'(k,s)$ with $|\ba|\le A$ such that \eqref{maineq} does not have a solution $(x_{1},\dots,x_{s})$ with $x_{j}$ prime and $C_{1}|\ba|^{\xi}(\log |\ba|)^{-\psi}<x_{j}\le C_{2}|\ba|^{\xi}$ for all $1\le j\le s$, is bounded above by
\[O_{\lambda,\psi}\left(\frac{A^{s}}{(\log A)^{\lambda}}\right)=o(A^{s}).\]

In particular, we also get a lower bound for the largest solution, which almost always holds (although note that this lower bound depends on $\lambda$).
\end{thm}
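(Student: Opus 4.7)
The plan is to apply the Hardy--Littlewood circle method together with an averaging argument over the coefficient vector $\ba$, following the strategy of Br\"udern and Dietmann~\cite{bd2014}. For each $\ba \in \mathcal{C}'(k,s)$ with $|\ba| \le A$, I would introduce the weighted prime-solution count
\[
N(\ba) = \sum_{\substack{X_1 < p_1,\ldots,p_s \le X_2 \\ a_1 p_1^k + \cdots + a_s p_s^k = 0}} (\log p_1)\cdots(\log p_s),
\]
where the window $(X_1, X_2]$ is taken to be $(0, C|\ba|^{1/(\hat{s}-k)}]$ in the first half of the theorem and $(C_1|\ba|^\xi(\log|\ba|)^{-\psi}, C_2|\ba|^\xi]$ in the second. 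Fourier inversion gives $N(\ba) = \int_0^1 \prod_{j=1}^s f(\alpha a_j)\,d\alpha$, where $f(\beta) = \sum_{X_1 < p \le X_2}(\log p)e(\beta p^k)$, so the goal is to show that $N(\ba) > 0$ for all $\ba \in \mathcal{C}'(k,s)\cap[-A,A]^s$ outside a set of cardinality $O_\lambda(A^s(\log A)^{-\lambda})$.

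I would dissect $[0,1]$ into major arcs $\mathfrak{M}$ and minor arcs $\mathfrak{m}$. On $\mathfrak{M}$, a Siegel--Walfisz asymptotic for $f$ followed by the standard analysis produces a main term of the shape $\mathfrak{S}(\ba)\mathfrak{J}(\ba)$, where the singular series $\mathfrak{S}(\ba)$ is positive precisely by the $\bZ_p^\times$-solubility hypothesis and the singular integral $\mathfrak{J}(\ba)$ is positive precisely by the $\bR^+$-solubility hypothesis. The constraint $\xi \le 1/(2k)$ (and the choice $C = 2^{1/(\hat{s}-k)}$ in the first part) ensures the prescribed range is wide enough to support real solutions and hence that $\mathfrak{J}(\ba)$ enjoys its expected lower bound. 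The $\ba$ on which $\mathfrak{S}(\ba)$ is anomalously small form an acceptable exceptional set of size $O_\lambda(A^s(\log A)^{-\lambda})$ by a standard moment bound on the local densities.

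The central task is to control
\[
\sum_{|\ba|\le A,\,\ba\in\mathcal{C}'(k,s)} \left|\int_{\mathfrak{m}} \prod_{j=1}^s f(\alpha a_j)\,d\alpha\right|.
\]
I would apply H\"older's inequality to reduce all but one factor to the mean value $\int_0^1 |f(\alpha)|^{\hat{s}}\,d\alpha$ (using the change of variables $\alpha \mapsto \alpha/a_j$ together with periodicity to absorb the dependence on the $a_j$), exploiting the fact that $\hat{s} \ge 3k$ is precisely the regime in which a Hua-type or Kumchev--Wooley mean-value bound of strength $\ll X_2^{\hat{s}-k+\eps}$ is available. The remaining factor is handled pointwise by a Weyl-type bound $\sup_{\alpha\in\mathfrak{m}} |f(\alpha a_j)| \ll X_2(\log X_2)^{-A}$ for any $A > 0$, valid after excising a small set of $\ba$ whose entries have bad Diophantine structure. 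Combined with the main-term lower bound and Chebyshev's inequality, this converts the average bound into the desired bound on the exceptional set.

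The main obstacle is twofold. First, one needs Weyl and mean-value estimates for the short-interval prime exponential sum $f$; in the second part of the theorem the range $(X_1, X_2]$ shrinks by the factor $(\log|\ba|)^{-\psi}$, so the logarithmic savings from the Weyl bound must comfortably exceed $s\psi$, which forces the dependence $\psi_0(\lambda)$ and caps the permissible $\lambda$ (explaining why $\lambda$ is no longer arbitrary in the second part). Second, the $\ba$-average of $\sup_{\alpha\in\mathfrak{m}}|f(\alpha a_j)|$ requires isolating those $\ba$ for which $\alpha a_j$ fails to lie in the minor arcs of $f$, a delicate combinatorial accounting that has to cohere with the Chebyshev averaging while preserving the final $O_\lambda(A^s(\log A)^{-\lambda})$ bound.
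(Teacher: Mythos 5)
There is a genuine gap at the heart of your minor-arc treatment. After H\"older and the substitution $\alpha\mapsto\alpha/a_j$, you propose to close the argument with a single-variable mean value $\int_0^1|f(\alpha)|^{\hat{s}}\,d\alpha\ll X_2^{\hat{s}-k+\eps}$ for the prime exponential sum, asserting that $\hat{s}\ge 3k$ is "precisely the regime" where a Hua-type or Kumchev--Wooley bound of this strength is available. It is not: a mean value saving the full $k$ powers with only about $3k$ prime variables is far beyond what is known for general $k$ (Hua's lemma needs $2^k$ variables, and Vinogradov/efficient-congruencing technology needs on the order of $k^2$; this is exactly why the route through single-equation mean values, as in Talmage's work cited in the introduction, only reaches $s\ge k^2+k+1$). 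The paper's proof does not use any such bound. Instead it exploits the average over the coefficient vector itself: the doubled count $\Upsilon_t(A,B)$ of Lemma \ref{meanval} treats the $a_j$ as variables and counts, for each pair of prime vectors $(\bp,\bq)$, the lattice points $\ba$ in the dual of the lattice spanned by $(p_1^k,\dots,p_{2t}^k)$ and $(q_1^k,\dots,q_{2t}^k)$, via the geometry of numbers. This needs only $2t\ge k+2$ prime variables, but requires $A\ge B^{2k}$ so that the coefficient box dominates the lattice discriminant; the hypothesis $\hat{s}\ge 3k$ (and the cap $\xi\le 1/(2k)$) enters through this constraint $B^{2k}\le A\le B^{\hat{s}-k}$, not through the width of the real solution range as you suggest. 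Without the coefficient-averaged mean value, your plan cannot reach $\hat{s}\ge 3k$.

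Two further points would also need repair even at larger $s$. First, your pointwise bound $\sup_{\alpha\in\fm}|f(\alpha a_j)|\ll X_2(\log X_2)^{-A}$ "after excising bad $\ba$" is not how the obstruction is resolved: for fixed $a$, dilation by $a$ can move a minor-arc $\alpha$ onto a major arc, and the paper avoids any per-$a$ excision by proving an averaged Weyl-type estimate $\sum_{0<|a|\le A}|g(a\alpha)|^{2^{k-1}}\ll AB^{2^{k-1}}(\log B)^{-\sigma_0}$ on the minor arcs (Lemmas \ref{weyltype1} and \ref{weyltype2}), which then feeds a second-moment bound $\sum_{|\ba|\le A}|\rho_{\ba}(B,\fm)|^2$ via $F(\alpha,-\beta)$. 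Second, the savings available pointwise on the minor arcs are only powers of $\log B$ (the major arcs have moduli up to $(\log B)^{\sigma}$ because of Siegel--Walfisz), so a mean value losing a factor $X_2^{\eps}$ as in your proposal could not be absorbed; this is why Lemma \ref{meanval} is proved with a $(\log B)^{K(t,k)}$ loss rather than the cruder $B^{\eps}$ version quoted from Br\"udern--Dietmann. Your major-arc outline and the final Chebyshev-plus-dyadic deduction (including the role of $\psi_0(\lambda)$ in the second part) are broadly in the spirit of the paper's deduction of Theorem \ref{main} from Theorems \ref{main2} and \ref{main3}, but the minor-arc engine you propose would not prove the theorem as stated.
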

To summarise, Principle \ref{pHp} is satisfied by \eqref{maineq} for almost all $\ba$ as long as $s\ge s_{0}(k)$, where
\[s_{0}(k)=\begin{cases}3k+2\:\text{if $k$ is odd}\\3k+1\:\text{if $k$ is even}\end{cases}.\]
The number of variables in theorem \ref{main} improves on the $k^{2}+k+1$ result we got from \cite{t2022} and the $2^{k}+1$ result from \cite{lt1991} and \cite{gl2020} when $k\ge 4$, and for $k$ sufficiently large improves on the $4k(\log k +O(1))$ bound which we could possibly obtain from modifying \cite{kw2017}. For $k\le 3$, the number of variables is worse than $2^{k}+1$, and for $k=4$ it is also possible that one could adapt a result of Zhao \cite{z2014} on the Waring-Goldbach problem to show that the prime Hasse principle holds for \emph{all} equations of the form \eqref{maineq} in $s_{0}(4)=13$ variables, rather than almost all as we show. However, we have chosen to state the theorem anyway for these cases because the bounds on the smallest solution still appear to be new.

We introduce some notation. We write $p$ or $p_{j}$ to always mean a prime number when occurring as a variable, unless stated otherwise. We also assume that any implied constants are allowed to depend on $s,k,\eps$ unless stated otherwise. We also use the notation $e(x)=e^{2\pi ix}$ and $e_{q}(x)=e^{2\pi ix/q}$ for $x\in \bR$, $q\in \bZ^{+}$. The following definitions make sense for all $k\ge 1$, although in the statements of theorems we will assume $k\ge2$ unless stated otherwise.

Given $B>0$ we define
\begin{equation}\label{gdef}
g(\alpha)=g(\alpha,B)=\sum_{p\le B} e\left( \alpha p^{k}\right) \log p
\end{equation}
and
\[\tilde{g}(\alpha)=\tilde{g}(\alpha,B,\psi)=\sum_{B(\log B)^{-\psi}<p\le B} e\left( \alpha p^{k}\right) \log p.\]
Given a tuple $\ba$ and a function $f$, we use the notation $f_{\ba}$ to mean
\[f_{\ba}(x)=\prod_{j} f(a_{j}x).\]
We also define $\rho_{\ba}(B)$ to be the log-weighted count of prime solutions to $\eqref{maineq}$:
\begin{equation}\label{rhodef}
\rho_{\ba}(B)=\sum_{\substack{a_{1}p_{1}^{k}+\cdots+a_{s}p_{s}^{k}=0\\ p_{j}\le B}} (\log p_{1})\cdots(\log p_{s})=\int_{0}^{1} g_{\ba}(\alpha)d\alpha.
\end{equation}

Note that if we can show $\rho_{\ba}(B)>0$ then $\eqref{maineq}$ will have a solution in primes $x_{j}\le B$. We also similarly define $\tilde{\rho}$, which will be used for the second part of Theorem \ref{main}:
\begin{equation}
\tilde{\rho}_{\ba}(B)=\sum_{\substack{a_{1}p_{1}^{k}+\cdots+a_{s}p_{s}^{k}=0\\ B(\log B)^{-\psi}<p_{j}\le B}} (\log p_{1})\cdots(\log p_{s})=\int_{0}^{1} \tilde{g}_{\ba}(\alpha)d\alpha.
\end{equation}
We define the singular series $\fS_{\ba}$ and the singular integral $J_{\ba}$ by
\begin{equation}\label{ssdef}
\fS_{\ba}=\sum_{q=1}^{\infty} T_{\ba}(q),
\end{equation}
where
\[T_{\ba}(q)=\varphi(q)^{-s}\sum_{\substack{r=1\\(r;q)=1}}^{q}\prod_{j}W(q,a_{j}r),\]
\[W(q,r)=\sum_{\substack{h=1 \\ (h;q)=1}}^{q} e_{q}\left( rh^{k}\right),\]
and
\begin{equation}\label{sidef}
J_{\ba}=\int_{-\infty}^{\infty} v_{\ba}(\beta) d\beta,
\end{equation}
where
\[v(\beta)=\int_{0}^{1}e\left(\beta x^{k}\right)dx.\]
It will turn out that $J_{\ba}\ne 0$ if and only if the $a_{j}$ do not all have the same sign, which is equivalent to \eqref{maineq} having a solution in $\bR^{+}$, and that $\fS_{\ba}\ne 0$ if and only if \eqref{maineq} has a solution in $\bZ_{p}^{\times}$ for each $p$ (see Remark \ref{singsernonzero} and the second part of Theorem \ref{main3}). Our aim is to show that the mean square error in the approximation of $\rho_{\ba}$ by $\fS_{\ba}J_{\ba}B^{s-k}$ is sufficiently small (Theorem \ref{main2}) and that if the singular series and singular integral are nonzero then they are almost always not too small (Theorem \ref{main3}).
\begin{thm}\label{main2}
Let $k\ge 2$, $\hat{s}\ge 3k$ and $\gamma>0$. Then for all \\$2\le B^{2k}\le A\le B^{\hat{s}-k}$:
\[\sum_{|\ba|\le A}\left| \rho_{\ba}(B)-\fS_{\ba}J_{\ba}B^{s-k}\right|^{2}\ll_{\gamma} \frac{A^{s-2}B^{2s-2k}}{(\log B)^{\gamma}},\]
and for $\psi$ sufficiently large in terms of $\gamma$:
\[\sum_{|\ba|\le A}\left| \tilde{\rho}_{\ba}(B)-\fS_{\ba}J_{\ba}B^{s-k}\right|^{2}\ll_{\gamma,\psi} \frac{A^{s-2}B^{2s-2k}}{(\log B)^{\gamma}}.\]
\end{thm}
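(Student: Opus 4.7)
The plan is to apply the Hardy--Littlewood circle method and control every resulting error \emph{in mean square} over $\ba$ rather than pointwise. Set $Q=(\log B)^{D}$ with $D=D(\gamma)$ large, define the major arcs $\fM=\bigcup_{q\le Q}\bigcup_{(r,q)=1}\fM(q,r)$ with $\fM(q,r)=\{\alpha:|q\alpha-r|\le QB^{-k}\}$, and let $\fm=[0,1)\setminus\fM$. Decompose
$$\rho_\ba(B)-\fS_\ba J_\ba B^{s-k}=E_\fM(\ba)+E_\fm(\ba),$$
with $E_\fM(\ba):=\int_\fM g_\ba\,d\alpha-\fS_\ba J_\ba B^{s-k}$ and $E_\fm(\ba):=\int_\fm g_\ba\,d\alpha$, and bound each in mean square separately.

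On the major arcs I apply Siegel--Walfisz to obtain $g(\alpha)=\varphi(q)^{-1}W(q,r)v(\alpha-r/q)+O(Be^{-c\sqrt{\log B}})$. Taking the product of $s$ such factors (with the standard care when $a_jr/q$ is not in lowest terms), integrating, and completing the truncated singular series and singular integral via the usual tail bounds on $T_\ba(q)$ and $v_\ba(\beta)$ yields the pointwise \emph{relative} estimate $|E_\fM(\ba)|\ll|\fS_\ba J_\ba|B^{s-k}(\log B)^{-\Lambda}$ for any fixed $\Lambda$. Since $|J_\ba|\ll|\ba|^{-1}$ (from $|v(\beta)|\ll\min(1,|\beta|^{-1/k})$) and $\fS_\ba$ is uniformly bounded, one has $\sum_{|\ba|\le A}|\fS_\ba J_\ba|^{2}\ll A^{s-2}$, so $\sum_\ba|E_\fM|^{2}\ll A^{s-2}B^{2s-2k}(\log B)^{-2\Lambda}$, which is acceptable once $2\Lambda\ge\gamma$.

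The heart of the argument is the minor-arc mean square. Expanding the square and using the fact that the coordinates of $\ba$ vary independently yields
$$\sum_{|\ba|\le A}|E_\fm(\ba)|^{2}=\int_\fm\!\!\int_\fm K_A(\alpha,\alpha')^{s}\,d\alpha\,d\alpha',\qquad K_A(\alpha,\alpha'):=\sum_{|a|\le A}g(a\alpha)\overline{g(a\alpha')}.$$
Because $\hat s$ is even, Cauchy--Schwarz gives $|K_A(\alpha,\alpha')|^{\hat s}\le(K_A(\alpha,\alpha)K_A(\alpha',\alpha'))^{\hat s/2}$, reducing this contribution to $\bigl(\int_\fm K_A(\alpha,\alpha)^{\hat s/2}\,d\alpha\bigr)^{2}$. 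H\"older and the periodicity of $g$ bound $K_A(\alpha,\alpha)^{\hat s/2}=\bigl(\sum_{|a|\le A}|g(a\alpha)|^{2}\bigr)^{\hat s/2}$ by $A^{\hat s/2}\int_0^1|g|^{\hat s}\,d\alpha$, and a Vinogradov/Hua-type mean value (applicable since $\hat s\ge 3k$) gives $\int_0^1|g|^{\hat s}\ll B^{\hat s-k}\log B$. The remaining $s-\hat s$ factors of $K_A$ are controlled using the uniform Weyl/Vaughan minor-arc bound $|g(\gamma)|\ll B(\log B)^{-T}$ for those pairs $(a,\alpha)$ with $a\alpha\in\fm$, while the ``bad'' coefficients $a$ for which $a\alpha$ falls inside a single-sum Weyl major arc are counted by a standard geometry-of-numbers argument (of Br\"udern--Dietmann type) and shown to be too sparse to dominate.

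The bound for $\tilde\rho_\ba$ follows by the same argument, as restricting primes to $p>B(\log B)^{-\psi}$ affects the uniform and mean-value bounds on $\tilde g$ only by polylogarithmic factors, which are absorbed by taking $\psi$ large in terms of $\gamma$. The principal obstacle is the minor-arc step: extracting the factor $(\log B)^{-\gamma}$ demands a clean separation of ``good'' $(a,\alpha)$ --- where the Weyl bound saves $(\log B)^{-T}$ --- from the sparse set of ``bad'' $(a,\alpha)$, where one must instead rely on counting.
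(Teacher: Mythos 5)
Your minor-arc treatment contains the fatal gap. After writing $\sum_{|\ba|\le A}|E_{\fm}(\ba)|^{2}=\int_{\fm}\int_{\fm}K_{A}(\alpha,\alpha')^{s}\,d\alpha\,d\alpha'$ (which is exactly the paper's $F(\alpha,-\beta)^{s}$ representation), you apply Cauchy--Schwarz to decouple $\alpha$ from $\alpha'$ and reduce to the diagonal quantity $\int(\sum_{a}|g(a\alpha)|^{2})^{\hat{s}/2}d\alpha$, and then to the fixed-coefficient mean value $\int_{0}^{1}|g|^{\hat{s}}$. Two things go wrong. First, the bound $\int_{0}^{1}|g|^{\hat{s}}\ll B^{\hat{s}-k}\log B$ is not available for $\hat{s}$ as small as $3k$: for a single diagonal equation in $\hat{s}\asymp 3k$ prime variables this is a Hua/Hypothesis~K$^{*}$--type conjecture for large $k$ (known thresholds via Vinogradov's mean value theorem are of order $k^{2}$, which is exactly why the $t=1$ case of Talmage cited in the introduction needs $k^{2}+k+1$ variables). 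Second, even granting that mean value, the decoupling loses a factor of $A^{2}$: your route gives at best $\sup_{\fm}K_{A}^{\,s-\hat{s}}\cdot\bigl(A^{\hat{s}/2}\int_{0}^{1}|g|^{\hat{s}}\bigr)^{2}\ll A^{s}B^{2s-2k}(\log B)^{O(1)}$, against the target $A^{s-2}B^{2s-2k}(\log B)^{-\gamma}$. The missing $A^{-2}$ is precisely what the paper's Lemma \ref{meanval} extracts: the \emph{doubled} system $\sum_{j}a_{j}p_{j}^{k}=\sum_{j}a_{j}q_{j}^{k}=0$ generically imposes two linear conditions on $\ba$, and the geometry-of-numbers count of $\ba$ in the rank-$(2t-2)$ dual lattice yields $\Upsilon_{t}(A,B)\ll(\log B)^{K}A^{2t-2}B^{4t-2k}$; your Cauchy--Schwarz step collapses this to a single-equation count (which saves at most one power of $A$, and whose diagonal $p_{j}=q_{j}$ alone already contributes $A^{\hat{s}/2}B^{\hat{s}/2}$). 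Your final remark about counting ``bad'' $a$ via ``geometry of numbers of Br\"udern--Dietmann type'' also misattributes the role of that tool: in both Br\"udern--Dietmann and this paper, the sup-norm over the minor arcs is obtained from a coefficient-summed Weyl inequality ($\sum_{a\le A}|f(a\alpha)|^{2^{k-1}}$, handled with divisor-sum estimates; see Lemmas \ref{weyltype1}--\ref{weyltype2}), not from a count of exceptional coefficients, and the lattice-point counting is reserved for the mean value.

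The major arcs are also not right as stated. Arcs of width $Q/(qB^{k})$ are too wide: for $\alpha\in\fM(q,r)$ and $|a|\le A$ one has $|a(\alpha-r/q)|B^{k}$ as large as $AQ/q\ge B^{2k}$, so the Siegel--Walfisz/partial-summation error (which is multiplied by $1+|a\beta|B^{k}$) swamps the saving $\exp(-c\sqrt{\log B})$ and $g(a\alpha)$ is simply not approximated by $\varphi^{-1}Wv$ uniformly in $a$; the paper must take arcs of width $Q/(AB^{k})$ (so $|a\beta|B^{k}\le Q$). Moreover your claimed pointwise relative estimate $|E_{\fM}(\ba)|\ll|\fS_{\ba}J_{\ba}|B^{s-k}(\log B)^{-\Lambda}$ cannot hold: $\fS_{\ba}J_{\ba}$ may vanish or be extremely small (controlling how often is the content of Theorem \ref{main3}), while the truncation tails $\fS_{\ba}(Q)$ and $J_{\ba}-J_{\ba}(Q/A)$ are not proportionally small; the paper instead bounds these tails in mean square over $\ba$ via \eqref{majorarcsclaim}--\eqref{majorarcsclaim2}, which needs the gcd-splitting argument there. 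Finally, $\fS_{\ba}$ is not uniformly bounded in $\ba$ (e.g.\ it grows when $(a_{1};\ldots;a_{s})$ is large), so the step $\sum_{|\ba|\le A}|\fS_{\ba}J_{\ba}|^{2}\ll A^{s-2}$ as you justify it does not stand either.
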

\begin{thm}\label{main3}
Suppose $k\ge 2$, $s\ge 6$. Let $\eta>0$, then there exists $\delta\asymp \eta$ such that
\begin{equation}\label{main3eq1}\#\{|\ba|\le A: 0<\fS_{\ba}<(\log A)^{-\eta}\}\ll_{\eta,\delta}\frac{A^{s}}{(\log A)^{\delta}}=o\left(A^{s}\right).\end{equation}
Furthermore, if we suppose $s\ge k+1$, then for all $\ba$, $J_{\ba}\ne 0$ if and only if $a_{j}$ do not all have the same sign, and
\begin{equation}\label{main3eq2}\#\{|\ba|\le A: 0<J_{\ba}<|\ba|^{-1}(\log A)^{-\eta}\}\ll_{\eta,\delta}\frac{A^{s}}{(\log A)^{\delta}}=o\left(A^{s}\right).\end{equation}
\end{thm}

Note that if $\ba\notin \mathcal{C}'(k,s)$ (or equivalently if $\fS_{\ba}J_{\ba}=0$ when $s\ge \max\{6,k+1\}$) then the prime Hasse principle \ref{pHp} holds trivially. Hence, if it were the case that $\ba\in \mathcal{C}'(k,s)$ only for $o(A^{s})$ values of $|\ba|\le A$ then Theorem \ref{main} would be vacuous. However, we will show that, when $s\ge 4$, this is not the case, and in fact we will calculate the exact density of $|\ba|\le A$ for which $\ba\in\mathcal{C}'(k,s)$. Finding an exact formula for the density turns out to be easier than in the case of the integer Hasse principle. This is because if $a_{1},\dots,a_{s}$ are coprime, then for sufficiently large $n$, a solution in $(\bZ/p^{n}\bZ)^{\times}$ can be lifted to one in $\bZ_{p}^{\times}$ (see Lemma \ref{singserlemma1}). Solubility is then determined by the congruence classes of $\ba$ (mod $p^{n}$). On the other hand, lifting a solution from $\bZ/p^{n}\bZ$ to $\bQ_{p}$ is more difficult as some of the variables may be divisible by $p$.
\begin{thm}\label{main4}
Let $k\ge 2$, $s\ge 4$. For a prime $p$, let $C_{p}(k,s)$ be the set of $\ba$ (with $a_{j}\ne 0$ for all $j$) such that equation \eqref{maineq} has a solution in $\bZ_{p}^{\times}$. Define
\[\delta_{p}=\lim_{A\rightarrow\infty}\frac{\#\{|\ba|\le A: \ba\in C_{p}(k,s)\}}{(2A)^{s}}\]
and $\delta_{\infty}=1-2^{1-s}$. Then $\delta_{\infty}$ and $\delta_{p}$ are well-defined and positive, and are the densities of $\ba$ such that equation \eqref{maineq} has a solution in respectively $\bR^{+}$ and $\bZ_{p}^{\times}$. Also let
\[\fD_{s,k}=\delta_{\infty}\prod_{p}\delta_{p}.\]
Then this product converges absolutely and
\begin{equation}\label{density_limit}\#\{\ba\in \mathcal{C}'(k,s):|\ba|\le A\}=\fD_{s,k}(2A)^{s}+O\left(\frac{A^{s}}{(\log A)^{s-3}}\right).\end{equation}
Furthermore, we have, for sufficiently large $p$:
\begin{equation}\label{deltaformula}\delta_{p}=1-\left(1-\frac{1}{p^{s}}\right)^{-1}\left(s\left(1-\frac{1}{p}\right)\frac{1}{p^{s-1}}+\frac{s(s-1)}{2}\left(1-\frac{1}{p}\right)^{2}\left(1-g_{p,k}^{-1}\right)\frac{1}{p^{s-2}}\right),\end{equation}
where $g_{p,k}=(p-1;k)$.
\end{thm}
\begin{remark}
More precisely, \eqref{deltaformula} holds whenever $p$ is such that $p>2$, $p\nmid k$ and for every $\ba$, if at least $3$ of the $a_{j}$ are not divisible by $p$, then $\ba\in C_{p}(k,s)$. It will be shown in Lemma \ref{chinonzero} that these conditions hold for all sufficiently large $p$ and we will even do this explicitly.
\end{remark}
One might ask whether this is still true when $s=3$. Browning and Dietmann \cite{bd2009} proved that when $k\ge 2$, $s=3$, the set of $\ba$ which have a nonzero solution in $\bQ$ has density zero, and their proof also implies the stronger result that the set of $\ba$ which are soluble in $\bR$ and $\bQ_{p}$ for all $p$ has density 0. Koymans, Paterson, Santens and Shute made this explicit by finding an asymptotic formula \cite[Theorem 1.1]{kpss2025} for the number of such $\ba$. It follows that almost all $\ba$ do not lie in $\mathcal{C}'(k,3)$ for $k\ge 2$.

It should be noted at this point one further difference between the Hasse principle and prime Hasse principle. When $s$ is sufficiently large in terms of $k$, \eqref{maineq} always has nontrivial solutions in $\bQ_{p}$ for all $p$, and also satisfies the Hasse principle (see \cite[Theorems 1 and 2]{dl1963}), so that as long as there are nontrivial solutions in $\bR$, the equation always has nontrivial integer solutions. It is not true, however, that for sufficiently large $s$, there is always a solution to $\eqref{maineq}$ in $\bZ_{p}^{\times}$. One obstacle is that if $(p-1)\mid k$ then by Fermat's little theorem, $x^{k}\equiv 1$ (mod $p$) for all $x\in \bZ$ with $p\nmid x$. Therefore, no matter how large $s$ gets, by reducing (mod $p$) we see that there can only be a solution in $\bZ_{p}^{\times}$ if $a_{1}+\cdots+a_{s}\equiv 0$ (mod $p$). It follows that when $(p-1)\mid k$,
\begin{equation}\label{deltapwhenpbad}
\delta_{p}\le 1/p,
\end{equation}
where $\delta_{p}$ is as in Theorem \ref{main4}.

However, even if $a_{1}+\cdots+a_{s}\not\equiv 0$ (mod $p$), there might be a solution in the primes if $x_{j}=p$ for some $j$. For example if $k=2$ and $p=3$ then $3-1\mid 2$ but $x^{2}+13y^{2}-z^{2}=0$ has solution $x=2$, $y=3$, $z=11$, even though $1+13-1\equiv 1$ (mod $3$).

One might think then that there is still hope that for sufficiently large $s$, if \eqref{maineq} has solutions in $\bR^{+}$ then it will always have a solution in the primes. Unfortunately, this is not the case either, and in fact the number of counterexamples has nonzero density. Let $s\ge 2$ and $p$ a prime. If $p\nmid a_{i}$ for some $i$ but $p\mid a_{j}$ for all $j\ne i$, then clearly any solution to \eqref{maineq} must have $x_{i}=p$. Therefore, if we let $p_{1},\dots,p_{s}$ be the first $s$ primes, and take $a_{j}=b_{j}\prod_{\substack{1\le i\le s\\i\ne j}} p_{i}$ where $b_{j}$ are any nonzero integers with $p_{i}\nmid b_{j}$ for all $i,j$ then the only possible candidate for a solution is $x_{j}=p_{j}$ for all $j$. If we further suppose that the $a_{j}$ do not all have the same sign then there is a solution in $\bR^{+}$, and if we suppose that $\sum_{j} b_{j}\ne 0$ then $x_{j}=p_{j}$ can not be a solution and we therefore have no prime solutions. It is not hard to show that the set of $\ba$ described above has positive density. Perhaps we can at least show that the density of prime-soluble equations tends to $1$ as $s\rightarrow \infty$? The answer turns out to be yes.
\begin{thm}\label{main5}
Let $k\ge 2$. Then for some $\lambda(k)$,
\[\liminf_{A\rightarrow \infty}\frac{\#\{|\ba|\le A: \text{\eqref{maineq} has a solution in the primes} \}}{(2A)^{s}}=1-O\left(s^{\lambda(k)}2^{-s}\right),\]
where the implied constant depends only on $k$ (not on $s$).

In other words, the (lower) density of the tuples $\ba$ for which \eqref{maineq} has a solution in the primes tends to $1$ as $s\rightarrow \infty$
\end{thm}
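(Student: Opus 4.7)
The plan is to combine Theorem~\ref{main} with the explicit product formula of Theorem~\ref{main4}, then patch the resulting lower bound by including prime solutions in which a small number of variables are forced to equal problematic small primes, as in the ``lose a variable'' discussion from the introduction. For any fixed $s\ge 3k+1$, Theorem~\ref{main} ensures that all but $o_A(A^s)$ tuples $\ba\in\mathcal{C}'(k,s)$ with $|\ba|\le A$ are prime-soluble, so Theorem~\ref{main4} gives immediately
\[
\liminf_{A\to\infty}\frac{\#\{|\ba|\le A:\text{\eqref{maineq} is prime-soluble}\}}{(2A)^s}\;\ge\;\fD_{s,k}=(1-2^{1-s})\prod_p\delta_p.
\]
For primes $p>p_0(k)$ the formula~\eqref{deltaformula} yields $1-\delta_p\ll s^2/p^{s-2}$, so $\prod_{p>p_0(k)}\delta_p\ge 1-O(s^2\,2^{-s})$; together with $\delta_\infty=1-2^{1-s}$, the archimedean-plus-large-prime contribution is already $1-O(s^2\,2^{-s})$.

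The remaining obstruction comes from the finitely many small primes $p\le p_0(k)$ for which $\delta_p$ is bounded away from $1$ uniformly in $s$; for instance $\delta_2\le 1/2$ whenever $k$ is odd, and $\delta_2\le 1/8$ when $k=2$, since $x^k$ takes only a few values modulo powers of $2$ on $\bZ_2^{\times}$. To recover these losses, for each such $p$ and each subset $T\subseteq\{1,\ldots,s\}$ of size bounded in terms of $k$, consider the shifted equation
\[
\sum_{j\notin T}a_j x_j^k\;=\;-p^k\sum_{j\in T}a_j,
\]
obtained by setting $x_j=p$ for $j\in T$. A counting argument over the $\gg s^{|T|}$ available subsets shows that, for all but a super-polynomially-small-in-$s$ fraction of $\ba\pmod{p^{N(k)}}$, some such $T$ makes this shifted equation locally soluble with $x_j\in\bZ_p^{\times}$ for $j\notin T$; the small-prime defect is therefore negligible compared to the archimedean $2^{-s}$.

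The main technical obstacle is the Waring--Goldbach-type adaptation of the variance estimate of Theorem~\ref{main2} and the singular-series lower bound of Theorem~\ref{main3} to these shifted equations, whose right-hand sides are of size $O_k(|\ba|)$. The shift enters the circle-method analysis only through the singular integral (rescaled by a harmless factor independent of $T$) and an additive-character twist in the local character sums; the higher-moment estimates in the proof of Theorem~\ref{main3} are insensitive to such affine twists and pass through verbatim. Combining the archimedean defect $2^{1-s}$, the large-prime defect $O(s^2\,2^{-s})$, and the super-polynomially-small-in-$s$ small-prime defects, amplified by a polynomial-in-$s$ bookkeeping over subsets $T$ and the finitely many problematic primes, yields the claimed bound $1-O(s^{\lambda(k)}2^{-s})$ with $\lambda(k)=O_k(1)$.
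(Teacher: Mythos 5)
Your central idea --- freeze a bounded number of variables at the problematic primes $p$ with $(p-1)\mid k$ and pass to a shifted inhomogeneous equation --- is indeed the idea behind the paper's proof, but the way you propose to close the argument has genuine gaps. The assertion that the variance estimate of Theorem~\ref{main2} and the analysis behind Theorem~\ref{main3} ``pass through verbatim'' for the shifted equations is precisely where the work would lie, and it is not verbatim: the right-hand side $n=-p^{k}\sum_{j\in T}a_{j}$ depends on $\ba$, so when you expand the mean square over $\ba$ the frozen coefficients couple to the twist $e(-n\alpha)$, and the minor-arc mean-value input (Lemma~\ref{meanval}) no longer applies as stated; one would have to redo the circle method uniformly in $n\asymp A$. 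The paper avoids all of this: for the recovered tuples it needs no averaged result at all, only the fixed-coefficient Waring--Goldbach statement Lemma~\ref{inhom} (with $2^{k}+1$ variables, mixed signs, and solubility in $\bZ_{p}^{\times}$ for every $p$), whose hypotheses are verified for \emph{every} tuple satisfying explicit combinatorial conditions --- $(s-t+1)$-wise coprimality and at least $t+1$ coefficients of each sign --- via the Cauchy--Davenport lemma and Lemma~\ref{chinonzero2}; the exceptional density is then bounded by $O(s^{t}2^{-s})$ directly from Lemma~\ref{kcoprimedense} and a binomial count, with no use of Theorems~\ref{main}, \ref{main2}, \ref{main3} or \ref{main4}.

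Two further points in your patching step are wrong or missing. You claim the bad residues modulo $p^{N(k)}$ form a ``super-polynomially-small-in-$s$'' fraction and that this is ``negligible compared to the archimedean $2^{-s}$''; super-polynomial smallness does not imply a bound of the shape $O(s^{\lambda(k)}2^{-s})$, so as stated your bookkeeping cannot produce the theorem. What is true, and what you would have to prove (e.g.\ by Cauchy--Davenport applied to the coefficients coprime to $p$, as in the paper), is that the failure set has density $O\bigl(s^{C(k)}p^{-s+C(k)}\bigr)$, exponentially small, coming from tuples with fewer than roughly $p^{\xi(p)}$ coefficients coprime to $p$. Second, for even $k$ the primes $p=2$ and $p=3$ are both problematic, and a positive-density set of $\ba$ fails at several bad primes simultaneously, so you need one consistent freezing that handles all bad primes at once (disjoint frozen blocks, with the variables frozen at $p_{i}$ contributing units modulo $p_{j}^{\xi(p_{j})}$ by Euler's theorem, which is how the paper arranges compatibility); your proposal treats each bad prime in isolation and never addresses this.
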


We showed earlier that the converse to the prime Hasse principle \ref{pHp} does not always hold, but it turns out that there is a partial converse for equations of the form \eqref{maineq} which almost always holds.
\begin{thm}\label{main6}
Fix some $\lambda>0$ and consider the following partial converse to the prime Hasse principle:

\begin{enumerate}\item[$(*)$]If equation \eqref{maineq} has a solution $\bx=(x_{1},\dots,x_{s})$ with $x_{j}$ prime and $x_{j}> |\ba|^{\lambda}$ for all $j$ then $\ba\in\mathcal{C}'(k,s)$.\end{enumerate}

Then for $k\ge 2$, $s\ge 3$, $\lambda\ge 1$ and $|\ba|$ sufficiently large in terms of $k$, $(*)$ always holds. For $k\ge 2$, $s\ge 4$ and  all $\lambda>0$, we have:
\[\#\{|\ba|\le A: (*)\text{ does not hold}\}\ll_{\lambda} A^{s-\lambda(s-3)}=o(A^{s}),\]
so $(*)$ almost always holds.
\end{thm}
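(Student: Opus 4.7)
The plan is to localize the $p$-adic obstruction to a narrow range of primes once a large prime solution has been exhibited. The pivotal observation is that if $(x_1,\ldots,x_s)$ is a prime solution to \eqref{maineq} with $x_j>|\ba|^{\lambda}$ for each $j$, then for any prime $p\le|\ba|^{\lambda}$ we have $p<x_j$, and since $x_j$ is itself prime, $p\nmid x_j$; hence $x_j\in\bZ_p^\times$ and the integer identity \eqref{maineq} automatically furnishes a $\bZ_p^\times$-solution. Real positivity of $\bx$ handles the archimedean condition, so a failure of $(*)$ can only arise from $\ba\notin C_p(k,s)$ for some prime $p>|\ba|^{\lambda}$.

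For the first assertion ($\lambda\ge 1$) I would argue that no such $p$ can exist once $|\ba|$ is large in terms of $k$. Indeed, $p>|\ba|^{\lambda}\ge|\ba|\ge\max_j|a_j|$ forces $p\nmid a_j$ for every $j$. The remark following Theorem \ref{main4} supplies a threshold $N(k)$ beyond which the implication ``at least three of the $a_j$ coprime to $p$ $\Rightarrow$ $\ba\in C_p(k,s)$'' is valid; taking $|\ba|\ge N(k)$ makes this applicable to every candidate $p>|\ba|\ge N(k)$, and since $s\ge 3$ we conclude $\ba\in\mathcal{C}'(k,s)$.

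For the second assertion ($s\ge 4$, arbitrary $\lambda>0$) the same reductions localize the failure to a prime $p\in(|\ba|^{\lambda},|\ba|]$ with $\ba\notin C_p(k,s)$ (absorbing into an $O(1)$ error the finitely many $\ba$ whose size falls below the $k$-dependent threshold). The remark after Theorem \ref{main4} then forces at least $s-2$ of the $a_j$ to be divisible by $p$. Dropping the existence-of-a-prime-solution hypothesis only inflates the count, so it suffices to bound the number of $\ba$ with $|\ba|\le A$ admitting some prime $p>|\ba|^{\lambda}$ dividing $s-2$ of the coefficients. For a fixed choice of $s-2$ indices and a prime $p$, the contribution is $\ll \min(A,p^{1/\lambda})^s/p^{s-2}$; splitting the outer sum at $p=A^{\lambda}$ and invoking the elementary bounds $\sum_{p>X}p^{-(s-2)}\ll X^{-(s-3)}$ and $\sum_{p\le X}p^{e}\ll X^{e+1}/\log X$ (the latter with $e=s/\lambda-(s-2)$), both pieces total $\ll A^{s-\lambda(s-3)}$, which is $o(A^{s})$ because $s\ge 4$ and $\lambda>0$.

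The whole argument is structural. The one genuinely delicate item is the quantitative ``$p$ sufficiently large in terms of $k$'' clause of the remark after Theorem \ref{main4}: it is precisely what forces the ``$|\ba|$ sufficiently large in terms of $k$'' hypothesis in the first assertion and what demands the $O(1)$ absorption in the second. Everything else reduces to a standard divisor-sum calculation, and I expect no further difficulty.
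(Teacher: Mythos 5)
Your proposal is correct and takes essentially the same route as the paper: any obstructing prime $p$ must exceed $|\ba|^{\lambda}$ because every $x_j$ is a prime larger than that, and then Lemma \ref{chinonzero} (the content of the remark after Theorem \ref{main4}) either gives a contradiction when $\lambda\ge 1$ and $|\ba|$ is large, or forces $p$ to divide at least $s-2$ of the coefficients, after which an elementary divisor count yields $\ll_{\lambda}A^{s-\lambda(s-3)}$. The only cosmetic difference is bookkeeping in the final count: the paper works dyadically on $A/2<|\ba|\le A$ and sums over scales, whereas you keep $|\ba|\le A$ and track the constraint via $\min\left(A,p^{1/\lambda}\right)$ with a split of the prime sum at $A^{\lambda}$; both give the same bound.
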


Theorem \ref{main} and Theorem \ref{main6} together imply the following. If $\hat{s}\ge 3k$ and $\lambda< 1/(2k)$, then for almost all $\ba$, equation \eqref{maineq} has a solution in primes $x_{j}$ with $x_{j}>|\ba|^{\lambda}$ for all $j$ if and only if $\ba\in \mathcal{C}'(k,s)$.

In the final section, we will give some counterexamples to the prime Hasse principle \ref{pHp}. In particular, we will show that for each $k\ge2$ there are infinitely many diagonal equations in $3$ variables which do not satisfy the prime Hasse principle. When $k=2$ this means that the analogue of the Hasse-Minkowski theorem is not true.

\addtocontents{toc}{\protect\setcounter{tocdepth}{0}}
\section*{Acknowledgements and funding}
The author would like to thank their PhD supervisor Sam Chow for his help and advice in writing this paper.

Philippa Holdridge is supported by the Warwick Mathematics Institute Centre for Doctoral Training, and gratefully acknowledges funding from the University of Warwick.

\section*{Organisation and Notation}
In Section \ref{singsersection} we will prove results about the singular series and singular integral, including Theorem \ref{main3}, by adapting the methods of \cite[Section 3]{bd2014}. Then, in Section \ref{geometrynumbersection} we prove an analogue of \cite[Lemma 2.5]{bd2014} for primes using the geometry of numbers. The methods of this section are largely the same as in that paper, adapted to deal with prime variables, but in some places, they will actually be simpler because the contribution from linearly independent pairs of solutions is much easier to bound. In Section \ref{arcsection} we prove Theorem \ref{main2} by the circle method, with the main result of Section \ref{geometrynumbersection} being used to give a mean value bound on the minor arcs. We will then deduce Theorem \ref{main} from Theorems \ref{main2} and \ref{main3}.

We then move on to proving the remaining theorems. In Section \ref{localdensitysection} we will investigate local solubility and prove Theorem \ref{main4}. In Section \ref{primesolublesection} we will prove Theorem \ref{main5}. Finally, in Section \ref{countersection} we will prove Theorem \ref{main6} as well as give some counterexamples to the prime Hasse principle and show in particular that there are infinitely many counterexamples of degree $2$.

We use $\eps$ to mean an abritrary positive constant (generally thought of as small). We use the notation $f(x)\ll g(x)$ to mean $f(x)=O(g(x))$ and $f(x)\asymp g(x)$ to mean that $f(x)\ll g(x)$ and $g(x)\ll f(x)$ both hold. Implied constants in $O$, $\ll$ and $\asymp$ notations will always be allowed to depend on $\eps$ and on $s$, $k$ unless stated otherwise, and we will denote dependence on other parameters with a subscript. For example, $O_{\lambda}$ means that the constant may depend on $\lambda$.

Given a prime $p$ and integers $n\ge 0$ and $a$, we use the notation $p^{n}\Vert a$ to mean that $p^{n}\mid a$ and $p^{m}\nmid a$ for all larger $m$.

\addtocontents{toc}{\protect\setcounter{tocdepth}{1}}
\section{The singular series and singular integral}\label{singsersection}

\subsection{The singular series}
In this subsection we prove a product formula for the singular series. We will see that $\fS_{\ba}\ne 0$ if and only if equation \eqref{maineq} has solutions in $\bZ_{p}^{\times}$ for all $p$. First, let $M_{\ba}(p^{n})$ be the number of solutions to
\begin{equation}\label{mainmod}
a_{1}x_{1}^{k}+\cdots+a_{s}x_{s}^{k}\equiv 0 \left(\text{mod }p^{n}\right)
\end{equation}
where $x_{j}\in (\bZ/p^{n}\bZ)^{\times}$. The $p$-adic density is then:
\begin{equation}\label{paddensdef}
\chi_{p}=\chi_{p}(\ba)=\lim_{n\rightarrow \infty} p^{n}\varphi\left(p^{n}\right)^{-s}M_{\ba}\left(p^{n}\right).
\end{equation}
It is not obvious that the limit exists, but we will show that this is the case.
\begin{lemma}\label{singprod}
If $s\ge 3$, then $\chi_{p}\ge 0$ is well-defined. If $s\ge 5$, then $\fS_{\ba}\ge 0$ is well-defined, and
\[\fS_{\ba}=\prod_{p} \chi_{p},\]
where the product converges absolutely, so that $\fS_{\ba}>0$ if and only if $\chi_{p}>0$ for all $p$.
\end{lemma}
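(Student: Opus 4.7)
The plan is to follow the standard Hardy--Littlewood template in three stages: establish multiplicativity of $T_{\ba}(q)$, identify the $p$-factor with the $p$-adic density $\chi_p$ via orthogonality, and finally prove the required convergence from Weyl-type bounds.

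\textbf{Multiplicativity.} Given coprime $q_1, q_2$ with product $q$, the Chinese Remainder Theorem represents every coprime residue $r \pmod q$ (and every coprime $h$) uniquely as a CRT-combination of coprime residues mod $q_1$ and $q_2$. The exponential $e_q(a_j r h^k)$ splits accordingly as $e_{q_1}(\cdot)\,e_{q_2}(\cdot)$, and after absorbing the (unit) CRT coefficients into the summation variables and using $\varphi(q)^{-s} = \varphi(q_1)^{-s}\varphi(q_2)^{-s}$, the identity $T_{\ba}(q_1 q_2) = T_{\ba}(q_1) T_{\ba}(q_2)$ drops out.

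\textbf{Orthogonality identity.} By orthogonality of additive characters, $p^N M_{\ba}(p^N) = \sum_{h=0}^{p^N-1} \prod_j W(p^N, h a_j)$. The term $h=0$ contributes $\varphi(p^N)^s$. For $h \ne 0$ I would write uniquely $h = p^{N-n} h'$ with $1 \le n \le N$ and $(h', p) = 1$; grouping $x \in (\bZ/p^N\bZ)^\times$ by residue modulo $p^n$ (each fibre of size $p^{N-n}$) gives $W(p^N, p^{N-n} h' a_j) = p^{N-n} W(p^n, h' a_j)$, while the ratio $p^{s(N-n)}/\varphi(p^N)^s$ simplifies (using $\varphi(p^N) = p^{N-n}\varphi(p^n)$ for $n \ge 1$) to $\varphi(p^n)^{-s}$. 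Collecting contributions yields
\[p^N \varphi(p^N)^{-s} M_{\ba}(p^N) = \sum_{n=0}^N T_{\ba}(p^n),\]
so $\chi_p$ exists exactly when $\sum_n T_{\ba}(p^n)$ converges, and then the two coincide.

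\textbf{Convergence and product formula.} Standard Weyl-type estimates for complete exponential sums over prime-power moduli, obtained by Möbius-inverting $W(p^n, r)$ against the full Gauss sum $\sum_{x \bmod p^n} e_{p^n}(r x^k)$, lead to $|T_{\ba}(p^n)| \ll p^{-n(1+\delta)}$ for $n$ sufficiently large, uniformly in $p$, when $s \ge 5$. This gives $\sum_n T_{\ba}(p^n)$ convergent for each $p$. For primes $p \nmid k \prod_j a_j$, a Hensel-lift analysis of the $n=1$ term refines this to $\chi_p = 1 + O_k(p^{-1-\delta})$, so $\sum_p |1 - \chi_p| < \infty$ and the Euler product converges absolutely. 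Multiplicativity then produces
\[\sum_{\substack{q \ge 1 \\ p \mid q \Rightarrow p \le X}} T_{\ba}(q) = \prod_{p \le X} \sum_{n \ge 0} T_{\ba}(p^n),\]
and passing to the limit $X \to \infty$ (justified by the absolute convergence of the Euler product) yields $\fS_{\ba} = \prod_p \chi_p$.

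\textbf{Main obstacle.} The hard point is extracting enough cancellation from $W(p^n, r)$ to make $\sum_n T_{\ba}(p^n)$ absolutely summable uniformly in $p$ under only $s \ge 5$ and arbitrary $k \ge 2$. When $s$ is close to $k$ the crude Weyl exponent $n(1-1/k)$ is on the edge of sufficiency, and one must squeeze additional savings from the coprimality condition $(h,q) = 1$ via Möbius inversion. For the finitely many small primes (in particular those with $p \mid k$) one further needs a separate argument to ensure $\chi_p$ is well-defined, relying on the orthogonality identity together with the stabilisation of $M_{\ba}(p^N)$ as $N$ grows.
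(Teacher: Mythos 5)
Your proposal is correct and follows essentially the same route as the paper: multiplicativity of $T_{\ba}$ via the Chinese Remainder Theorem, the orthogonality identity $p^{-n}\varphi(p^{n})^{s}\sum_{m\le n}T_{\ba}(p^{m})=M_{\ba}(p^{n})$ identifying $\chi_{p}$ with $\sum_{m}T_{\ba}(p^{m})$, and square-root cancellation in $W(q,r)$ giving $T_{\ba}(q)\ll q^{1-s/2+\eps}\prod_{j}(a_{j};q)^{1/2}$, hence absolute convergence and the Euler product for $s\ge 5$. The only difference is presentational: the paper imports the exponential-sum bounds and the vanishing of $W(p^{n},r)$ for large $n$ from Hua, and the series-to-product interchange from a standard theorem, rather than sketching them as you do.
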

We will prove this in a series of steps.
\begin{lemma}\label{Tazero}
Let $p$ be a prime. Define $\nu(p)$ by $p^{\nu(p)}\Vert k$ and
\begin{equation}\label{xidef}\xi(p)=\begin{cases}\nu(p)+2\quad\text{if $p=2$ and $2\mid k$}\\\nu(p)+1\quad\text{otherwise}\end{cases}.\end{equation}
Then for all $n> \xi(p)$ and all $r\in \bZ$ with $p\nmid r$:
\[W(p^{n},r)=0.\]
Hence, if $p\nmid (a_{1};\dots;a_{s})$ and $n> \xi(p)$, then:
\[T_{\ba}(p^{n})=0.\]
\begin{proof}
The first part is \cite[Lemma 8.3]{hua}, and the second part follows immediately by considering the definition of $T_{\ba}$.
\end{proof}
\end{lemma}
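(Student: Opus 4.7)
\emph{Plan.} For the first claim the direct route is simply to quote Hua's Lemma 8.3. To understand what is going on, the standard proof runs as follows: parameterise $(\bZ/p^n\bZ)^\times$ by setting $h=h_0+p^{n-\xi(p)}t$ with $h_0$ a unit representative modulo $p^{n-\xi(p)}$ and $t$ running over a complete residue system modulo $p^{\xi(p)}$. Using the binomial expansion of $h^k$ together with the $p$-adic valuations of $\binom{k}{j}$ (coming from Kummer's theorem and the hypothesis $p^{\nu(p)}\Vert k$), one shows that for $n>\xi(p)$ every term of order $j\ge 2$ is divisible by $p^n$, leaving
\[h^k\equiv h_0^k + k h_0^{k-1} p^{n-\xi(p)}t \pmod{p^n}.\]
Substituting back then factors the sum as
\[W(p^n,r)=\sum_{h_0} e_{p^n}(rh_0^k) \sum_{t\bmod p^{\xi(p)}} e_{p^n}\!\bigl(rk h_0^{k-1} p^{n-\xi(p)} t\bigr),\]
and the inner sum is an additive character in $t$. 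The combination $(r,p)=1$, $p^{\nu(p)}\Vert k$, and the choice of $\xi(p)$ makes this character non-trivial of period dividing $p^{\xi(p)}$, so summing over a full period gives zero.

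The only real subtlety is the case $p=2$: because $\ord_2\binom{k}{2}=\nu(2)-1$ (one less than for odd $p$), the quadratic term in the binomial expansion fails to vanish with the naive choice of lifting exponent $\nu(p)+1$, which forces the extra $+1$ in the definition of $\xi(2)$. For odd $p$ the analogous lifting works without incident. This is the only place a careful case analysis is needed, but since the result is already in the literature the cleanest approach is to cite it rather than reprove it.

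The second claim then falls out immediately. The hypothesis $p\nmid (a_1;\ldots;a_s)$ supplies an index $j$ with $p\nmid a_j$. Since $T_{\ba}(p^n)$ ranges only over $r$ coprime to $p^n$, in particular $(r,p)=1$, we have $(a_jr,p)=1$, so applying the first claim with $a_jr$ in place of $r$ yields $W(p^n,a_jr)=0$. Every summand in the definition of $T_{\ba}(p^n)$ therefore contains a vanishing factor, and hence $T_{\ba}(p^n)=0$.
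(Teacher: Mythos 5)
Your proof is correct and takes the same route as the paper: the first part is delegated to Hua's Lemma 8.3, and the second part is exactly the paper's one-line deduction (some $a_j$ is prime to $p$, and since every $r$ in the sum is coprime to $p^n$, every summand of $T_{\ba}(p^n)$ contains the vanishing factor $W(p^n,a_jr)$). One caution about your expository sketch, which you rightly do not rely on: with the lift $h=h_0+p^{\,n-\xi(p)}t$ the terms of order $j\ge 2$ do \emph{not} all vanish modulo $p^n$ when $p=2$ (even with $\xi(2)=\nu(2)+2$ the quadratic term can survive, e.g.\ when $n=\xi(2)+1$), so the honest argument at $p=2$ proceeds differently (Hua lifts one level at a time and handles $p\mid k$ by a recursion), which is precisely why citing the literature is the right move here.
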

\begin{lemma}\label{singconv}
We have:
\begin{equation}\label{Wbound}
W(q,r)\ll q^{1/2+\eps}(q;r)^{1/2}
\end{equation}
and hence
\begin{equation}\label{Tabound}
T_{\ba}(q)\ll q^{1-s/2+\eps}\prod_{j}(a_{j};q)^{1/2}.
\end{equation}
Furthermore, if $s\ge 5$ then the series for $\fS_{\ba}$ \eqref{ssdef} converges absolutely, so in particular $\fS_{\ba}$ is well-defined.
\begin{proof}
Applying \cite[Lemma 8.5]{hua} gives \eqref{Wbound} in the case $(r;q)=1$. For the general case we let $q'=q/(r;q)$ and $r'=r/(r;q)$. Then by periodicity:
\[W(q,r)=\sum_{\substack{h=1 \\ (h;q)=1}}^{q} e_{q}\left( rh^{k}\right)=(r;q)\sum_{\substack{h=1 \\ (h;q)=1}}^{q'} e_{q'}\left( r'h^{k}\right).\]
This is almost $(r;q)W(q',r')$ but for the condition of $(h;q)=1$ instead of $(h;q')=1$ in the sum. To avoid this problem, we first restrict to the case $q=p^{n}$. Let $p^{\theta}\Vert r$. Note that if $\theta\ge n$ then $W(q,r)=\varphi(q)\le q=(q(r;q))^{1/2}$, so we may assume that $\theta<n$. We then get:
\[W(q,r)=(r;q)W(q',r')\ll (r;q) (q')^{1/2+\eps}=(r;q)\left( \frac{q}{(r;q)}\right)^{1/2+\eps}\le q^{1/2+\eps}(r;q)^{1/2}.\]
It remains to consider the case where $q$ is not a prime power. This will be dealt with using \cite[Lemma 8.1]{hua}, which states that if $(q_{1},q_{2})=1$ then
\begin{equation} \label{hua_lemma8.1}W(q_{1}q_{2},r)=W\left( q_{1},rq_{2}^{k-1}\right) W\left( q_{2},rq_{1}^{k-1}\right).\end{equation}
Let $C$ be a constant such that $W(p^{n},r)\le Cp^{n(1/2+\eps)}(r;p^{n})^{1/2}$ for all $p,n,r$ and suppose $q=p_{1}^{e_{1}}\cdots p_{m}^{e_{m}}$. Then by applying \eqref{hua_lemma8.1} and noting that $(rq_{2}^{k-1};q_{1})=(r;q_{1})$, we have
\[W(q,r)\le C^{m}q^{1/2+\eps}(q;r)^{1/2}.\]
The result \eqref{Wbound} now follows from the fact that $C^{m}\le\tau(q)^{\log_{2} C}\ll q^{\eps}$, where $\tau$ is the divisor function, and \eqref{Tabound} follows from \eqref{Wbound} and the fact that $\varphi(q)\gg q^{1-\eps}$. The last part then follows from \eqref{Tabound}.
\end{proof}
\end{lemma}
\begin{lemma}\label{chisum}
We have the identity:
\[M_{\ba}(p^{n})=p^{-n}\varphi\left(p^{n}\right)^{s}\sum_{m=0}^{n} T_{\ba}\left( p^{m}\right).\]
Hence, for $s\ge 3$, $\chi_{p}$ is well-defined and
\begin{equation}\label{chisumformula}\chi_{p}=\sum_{m=0}^{\infty} T_{\ba}\left( p^{m}\right).\end{equation}
\begin{proof}
We use the fact that
\[q^{-1}\sum_{n=1}^{q} e_{q}(mn)=\begin{cases}
1 &\text{if }m\equiv 0\text{ (mod $q$)}\\
0 &\text{if }m\not\equiv 0\text{ (mod $q$)}\end{cases}.\]
We get:
\begin{align*}
M_{\ba}\left(p^{n}\right)&=\sum_{x_{1},\dots,x_{s}\in (\bZ/p^{n}\bZ)^{\times}}p^{-n}\sum_{m=0}^{p^{n}-1} e_{p^{n}}\left(m\left(a_{1}x_{1}^{k}+\cdots+a_{s}x_{s}^{k}\right)\right)\\&=p^{-n}\sum_{m=0}^{p^{n}-1}\prod_{j}W\left(p^{n},a_{j}m\right).
\end{align*}
Now we write $m=p^{e}m'$ with $p\nmid m'$ and split the sum according to the value of $e$. Noting that
\[W\left(p^{n},a_{j}p^{e}m'\right)=\begin{cases}p^{e}W\left(p^{n-e},a_{j}m'\right)&\text{if }e<n\\ p^{n-1}(p-1)& \text{if }e=n\end{cases},\]
we get
\[M_{\ba}\left(p^{n}\right)=p^{-n}\varphi\left(p^{n}\right)^{s}\sum_{l=0}^{n} T_{\ba}\left(p^{l}\right).\]
Then \eqref{chisumformula} follows, with convergence of the sum for $s\ge 3$ by \eqref{Tabound}.
\end{proof}
\end{lemma}
\begin{lemma}\label{sollift}
If $s\ge 3$ and $p\nmid (a_{1};\dots;a_{s})$, then for all $l\ge \xi(p)$,
\[\chi_{p}=\sum_{m=0}^{l}T_{\ba}\left(p^{m}\right)=\sum_{m=0}^{\xi(p)}T_{\ba}\left(p^{m}\right)=p^{\xi(p)}\varphi\left(p^{\xi(p)}\right)^{-s}M_{\ba}\left(p^{\xi(p)}\right).\]
Hence, if there exists a solution in $(\bZ/p^{\xi(p)}\bZ)^{\times}$, then $\chi_{p}\ge p^{-(s-1)\xi(p)}>0$.
\begin{proof}
This follows easily from Lemmas \ref{Tazero} and \ref{chisum}.
\end{proof}
\end{lemma}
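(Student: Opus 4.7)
The plan is to simply string together the three preceding lemmas. Since we assume $p\nmid (a_{1};...;a_{s})$, Lemma \ref{Tazero} tells us that $T_{\ba}(p^{m})=0$ for every $m>\xi(p)$. This immediately collapses the truncated sum: for any $l\ge \xi(p)$ the tail $\sum_{m=\xi(p)+1}^{l} T_{\ba}(p^{m})$ vanishes, yielding the first equality in the chain. The middle equality is then obtained by applying Lemma \ref{chisum} at $n=\xi(p)$ and solving for $\sum_{m=0}^{\xi(p)} T_{\ba}(p^{m})$ in terms of $M_{\ba}(p^{\xi(p)})$; since the passage to the limit in $\chi_{p}$ is already justified by Lemma \ref{chisum}, and we have just shown all higher terms vanish, this constant value equals $\chi_{p}$.

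For the final assertion, suppose \eqref{mainmod} admits a solution modulo $p^{\xi(p)}$ with all $x_{j}\in (\bZ/p^{\xi(p)}\bZ)^{\times}$. Then $M_{\ba}(p^{\xi(p)})\ge 1$, so the identity just established gives
\[\chi_{p}\ge p^{\xi(p)}\varphi(p^{\xi(p)})^{-s}\ge p^{\xi(p)}\cdot p^{-s\xi(p)}=p^{-(s-1)\xi(p)}>0,\]
using the trivial bound $\varphi(p^{\xi(p)})\le p^{\xi(p)}$. To produce an honest solution in $\bZ_{p}^{\times}$ rather than just modulo every $p^{n}$, we observe that the identity of Lemma \ref{chisum}, combined with what we have just shown, gives $M_{\ba}(p^{n})=p^{-n}\varphi(p^{n})^{s}\chi_{p}>0$ for every $n\ge \xi(p)$. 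Hence \eqref{mainmod} has a unit solution modulo each $p^{n}$, and since $(\bZ_{p}^{\times})^{s}$ is compact (it is a closed subset of $\bZ_{p}^{s}$), a standard diagonal/compactness argument extracts a convergent subsequence whose limit is the desired solution in $\bZ_{p}^{\times}$.

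There is no real obstacle here; the lemma is essentially a bookkeeping consequence of Lemmas \ref{Tazero} and \ref{chisum}, with the only non-formal ingredient being the compactness step used to lift solubility modulo every $p^{n}$ to solubility in $\bZ_{p}^{\times}$.
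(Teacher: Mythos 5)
Your proof is correct and follows exactly the paper's route: the paper's one-line proof also just combines Lemma \ref{Tazero} (vanishing of $T_{\ba}(p^{m})$ for $m>\xi(p)$) with Lemma \ref{chisum}, noting that solubility in $\bZ_{p}^{\times}$ is equivalent to solubility in $(\bZ/p^{n}\bZ)^{\times}$ for all $n$, which is precisely the compactness step you spell out. You have simply supplied the details the paper leaves implicit.
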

There is one more lemma needed to deduce Lemma \ref{singprod}.
\begin{lemma}\label{Tamult}
$T_{\ba}$ is a multiplicative function.
\begin{proof}
Let $q_{1},q_{2}$ be positive integers with $(q_{1};q_{2})=1$. Recall \cite[Lemma 8.1]{hua}, which gives us
\[W(q_{1}q_{2},a_{j}r)=W\left(q_{1},a_{j}rq_{2}^{k-1}\right) W\left(q_{2},a_{j}rq_{1}^{k-1}\right).\]
Now by the Chinese remainder theorem we have:
\begin{align*}
\sum_{\substack{r=1\\ (r;q)=1}}^{q} \prod_{j} &W(q_{1}q_{2},a_{j}r)\\ &=\left( \sum_{\substack{r_{1}=1\\(r_{1};q_{1})=1}}^{q_{1}}\prod_{j} W\left( q_{1}, a_{j}r_{1}q_{2}^{k-1}\right)\right) \left( \sum_{\substack{r_{2}=1\\(r_{2};q_{2})=1}}^{q_{2}} \prod_{j}W\left( q_{2}, a_{j}r_{2}q_{1}^{k-1}\right)\right)\\ &=\left( \sum_{\substack{r_{1}=1\\(r_{1};q_{1})=1}}^{q_{1}} \prod_{j}W\left( q_{1}, a_{j}r_{1}\right)\right)\left( \sum_{\substack{r_{2}=1\\(r_{2};q_{2})=1}}^{q_{2}} \prod_{j}W\left( q_{2}, a_{j}r_{2}\right)\right).
\end{align*}
Combining this with the fact that $\varphi$ is multiplicative gives that $T_{\ba}$ is multiplicative.
\end{proof}
\end{lemma}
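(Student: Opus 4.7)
The plan is to verify $T_{\ba}(q_1 q_2) = T_{\ba}(q_1) T_{\ba}(q_2)$ directly for coprime $q_1, q_2$ from the definition, using two ingredients: the multiplicativity of the Euler totient $\varphi$, which handles the $\varphi(q)^{-s}$ prefactor trivially, and a multiplicativity property of the exponential sum $W(q, r)$ in the modulus $q$, which will factor the inner sum over $r$.

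The first step is to invoke (from Hua's book, as already used in the proof of Lemma \ref{singconv}) the twisted multiplicativity of $W$: $W(q_1 q_2, m) = W(q_1, m q_2^{k-1}) W(q_2, m q_1^{k-1})$ whenever $(q_1, q_2) = 1$. Taking products over $j$ gives
\[\prod_j W(q_1 q_2, a_j r) \;=\; \Bigl(\prod_j W(q_1, a_j r \, q_2^{k-1})\Bigr) \Bigl(\prod_j W(q_2, a_j r \, q_1^{k-1})\Bigr).\]
The crucial observation is that the first factor depends on $r$ only modulo $q_1$, and the second only modulo $q_2$, since each inner sum is periodic with the relevant period.

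The second step is to apply the Chinese remainder theorem: as $r$ ranges over reduced residues modulo $q_1 q_2$, the pair $(r \bmod q_1, \, r \bmod q_2)$ ranges bijectively over pairs of reduced residues modulo $q_1$ and modulo $q_2$. Hence the sum $\sum_{r} \prod_j W(q_1 q_2, a_j r)$ splits as the product of a sum over $r_1$ coprime to $q_1$ (with integrand involving $a_j r_1 q_2^{k-1}$) and an analogous sum over $r_2$. The final clean-up is a change of variable: since $q_2^{k-1}$ is a unit modulo $q_1$, the map $r_1 \mapsto r_1 q_2^{k-1}$ permutes $(\bZ/q_1\bZ)^\times$, so the $q_2^{k-1}$ factor may be absorbed to recover exactly the inner sum defining $T_{\ba}(q_1)$; the analogous substitution works for the $r_2$-sum.

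Combining these factored sums with $\varphi(q_1 q_2)^{-s} = \varphi(q_1)^{-s} \varphi(q_2)^{-s}$ yields the desired identity $T_{\ba}(q_1 q_2) = T_{\ba}(q_1) T_{\ba}(q_2)$. I do not expect any genuine obstacle here: the one point to watch is the bookkeeping of the extraneous factors $q_2^{k-1}$ and $q_1^{k-1}$ introduced by the multiplicativity of $W$, but these are harmless units in their respective moduli and are removed by the change-of-variable step above.
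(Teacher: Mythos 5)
Your proposal is correct and follows essentially the same route as the paper: Hua's twisted multiplicativity of $W$, the Chinese remainder theorem to factor the sum over reduced residues, removal of the unit factors $q_{2}^{k-1}$, $q_{1}^{k-1}$ by re-indexing, and multiplicativity of $\varphi$. No gaps.
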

\begin{proof}[Proof of Lemma \ref{singprod}]
When $s\ge 3$, existence of $\chi_{p}$ follows from Lemma \ref{chisum}.

Suppose now that $s\ge 5$. The sums $\sum_{q}T_{\ba}(q)$ and $\sum_{p}\sum_{k=1}^{\infty}T_{\ba}(p^{k})$ both converge absolutely by Lemma \ref{singconv}. Combining this with multiplicativity of $T_{\ba}$ (Lemma \ref{Tamult}), it follows from \cite[Theorem 4.6]{koukoulopoulos} that
\[\prod_{p} \sum_{m=0}^{\infty} T_{\ba}\left(p^{m}\right)=\sum_{q=1}^{\infty} T_{\ba}(q)=\fS_{\ba}.\]
By Lemma \ref{chisum}, the left hand side is equal to $\prod_{p}\chi_{p}$. It just remains to show absolute convergence of the product, which is equivalent to
\[\sum_{p} |\chi_{p}-1|<\infty.\]
By absolute convergence of the singular series for $s\ge 5$ (Lemma \ref{singconv}):
\[\sum_{p}|\chi_{p}-1|\le \sum_{p}\sum_{n=1}^{\infty} \left|T_{\ba}\left(p^{n}\right)\right|\le \sum_{q=1}^{\infty} \left| T_{\ba}(q)\right|<\infty.\]
\end{proof}
Now we have our product formula, we can find a lower bound for $\fS_{\ba}$ by bounding each $\chi_{p}$. Let $\mathcal{S}(\ba)$ be the set of all primes which divide at least $s-4$ of the coefficients. Then we have the following:
\begin{lemma}\label{chisandwich}
Let $k\ge2$, $s\ge 6$. Then for $p\notin \mathcal{S}(\ba)$ we have, for some constant $c$ independent of $s,\ba$,
\begin{equation}\label{chibound}
1-cp^{-3/2+\eps}\le\chi_{p}\le1+cp^{-3/2+\eps}
\end{equation}
and hence there exists $C$ independent of $s,\ba$ such that
\begin{equation}\label{chiprodbound}
\frac{1}{2}\le \prod_{\substack{p>C\\p\notin \mathcal{S}(\ba)}}\chi_{p}\le 2.
\end{equation}
\begin{proof}
Let $k\ge2$. Then $p\notin \mathcal{S}(\ba)$ implies that at least $5$ of the $a_{j}$ are not divisible by $p$, so by equation \eqref{Tabound}, we get
\[T_{\ba}\left(p^{n}\right)\ll p^{(-3/2+\eps)n}.\]
We also have $T_{\ba}(1)=1$. Equation \eqref{chibound} then follows by summing a geometric series. Now the absolute convergence of the products $\prod_{p} (1\pm cp^{-3/2+\eps})$ implies \eqref{chiprodbound}.
\end{proof}
\end{lemma}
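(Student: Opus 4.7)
The plan is to exploit the bound on $T_{\ba}$ from Lemma \ref{singconv} in combination with the series representation of $\chi_p$ from Lemma \ref{chisum}. Since $p\notin\mathcal{S}(\ba)$ means that $p$ divides at most $s-5$ of the coefficients $a_{j}$, at most $s-5$ of the factors $(a_{j};p^{n})^{1/2}$ appearing in \eqref{Tabound} can be as large as $p^{n/2}$, while the remaining (at least five) equal $1$. Substituting into \eqref{Tabound} therefore gives
\[T_{\ba}(p^{n})\ll p^{n(1-s/2+\eps)}\cdot p^{n(s-5)/2}=p^{n(-3/2+\eps)},\]
with an implied constant depending only on $s,k,\eps$ but not on $\ba$. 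This is the crucial uniformity and is where the hypothesis $p\notin\mathcal{S}(\ba)$ actually gets used.

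Next I would apply Lemma \ref{chisum}, which writes $\chi_{p}=\sum_{n\ge 0}T_{\ba}(p^{n})$, together with $T_{\ba}(1)=1$, to obtain
\[|\chi_{p}-1|\le \sum_{n=1}^{\infty}|T_{\ba}(p^{n})|\ll \sum_{n=1}^{\infty}p^{n(-3/2+\eps)}\ll p^{-3/2+\eps}\]
by summing the geometric series. This is \eqref{chibound}. For the finitely many small primes where the geometric ratio is not bounded strictly away from $1$, the constant $c$ can simply be enlarged; since we only care about the tail $p>C$ in the next step, this absorption is harmless.

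For the product statement \eqref{chiprodbound}, the key observation is that $\sum_{p}p^{-3/2+\eps}<\infty$, so both products $\prod_{p}(1\pm cp^{-3/2+\eps})$ converge absolutely and their tails over $p>C$ tend to $1$ as $C\to\infty$. Choose $C$ large enough that (i) $1-cp^{-3/2+\eps}>0$ for all $p>C$, (ii) $\prod_{p>C}(1-cp^{-3/2+\eps})\ge 1/2$, and (iii) $\prod_{p>C}(1+cp^{-3/2+\eps})\le 2$. By \eqref{chibound}, each factor of the restricted product lies in $[1-cp^{-3/2+\eps},\,1+cp^{-3/2+\eps}]$. Dropping the primes in $\mathcal{S}(\ba)$ from the product in (ii) can only increase it (those factors lie in $(0,1]$), and dropping them from the product in (iii) can only decrease it (those factors are $\ge 1$), so both inequalities pass to $\prod_{p>C,\,p\notin\mathcal{S}(\ba)}\chi_{p}$, giving \eqref{chiprodbound}.

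No genuine obstacle is expected: the argument is routine geometric summation once the pointwise bound \eqref{Tabound} has been sharpened using the five ``spare'' coefficients coprime to $p$. The only point that needs a little care is the direction of the monotonicity when restricting the product to $p\notin\mathcal{S}(\ba)$, which is why the lower and upper bounds must be handled with slightly different remarks about the sign of $\log\chi_{p}$.
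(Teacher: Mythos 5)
Your proposal is correct and follows essentially the same route as the paper: you sharpen \eqref{Tabound} using the at least five coefficients coprime to $p$ to get $T_{\ba}(p^{n})\ll p^{n(-3/2+\eps)}$, sum the geometric series via Lemma \ref{chisum} for \eqref{chibound}, and deduce \eqref{chiprodbound} from the absolute convergence of $\prod_{p}(1\pm cp^{-3/2+\eps})$. Your extra remark on the monotonicity when dropping the primes of $\mathcal{S}(\ba)$ is a correct detail the paper leaves implicit.
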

We now let $C$ be as in Lemma \ref{chisandwich} and define $\mathcal{P}(\ba)=\mathcal{S}(\ba)\cup \{p:p\le C\}$ and
\begin{equation}\label{lambdadef}\lambda(p)=\min\{l:p^{l}\Vert a_{j} \text{ for some }j\},\end{equation}
\[P(\ba)=\prod_{p\in \mathcal{P}(\ba)}p,\quad P_{0}(\ba)=\prod_{\substack{p\in \mathcal{S}(\ba)\\p>C}} p,\quad H=\prod_{p\le C} p.\]
Note that also
\[(a_{1};\dots;a_{s})=\prod_{p\in \mathcal{P}(\ba)}p^{\lambda(p)}.\]
We aim to prove firstly that if $P(\ba)$ is small, then $\fS_{\ba}$ is not too small, and secondly that $P(\ba)$ is small for most $\ba$.
\begin{lemma}\label{singserlemma1}
Let $s\ge 3$. We have $\chi_{p}>0$ if and only if \eqref{maineq} has a solution in $\bZ_{p}^{\times}$, if and only if it has a solution in $(\bZ/p^{\xi(p)+\lambda(p)}\bZ)^{\times}$ and in this case we have
\[\chi_{p}\ge p^{-\xi(p)(s-1)+\lambda(p)}.\]

Further, suppose that $s\ge5$, $|\ba|\le A$, $\fS_{\ba}\ne 0$. Let $\eta>0$, $\delta=\eta/(2(s-1))$ and let $c>0$ be a sufficiently small constant. Then if $P(\ba)\le c(\log A)^{\delta}$ then $\fS_{\ba}\ge(\log A)^{-\eta}$.
\begin{proof}
From the definition of $\chi_{p}$, if $\chi_{p}>0$ then $M_{\ba}(p^{n})>0$ for sufficiently large $n$. This means that \eqref{maineq} has a solution in $(\bZ/p^{n}\bZ)^{\times}$ for sufficiently large $n$, and this can be lifted to a solution in $\bZ_{p}^{\times}$ by Hensel's lemma. If we have a solution in $\bZ_{p}^{\times}$, then this reduces (mod $p^{n}$) to give a solution in $(\bZ/p^{n}\bZ)^{\times}$ for every $n\ge 1$.

Now suppose that there is a solution in $(\bZ/p^{\xi(p)+\lambda(p)}\bZ)^{\times}$. When $p\nmid (a_{1};\dots;a_{s})$, Lemma \ref{sollift} says that $\chi_{p}\ge p^{-\xi(p)(s-1)}$, so suppose $p\mid (a_{1};\dots;a_{s})$. Let $b_{j}=a_{j}/p^{\lambda(p)}$. Then by Lemma \ref{sollift} and Lemma \ref{chisum}, for $l>\xi(p)$, we have $M_{\bb}(p^{l})\ge p^{(s-1)(l-\xi(p))}$. Also, for $l>\lambda(p)$ and $x_{j}\in \bZ$ with $p\nmid x_{j}$, we have
\[a_{1}x_{1}^{k}+\cdots+a_{s}x_{s}^{k}\equiv 0\: \left(\text{mod }p^{l}\right)\]
if and only if
\[b_{1}x_{1}^{k}+\cdots+b_{s}x_{s}^{k}\equiv 0\: \left(\text{mod }p^{l-\lambda(p)}\right).\]
Therefore, if $l>\lambda(p)+\xi(p)$ then: 
\[M_{\ba}(p^{l})=p^{s\lambda(p)}M_{\bb}(p^{l-\lambda(p)})\ge p^{s\lambda(p)+(s-1)(l-\xi(p)-\lambda(p))}.\]
So $\chi_{p}\ge p^{-\xi(p)(s-1)+\lambda(p)}$.

Now let $|\ba|\le A$ and suppose the stated hypotheses hold. We have (recalling the definition \eqref{xidef} of $\xi(p)$):
\begin{align*}
\fS_{\ba}&\ge \frac{1}{2}\prod_{p\in \mathcal{P}(\ba)} \chi_{p}\\&\ge \frac{1}{2}\left(\prod_{p|k} p^{-\nu(p)(s-1)}\right)2^{-(s-1)}\left(\prod_{p\in \mathcal{P}(\ba)}p^{-(s-1)}\right)\left(\prod_{p} p^{\lambda(p)}\right)\\&=2^{-s}k^{-(s-1)}P(\ba)^{-(s-1)}(a_{1};\dots;a_{s}).
\end{align*}
Recalling $\delta=\eta/(2(s-1))$ and choosing $c$ sufficiently small then gives the result.
\end{proof}
\end{lemma}
\begin{remark}\label{singsernonzero}
It follows from Lemma \ref{singprod} and Lemma \ref{singserlemma1} that $\fS_{\ba}>0$ if and only if \eqref{maineq} has a solution in $\bZ_{p}^{\times}$ for all $p$.
\end{remark}
\begin{lemma}
Suppose $k\ge2$, $s\ge 6$. For all $\delta>0$, $c>0$, let
\[\mathcal{A}=\{|\ba|\le A:P(\ba)>c(\log A)^{\delta}\}.\]
Then for all $0<\eps<1$,
\[\#\mathcal{A}\ll_{c,\eps} \frac{A^{s}}{(\log A)^{\delta(s-5-\eps)}}=o\left(A^{s}\right).\]
\begin{proof}
When $k\ge2$, by definition, $P(\ba)=P_{0}(\ba)H$. Note that if $p\in \mathcal{P}(\ba)$ and $p>C$ then $p\in \mathcal{S}(\ba)$ so $p$ divides at least $s-4$ of the $a_{j}$. Hence $P_{0}(\ba)^{s-4}|a_{1}\cdots a_{s}$. If $P_{0}(\ba)=m$, then there exists $m_{1},\dots,m_{s}$ such that $m_{1}\cdots m_{s}=m^{s-4}$ and $m_{j}|a_{j}$ for each $j$. Hence the number of $\ba$ with $P_{0}(\ba)=m$ is at most
\[\sum_{m_{1}\cdots m_{s}=m^{s-4}} \prod_{j} \left \lfloor \frac{A}{m_{j}}\right \rfloor\le \sum_{m_{1}\cdots m_{s}=m^{s-4}} \frac{A^{s}}{m^{s-4}} \ll \frac{A^{s}}{m^{s-4-\eps}}.\]
Now summing over $m$, and recalling that $s\ge 6$, gives
\[\#\mathcal{A}\ll \sum_{m>c(\log A)^{\delta}/H} \frac{A^{s}}{m^{s-4-\eps}}\ll c^{-s+5+\eps}A^{s}(\log A)^{-\delta(s-5-\eps)}.\]
\end{proof}
\end{lemma}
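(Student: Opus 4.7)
The plan is to count $\ba\in\mathcal{A}$ by stratifying according to the value of $m=P_0(\ba)$, and for each $m$ exploit the strong divisibility constraint that membership in $\mathcal{S}(\ba)$ forces on the coefficients $a_j$. Since $P(\ba)=P_0(\ba)H$ with $H$ a constant depending only on $C$ (and hence only on $k,s$), the hypothesis $P(\ba)>c(\log A)^{\delta}$ forces $m>c'(\log A)^{\delta}$ for some constant $c'=c/H$, so it suffices to bound $\#\{\ba:P_0(\ba)=m\}$ and sum the resulting estimate over $m$ in this range.

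The key divisibility observation is that whenever $p>C$ and $p\in\mathcal{P}(\ba)$, we have $p\in\mathcal{S}(\ba)$, which by definition means $p$ divides at least $s-4$ of the $a_j$. Multiplying over all such primes $p$ (they are distinct and each appears to at least the first power in the required product) yields $P_0(\ba)^{\,s-4}\mid a_1a_2\cdots a_s$. So for fixed $m$, any $\ba$ with $P_0(\ba)=m$ admits a factorisation $m^{s-4}=m_1m_2\cdots m_s$ with $m_j\mid a_j$. The number of $\ba$ with $|\ba|\le A$ compatible with a given ordered factorisation is $\prod_j\lfloor A/m_j\rfloor\le A^s/m^{s-4}$, and the number of ordered $s$-tuple factorisations of $m^{s-4}$ is $\tau_s(m^{s-4})\ll m^{\eps}$ (standard divisor-function estimate). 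Hence
\[
\#\{\ba:|\ba|\le A,\,P_0(\ba)=m\}\ll \frac{A^s}{m^{s-4-\eps}}.
\]

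Summing over $m>c'(\log A)^{\delta}$, and using $s\ge6$ to ensure $s-4-\eps>1$ so that the tail of the series $\sum_m m^{-(s-4-\eps)}$ converges and is dominated by its first term, we obtain
\[
\#\mathcal{A}\ll \sum_{m>c'(\log A)^{\delta}} \frac{A^s}{m^{s-4-\eps}}\ll_{c,\eps}\frac{A^s}{(\log A)^{\delta(s-5-\eps)}},
\]
which is $o(A^s)$ for any choice of $\eps<s-5$. The mild step that deserves care is the passage from the single inequality $P_0(\ba)^{s-4}\mid\prod_j a_j$ to the claim that the number of $\ba$ realising a given value of $m$ is controlled by $A^s/m^{s-4-\eps}$; once one organises this by choosing, for each $\ba$, an actual factorisation $m^{s-4}=\prod m_j$ with $m_j\mid a_j$ (which exists precisely because each prime power of $m$ is absorbed by at least $s-4$ of the coordinates), the bookkeeping reduces to the divisor estimate. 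I anticipate no real obstacle beyond this combinatorial step; the rest is a direct series summation.
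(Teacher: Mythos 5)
Your proposal is correct and follows essentially the same argument as the paper: stratify by $m=P_0(\ba)$, use that each prime $p>C$ in $\mathcal{P}(\ba)$ divides at least $s-4$ of the $a_j$ to get $m^{s-4}\mid a_1\cdots a_s$, count tuples via ordered factorisations of $m^{s-4}$ and the divisor bound $\ll m^{\eps}$, then sum the tail over $m\gg(\log A)^{\delta}$. The only cosmetic difference is your remark that $\eps<s-5$ suffices, whereas the statement fixes $0<\eps<1$, which is subsumed since $s\ge 6$.
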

We may now prove the first half of \ref{main3}.
\begin{proof}[Proof of Theorem \ref{main3} (part 1)]
Let $k\ge 2$, $\eta>0$ and $s\ge 6$. It follows immediately from the previous two lemmas that
\[\#\{|\ba|\le A: 0<\fS_{\ba}<(\log A)^{-\eta}\}\ll\frac{A^{s}}{(\log A)^{\delta'(s-5-\eps)}},\]
where $\delta'=\eta/(2(s-1))$. Now take $\eps=1/2$, $\delta=\delta'(s-5-\eps)$ and note that $\delta\asymp \eta$ as required.
\end{proof}
It remains to prove the part about $J_{\ba}$.
\subsection{The singular integral}
\begin{proof}[Proof of Theorem \ref{main3} (part 2)]
Recall \eqref{sidef}, which implies that
\[J_{\ba}=\int_{-\infty}^{\infty}v_{\ba}(\beta)d\beta=\int_{-\infty}^{\infty} \int_{0}^{1}\dots \int_{0}^{1}e\left(\left(a_{1}x_{1}^{k}+\cdots+a_{s}x_{s}^{k}\right)\beta\right)dx_{1}\cdots dx_{s}d\beta.\]
This is almost the same as the singular integral considered in \cite[Section 3.1]{bd2014}, but with $v(\beta)=\int_{0}^{1}e(\beta x^{k})dx$ instead of $\int_{-1}^{1}e(\beta x^{k})dx$. Assume without loss of generality that $|a_{s}|\ge |a_{j}|$ for $1\le j\le s-1$. The proof in \cite[Section 3.1]{bd2014} is split into two cases based on the parity of $k$. When $k$ is even, we have
\[\int_{-1}^{1}e(\beta x^{k})dx=2\int_{0}^{1}e(\beta x^{k})dx,\]
so it follows from the results of \cite[Section 3.1]{bd2014} that, when $k$ is even we have
\begin{equation}\label{JaintermsofE}J_{\ba}=k^{-s}|a_{1}\cdots a_{s}|^{-1/k}E(0),\end{equation}
where $E(\tau)$ is a certain integral over the region $\fC(\tau)$, which is the set of $(\eta_{1},\dots,\eta_{s-1})$ satisfying
\begin{align*}&0\le \eta_{j}\le |a_{j}|\quad (1\le j<s),\\&0\le \tau-\sigma_{s}\sigma_{1}\eta_{1}-\sigma_{s}\sigma_{2}\eta_{2}-\cdots-\sigma_{s}\sigma_{s-1}\eta_{s-1}\le |a_{s}|,\end{align*}
where $\sigma_{j}=a_{j}/|a_{j}|$.\footnote{In \cite{bd2014} it is stated without proof that $E(\tau)$ is of bounded variation near $0$. This can be proven using a method similar to that in the proof of \cite[Lemma 2.3]{c2014}.} We remark that it is assumed in \cite{bd2014} that $k\ge 3$, but the proof of \eqref{JaintermsofE} still holds when $k=2$.

Now if all the $a_{j}$ have the same sign then it is clear that $\fC(0)=\{(0,..,0)\}$ and hence $J_{\ba}=0$. Conversely, if the $a_{j}$ do not all have the same sign then we follow the rest of the proof from \cite[Section 3.1]{bd2014}. The authors of \cite{bd2014} claim that this leads to a bound of $J_{\ba}\gg |\ba|^{-1}$, but this is not true in general. In the case where $a_{s}$ has opposite sign to all of $a_{1},\dots,a_{s-1}$, the proof from \cite{bd2014} still goes through, but in the other case, we have, for certain $r,t \in \{1,\dots,s-1\}$,
\[E(0)\gg |a_{1}\cdots a_{t}|^{1/k}|a_{r-1}|^{(s-t-1)/k}|a_{r-1}|^{(1-k)/k}.\]
If we assume that $A \ge |a_{i}|\ge A(\log A)^{-\delta}$ for all $i$, then we still recover a bound of $J_{\ba}\ge A^{-1}(\log A)^{-O(\delta)}$, which gives the result.

Now for the odd case, if we follow the proof from \cite[Section 3.1]{bd2014} as we did in the even case, then we find that it is only the first step of replacing $\int_{-1}^{1}e(a_{j}\beta\xi^{k})d\xi$ with $2\int_{0}^{1}e(a_{j}\beta\xi^{k})dx$ which fails. However, in our case we start with $v(a_{j}\beta)=\int_{0}^{1}e(a_{j}\beta\xi^{k})d\xi$ and so we do not have this problem, and we get the same result for odd $k$ as well. This completes the proof of Theorem \ref{main3}\end{proof}

\section{Applications of the geometry of numbers}\label{geometrynumbersection}
The main result of this section will be to establish a mean value estimate that will help us with the minor arcs. The proof will be largely the same as that of Lemma 2.5 of \cite{bd2014}, but will in some places be simpler. As in section 2 of \cite{bd2014}, we will use facts about sublattices of $\bZ^{n}$. Given a sublattice $\Lambda$ of $\bZ^{n}$, we define its rank, its dual $\Lambda^{*}$ and the quantity $G(\Lambda)$ as in \cite[Section 2.1]{bd2014}. We define $\det \Lambda$ (the determinant of $\Lambda$) to be the volume of the parallelepiped spanned by a basis for $\Lambda$ (in \cite{bd2014}, this is called the discriminant and is denoted $d(\Lambda)$). We will also use \cite[Lemma 2.1]{bd2014}, which says that $\det\Lambda^{*}=(\det \Lambda)/G(\Lambda)$, as well as the following lemma.
\begin{lemma}\label{latticebound}
If $\Lambda\subset \bZ^{n}$ is a lattice of rank $r$ and $R\gg \det \Lambda$ then the ball of radius $R$ contains $O(R^{r}/\det \Lambda)$ points of $\Lambda$.
\begin{proof}
This is \cite[Lemma 1 (v)]{h-b2002}.
\end{proof}
\end{lemma}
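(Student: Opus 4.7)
The plan is to estimate $\#(\Lambda \cap B(R))$ via Minkowski's theorem on successive minima. I would let $\lambda_1 \le \lambda_2 \le \cdots \le \lambda_r$ denote the successive minima of $\Lambda$ with respect to the Euclidean unit ball $B \subset \bR^n$. Minkowski's second theorem gives $\lambda_1 \cdots \lambda_r \asymp d(\Lambda)$, with constants depending only on $r$. Crucially, because $\Lambda \subseteq \bZ^n$, every nonzero lattice vector has length at least $1$, so $\lambda_1 \ge 1$; combined with the product formula this gives $\lambda_r \le \lambda_1 \cdots \lambda_r \ll d(\Lambda)$. Hence the hypothesis $R \gg d(\Lambda)$, with a sufficiently large implied constant, forces $R \ge \lambda_i$ for every $i$.

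Next I would pass from the $\lambda_i$ to an actual basis. By a classical result of Mahler (or Minkowski's basis reduction), $\Lambda$ admits a basis $b_1, \ldots, b_r$ with $|b_i| \asymp \lambda_i$. Taking the Gram--Schmidt orthogonalisation $b_1^*, \ldots, b_r^*$, each $|b_i^*|$ is then also $\asymp \lambda_i$ up to constants depending only on $r$. Writing a general $x = \sum_{i=1}^r n_i b_i \in \Lambda \cap B(R)$ and projecting successively onto the spans of $b_r^*, b_{r-1}^*, \ldots, b_1^*$ yields $|n_i| \ll R/\lambda_i$ for each $i$.

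Since $R \ge \lambda_i$, the number of admissible tuples $(n_1,\ldots,n_r) \in \bZ^r$ is
\[
\prod_{i=1}^{r} \left(1 + O\!\left(R/\lambda_i\right)\right) \ll \prod_{i=1}^r \frac{R}{\lambda_i} = \frac{R^r}{\lambda_1 \cdots \lambda_r} \asymp \frac{R^r}{d(\Lambda)},
\]
which is the claimed bound.

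The main obstacle is the basis estimate $|b_i| \asymp \lambda_i$: vectors realising the successive minima need not form a $\bZ$-basis once $r \ge 5$, so one must appeal to Mahler's construction (or an LLL-type reduced basis) to produce a basis whose vectors are genuinely comparable in length to the successive minima. Everything else is a routine coordinate count, and the integrality hypothesis $\Lambda \subseteq \bZ^n$ enters only to ensure $\lambda_1 \ge 1$, which is what converts the convenient-looking hypothesis $R \gg d(\Lambda)$ into the condition $R \gg \lambda_r$ actually required for the counting argument.
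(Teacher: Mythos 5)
Your argument is correct, but it is worth noting that the paper does not prove this lemma at all: it simply quotes it from Heath--Brown \cite[Lemma 1 (v)]{h-b2002}. What you have written is essentially a self-contained proof of that cited result, by the standard geometry-of-numbers route: Minkowski's second theorem gives $\lambda_{1}\cdots\lambda_{r}\asymp_{r} d(\Lambda)$, the integrality $\Lambda\subseteq\bZ^{n}$ gives $\lambda_{1}\ge 1$ and hence $\lambda_{r}\ll d(\Lambda)\ll R$, and a reduced basis with $|b_{i}|\ll_{r}\lambda_{i}$ lets you count coefficients to get $\prod_{i}(1+O(R/\lambda_{i}))\ll R^{r}/d(\Lambda)$. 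Two small points deserve explicit justification if this were to be written out in full. First, the lower bound $|b_{i}^{*}|\gg_{r}\lambda_{i}$ for the Gram--Schmidt vectors is not automatic from $|b_{i}|\asymp\lambda_{i}$; it follows from the identity $\prod_{j}|b_{j}^{*}|=d(\Lambda)$ together with the upper bounds $|b_{j}^{*}|\le |b_{j}|\ll_{r}\lambda_{j}$ and Minkowski's second theorem, so $|b_{i}^{*}|\ge d(\Lambda)/\prod_{j\ne i}|b_{j}^{*}|\gg_{r}\lambda_{i}$ --- a one-line argument you should include rather than assert. Second, in the inductive coefficient bound one should check that subtracting $n_{r}b_{r}$ keeps the remaining vector of size $O_{r}(R)$ (it does, since $|n_{r}||b_{r}|\ll R$), so that the bounds $|n_{i}|\ll_{r} R/\lambda_{i}$ hold uniformly. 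With those details filled in, your proof is complete and is, in spirit, the same argument underlying the Heath--Brown lemma the paper invokes; the only thing the citation buys the paper is brevity, while your version makes transparent exactly where the hypotheses $\Lambda\subseteq\bZ^{n}$ and $R\gg d(\Lambda)$ are used.
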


The following lemma is similar to \cite[Lemma 2.4]{bd2014}, but is simpler. We define a semiprime to be a positive integer with exactly two prime factors. We also write $\omega(n)$ for the number of distinct prime factors of $n$, and define $\log_{(m)}$ inductively by $\log_{(1)} x = \max\{1,\log x\}$, $\log_{(m+1)} x=\max\{1,\log_{(m)} x\}$.
\begin{lemma}\label{semiprimepower}
Given $d\ge 1$, the number of semiprimes $x,y\le B$ such that
\[x^{k}\equiv y^{k}(\text{mod }d)\]
is bounded by
\[O\left(\tau(d)^{\lambda(k)}(\log_{(3)} d)\left(B+B^{2}d^{-1}\right)\right),\]
where $\lambda(k)=\log k/\log 2$.
\begin{proof}
Suppose first that $x$ is coprime to $d$. The key idea is to use the fact that for each (coprime) $k$-th power residue $\alpha$ (mod $d$), there are $O(k^{\omega(d)})=O(\tau(d)^{\lambda(k)})$ residues $\beta$ such that $\beta^{k}\equiv \alpha$ (mod $d$). This can be shown using the Chinese remainder theorem and the fact that $(\bZ/p^{n}\bZ)^{\times}$ is cyclic for odd $p$. So for each choice of $x$, there are $O(\tau(d)^{\lambda(k)})$ possibilities for $y$ (mod $d$), and hence $O(\tau(d)^{\lambda(k)}(1+Bd^{-1}))$ possible values for $y$. Summing over $x$ gives an upper bound of
\[B\tau(d)^{\lambda(k)}+B^{2}\tau(d)^{\lambda(k)}d^{-1}+R,\]
where $R$ is the number of solutions where $x,d$ are not coprime. For such a solution, there is a prime $p$ such that $p|d$, $p|x$. Then also $p|y$ and if we let $x'=x/p$, $y'=y/p$, $d'=d/p$ and $B'=B/p$ then
\[(x')^{k}\equiv (y')^{k}(\text{mod }d').\]
If $(x';d')>1$, then we must also have $(y';d')>1$, and as $x,y$ are semiprimes this means that $x|d$ and $y|d$. There are at most $\omega(d)^{4}\ll (\log d)^{4}$ possibilities for $x,y$ in this case. Without loss of generality, $d\le B^{k}$ because otherwise $x^{k}\equiv y^{k}$ (mod $d$)$\implies x=y$. So $(\log d)^{4}\ll B$. If $(x';d')=1$ then by the same argument as above there are $O(\tau(d')^{\lambda(k)}(B'+(B')^{2}(d')^{-1}))$ possible values for $x',y'$. Summing over all $p$ gives:
\[\sum_{p|d} \tau(d/p)^{\lambda(k)}\left(\frac{B}{p}+\frac{B^{2}}{p^{2}}\left(\frac{d}{p}\right)^{-1}\right)\ll \tau(d)^{\lambda(k)}\left(B+B^{2}d^{-1}\right)\sum_{p|d} \frac{1}{p}.\]
If we let $p_{k}$ be the $k$th prime. Then, noting that $\omega(d)\ll \log d$, we see that
\begin{equation}\label{reciprocal_div_bound}\sum_{p\mid d} \frac{1}{p}\le \sum_{k=1}^{\omega(n)}\frac{1}{p_{k}}\ll \log_{(2)} \omega(n) \ll \log_{(3)} n.\end{equation}
So we have $R\ll \tau(d)^{\lambda(k)}(\log_{(3)} d)(B+B^{2}d^{-1})$.
\end{proof}
\end{lemma}
\begin{lemma}\label{divisorsumofdivisor}
Let $n\ge 3$ be an integer and $r\ge 0$ a real number. Then there exists $C_{r}>0$ such that:
\[\sum_{d\mid n}\frac{\tau(d)^{r}}{d}\ll_{r} (\log_{(2)} n)^{C_{r}}.\]
\begin{proof}
Let
\[f(n)=\sum_{d\mid n}\frac{\tau(d)^{r}}{d}.\]
Note that $f$ is multiplicative. We compute $f(p^{k})$ for $k\ge 0$:
\[f\left(p^{k}\right)=\sum_{j=0}^{k}\frac{(j+1)^{r}}{p^{j}}=1+O_{r}\left(\frac{1}{p}\right).\]
Then for some $C'_{r}>0$,
\[f(n)\le \prod_{p\mid n}\left( 1+\frac{C'_{r}}{p}\right).\]
Taking logarithms:
\[\log f(n)\le C'_{r}\sum_{p\mid n} \frac{1}{p}.\]
Applying \eqref{reciprocal_div_bound}, we get
\[\log f(n)\ll_{r} \log_{(3)} n.\]
The result follows by exponentiating.
\end{proof}
\end{lemma}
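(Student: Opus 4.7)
The plan is to exploit multiplicativity of $f(n) := \sum_{d \mid n} \tau(d)^r/d$, which is immediate since $\tau$ is multiplicative. This reduces the problem to estimating $f$ on prime powers and taking a Euler-type product. I would compute $f(p^k) = \sum_{j=0}^{k}(j+1)^r/p^j$ and observe that the tail beyond $j=0$ is bounded uniformly in $k$ by $(1/p)\sum_{i\ge 0}(i+2)^r/2^i = O_r(1/p)$ for $p \ge 2$. This gives $f(p^k) = 1 + O_r(1/p)$, and multiplying over primes dividing $n$ yields $f(n) \le \prod_{p \mid n}(1 + C'_r/p)$ for some $C'_r = C'_r(r) > 0$.

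Next, take logarithms and use $\log(1+x) \le x$ to obtain $\log f(n) \le C'_r \sum_{p \mid n} 1/p$. The key remaining step is to bound the prime-reciprocal sum from above, which I would do by noting that it is maximised (for a fixed number of prime divisors) when those primes are the smallest possible; Mertens' theorem then gives $\sum_{p \mid n} 1/p \le \sum_{k=1}^{\omega(n)} 1/p_k \ll \log\log(\omega(n)+3)$, where $p_k$ denotes the $k$th prime. Combining with the trivial bound $\omega(n) \ll \log n$ yields $\log f(n) \ll_r \log\log\log n$, and exponentiating gives $f(n) \ll_r (\log\log n)^{C_r}$ for a suitable $C_r$ depending on $r$.

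The only mildly delicate point is uniformity in $k$ of the error term $f(p^k) = 1 + O_r(1/p)$, but this follows immediately from convergence of $\sum_j (j+1)^r 2^{-j}$, since for $p\ge 2$ we can factor out a $1/p$ and compare against a geometric series. Beyond that, the argument is a routine combination of multiplicativity and Mertens-type estimates on prime reciprocals, with no substantial obstacle; the main thing to get right is keeping track of how the constant $C_r$ propagates through the exponential.
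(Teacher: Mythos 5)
Your proposal is correct and follows essentially the same route as the paper: multiplicativity of $f$, the estimate $f(p^{k})=1+O_{r}(1/p)$, taking logarithms, bounding $\sum_{p\mid n}1/p$ by the sum over the first $\omega(n)$ primes via Mertens, using $\omega(n)\ll\log n$, and exponentiating. Your extra remark justifying the uniformity in $k$ of the error term by comparison with $\sum_{j}(j+1)^{r}2^{-j}$ is a small detail the paper leaves implicit, but it changes nothing in the structure of the argument.
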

Let $\Upsilon_{t}(A,B)=\sum \prod_{j} (\log p_{j})(\log q_{j})$, where the sum ranges over all the solutions to
\begin{equation}\label{doublemain}
\sum_{j=1}^{2t} a_{j}p_{j}^{k}=\sum_{j=1}^{2t} a_{j}q_{j}^{k}=0,
\end{equation}
where $p_{j},q_{j}$ are primes with
\[0<|a_{j}|\le A,\quad p_{j}\le B,\quad q_{j}\le B.\]
(Note that the $a_{j}$ are variables in this equation, unlike in \eqref{maineq}.)
\begin{lemma}\label{meanval}
Let $k\ge 1$, $t\in \bZ$ with $2t\ge \min\{4,k+2\}$ and suppose that $A\ge B^{2k}\ge 2$. Then there exists some $K(t,k)>0$ such that
\begin{equation}\label{Upsilonbound}\Upsilon_{t}(A,B)\ll (\log B)^{K(t,k)}A^{2t-2}B^{4t-2k}+(\log B)^{K(t,k)}A^{2t-1}B^{2t-k}.\end{equation}
\begin{remark}
By considering the underlying Diophantine equation and applying \cite[Lemma 2.5]{bd2014} it follows immediately that
\begin{equation}\label{Upsilonbound2}\Upsilon_{t}(A,B)\ll A^{2t-2}B^{4t-2k+\eps}+A^{2t-1}B^{2t-k+\eps}.\end{equation}
This is only slightly worse than our result, the only difference being that the $(\log B)^{K(t,k)}$ is replaced with $B^{\eps}$. However, the major arcs we will use are smaller than those used in \cite{bd2014} and this will mean that the pointwise bound on the minor arcs will only save an arbitrary power of $\log B$, which is enough to cancel out a $(\log B)^{K(t,k)}$ factor, but not one of size $B^{\eps}$. An alternative approach could have been to enlarge the major arcs so that \eqref{Upsilonbound2} is sufficient. This could probably be done by adapting a result of Liu \cite{liu2010}, but this would likely not lead to any improvements in Theorem \ref{main2}.
\end{remark}
\begin{proof}
We begin by defining
\begin{equation}\label{Falphabetadef}F(\alpha,\beta)=\sum_{0<|a|\le A}\sum_{p\le B}\sum_{q\le B} (\log p)(\log q)e\left(a\left(\alpha p^{k}+\beta q^{k}\right)\right)\end{equation}
where $p,q$ run over primes. We can then rewrite $\Upsilon_{t}(A,B)$ as
\[\Upsilon_{t}(A,B)=\int_{0}^{1}\int_{0}^{1} F(\alpha,\beta)^{2t}d\alpha d\beta=\int_{0}^{1}\int_{0}^{1} \left| F(\alpha,\beta)\right|^{2t}d\alpha d\beta.\]
(The second part is because $F(\alpha,\beta)$ is real.) We can now split $F$ up into pieces $F_{i,j}$, ($i,j\le L=\lfloor 1+\log B/\log 2\rfloor$) where this is the same sum as in $F$, but restricted to $2^{-i}B<p\le 2^{1-i}B$, $2^{-j}B<q\le 2^{1-j}B$. Then
\[F(\alpha,\beta)=\sum_{i=1}^{L}\sum_{j=1}^{L} F_{i,j}(\alpha,\beta).\]
Applying H\"older's inequality then gives:
\[\Upsilon_{t}(A,B)\le L^{4t}\max_{i,j\le L} \int_{0}^{1}\int_{0}^{1}\left| F_{i,j}(\alpha,\beta)\right|^{2t} d\alpha d\beta.\]
Note that $L\ll \log B$ so we have
\[\Upsilon_{t}(A,B)\ll (\log B)^{4t} \max_{\substack{1\le 2X\le B\\1\le 2Y\le B}} \Psi(A,X,Y)\]
where $\Psi(A,X,Y)=\sum\prod_{j}(\log p_{j})(\log q_{j})$ and the sum is over solutions to equation \eqref{doublemain} with
\begin{equation}\label{psiconditions}
0<|a_{j}|\le A,\quad X<p_{j}\le 2X,\quad Y<q_{j}\le 2Y
\end{equation}
As in \cite[Lemma 2.5]{bd2014} we split $\Psi$ into two pieces: $\Psi'$ which consists of the solutions in which $(p_{1}^{k},\dots,p_{2t}^{k})$ and $(q_{1}^{k},\dots,q_{2t}^{k})$ are linearly independent; and $\Psi''$ which consists of the linearly dependent solutions. Clearly:
\[\Psi(A,X,Y)=\Psi'(A,X,Y)+\Psi''(A,X,Y).\]
However, $\Psi''$ will be a lot easier to bound than in the case considered in \cite{bd2014}. We claim that if $(p_{1}^{k},\dots,p_{2t}^{k})$ and $(q_{1}^{k},\dots,q_{2t}^{k})$ are linearly dependent then either they are equal, or $p_{1}=p_{2}=\dots=p_{2t}$ and $q_{1}=q_{2}=\dots=q_{2t}$. In the latter case there are only $O((B/\log B)^{2})$ possibilities for $p_{j},q_{j}$, and $a_{1}+\cdots+a_{2t}=0$ so there are $O(A^{2t-1})$ possibilities for $a_{j}$. Hence these terms contribute $O(A^{2t-1}B^{2})$, which we will see is negligible compared to the solutions with $(p_{1},\dots,p_{2t})=(q_{1},\dots,q_{2t})$. But first we need to prove the claim.

Suppose $(p_{1}^{k},\dots,p_{2t}^{k})=\lambda (q_{1}^{k},\dots,q_{2t}^{k})$, $\lambda\in\bQ$. Then for all $i,j$, $p_{i}q_{j}=p_{j}q_{i}$. Suppose that $p_{i}\ne q_{i}$ for some $i$, then for all $j$, we must have, by unique prime factorisation, $p_{i}=p_{j}$ and $q_{i}=q_{j}$. Hence the claim follows.

Now consider the case $\bp=\bq$. For each $\bp=(p_{1},\dots,p_{2t})$ with $p_{j}\le B$, the set of $\ba$ such that $a_{1}p_{1}^{k}+\cdots+a_{2t}p_{2t}^{k}=0$ is given by the dual lattice $\Lambda^{*}$ of the lattice $\Lambda$ spanned by the vector $(p_{1}^{k},\dots,p_{2t}^{k})$. We may suppose that the $p_{j}$ are not all the same, so that $G(\Lambda)=(p_{1}^{k};\dots;p_{2t}^{k})=1$, and so $\det \Lambda^{*}=\det \Lambda=(p_{1}^{2k}+\cdots+p_{2t}^{2k})^{1/2}$. Hence, the assumption $A\ge B^{2k}$ implies that $A\gg \det \Lambda$, so by Lemma \ref{latticebound}, the number of $|\ba|\le A$ lying in $\Lambda^{*}$ is $O(A^{2t-1}/\det \Lambda)=O(A^{2t-1}/|\bp|^{k})$. Summing over $\bp$ gives
\begin{align*}
\Psi''(A,X,Y)&\ll A^{2t-1}B^{2}+A^{2t-1}\sum_{|\bp| \le B} \frac{1}{|\bp|^{k}}\prod_{j} (\log p_{j})^{2}\\ &\ll A^{2t-1}B^{2}+A^{2t-1}(\log B)^{4t}\sum_{p\le B} \frac{1}{p^{k}} \left( \sum_{p'\le p} 1 \right)^{2t-1}\\ &\ll A^{2t-1}B^{2}+(\log B)^{4t}A^{2t-1} B^{2t-k}.\end{align*}
The assumption $2t\ge k+2$ then implies that this is $O(A^{2t-1}B^{2t-k}(\log B)^{4t})$.

Now for $\Psi'(A,X,Y)$: suppose we have linearly independent $\bp=(p_{1},\dots,p_{2t})$, $\bq=(q_{1},\dots,q_{2t})$ and let $\Delta_{i,j}=p_{i}^{k}q_{j}^{k}-p_{j}^{k}q_{i}^{k}$, $D=\gcd_{1\le i<j\le 2t} \Delta_{i,j}$. The set of $\ba$ such that \eqref{doublemain} is satisfied now forms the dual lattice of the lattice spanned by $(p_{1}^{k},\dots,p_{2t}^{k})$ and $(q_{1}^{k},\dots,q_{2t}^{k})$. This has rank $2t-2$. By \cite[Lemma 2.1 and equation (2.3)]{bd2014}, the determinant is:
\[D^{-1}\left( \sum_{1\le i<j\le 2t} |\Delta_{i,j}|^{2}\right)^{\frac{1}{2}}.\]
The constraints \eqref{psiconditions} imply that $|\Delta_{i,j}|\le 2(4XY)^{k}\le 2B^{2k}$. This means the determinant is $O(A)$ and so by \ref{latticebound}, we get the bound
\[\Psi'(A,X,Y)\ll (\log B)^{4t}A^{2t-2} \sum_{\substack{\bp, \bq \\ (\ast)}} \frac{D}{\max |\Delta_{i,j}|},\]
where ($\ast$) is shorthand for the conditions $X<p_{j}\le 2X$ and $Y<q_{j}\le 2Y$ for all $j$. By symmetry we may suppose $\Delta_{1,2}=\max|\Delta_{i,j}|\ge 1$.

For given $p_{1},p_{2},q_{1},q_{2},D$, let $\Omega(p_{1},p_{2},q_{1},q_{2},D)$ denote the number of possible values for $p_{3},\dots,p_{2t}$, $q_{3},\dots,q_{2t}$ such that $\bp,\bq$ satisfy ($\ast$) together with $\max|\Delta_{i,j}|\le \Delta_{1,2}$ and $D|\Delta_{i,j}$ for all $i,j$. If we sum over the possible values of $p_{1},p_{2},q_{1},q_{2}$ then we get
\[\Psi'(A,X,Y)\ll (\log B)^{4t}A^{2t-2} \sum_{\substack{p_{1},p_{2},q_{1},q_{2}\\ \Delta_{1,2}\ge 1\\ (\ast)}} \sum_{D|\Delta_{1,2}} \frac{D}{\Delta_{1,2}}\Omega(p_{1},p_{2},q_{1},q_{2},D).\]
Now we bound $\Omega$ by discarding most of the conditions, keeping only ($\ast$) and $D|\Delta_{2l-1,2l}$ for $2\le l\le t$. Let $H(X,Y,D)$ be the number of $p_{3},p_{4},q_{3},q_{4}$ satisfying ($\ast$) and $D|\Delta_{3,4}=(p_{3}q_{4})^{k}-(p_{4}q_{3})^{k}$. Then:
\[\Omega(p_{1},p_{2},q_{1},q_{2},D)\le H(X,Y,D)^{t-1}.\]
Lemma \ref{semiprimepower}, together with $\log D\ll \log B$, supplies the bound
\[H(X,Y,D)\le XY\tau(D)^{\lambda(k)}\log_{(3)} B+(XY)^{2}\tau(D)^{\lambda(k)}D^{-1}\log_{(3)} B.\]
So
\begin{align*}
\Psi'(A,X,Y)\ll (\log B)^{4t+\eps}A^{2t-2}\sum_{\substack{p_{1},p_{2},q_{1},q_{2}\\ \Delta_{1,2}\ge 1\\ (\ast)}} \sum_{D|\Delta_{1,2}} &\frac{D}{\Delta_{1,2}}\tau(D)^{(t-1)\lambda(k)}\\&\left((XY)^{t-1}+(XY)^{2t-2}D^{-t+1}\right).
\end{align*}

Now, suppose $k\ge 3$. Then $2t\ge k+2\ge 5$, but since $2t$ is even we in fact have $2t\ge 6$, so $-t+2\le -1$, and
\begin{multline}\label{miscpsi}
\Psi'(A,X,Y)\ll\\ (\log B)^{4t+\eps}A^{2t-2}\sum_{\substack{p_{1},p_{2},q_{1},q_{2}\\ \Delta_{1,2}\ge 1 \\ (\ast)}} \left((XY)^{t-1+\eps}+(XY)^{2t-2}\frac{1}{\Delta_{1,2}}\sum_{D|\Delta_{1,2}}\tau(D)^{(t-1)\lambda(k)}D^{-1}\right).
\end{multline}
By Lemma \ref{divisorsumofdivisor},
\[\sum_{D\mid \Delta_{1,2}}\tau(D)^{(t-1)\lambda(k)}D^{-1}\ll (\log_{(2)}\Delta_{1,2})^{O(1)}\ll (\log B)^{\eps}.\]
Hence:
\[\Psi'(A,X,Y)\ll A^{2t-2}B^{2t+2+\eps}+(\log B)^{4t+\eps}A^{2t-2}(XY)^{2t-2}\Phi(X,Y)\]
where
\[\Phi(X,Y)=\sum_{\substack{p_{1},p_{2},q_{1},q_{2}\\ \Delta_{1,2}\ge 1}} \Delta_{1,2}^{-1}.\]
Writing $x=p_{1}q_{2}$, $y=p_{2}q_{1}$, so that $\Delta_{1,2}=x^{k}-y^{k}\ge 1$. The conditions on $p_{i},q_{i}$ imply $XY\le y<x\le 4XY$, and each $x,y$ in this range corresponds to at most $4$ possible values for $p_{1},p_{2},q_{1},q_{2}$, so
\begin{align*}\Phi(X,Y)&\ll \sum_{XY\le y<x\le 4XY} \frac{1}{x^{k}-y^{k}}\\&\ll \sum_{XY\le y<x\le 4XY} \frac{1}{(x-y)(XY)^{k-1}}\ll (XY)^{2-k}\log B.\end{align*}
The result then follows in the $k\ge 3$ case by noting that $A\ge B^{2k}$ implies $A^{2t-2}B^{2t+2+\eps}\le A^{2t-1}B^{2t-k}$.

Now consider the case $k=2$ and $2t\ge 4$. This is similar but we have $\lambda(2)=1$ and $2-t\le 0$ so $\sum_{D\mid \Delta_{1,2}}\tau(D)^{t-1}D^{2-t}\le \tau(\Delta_{1,2})^{t}$. Instead of \eqref{miscpsi} we have 
\begin{align*}
\Psi'(A,X,Y)\ll (\log B)^{4t+\eps}A^{2t-2}\sum_{\substack{p_{1},p_{2},q_{1},q_{2}\\ \Delta_{1,2}\ge 1 \\ (\ast)}} \tau(\Delta_{1,2})^{t}\left((XY)^{t-1}+(XY)^{2t-2}\Delta_{1,2}^{-1}\right).
\end{align*}
The problem is then reduced to bounding
\[\Phi'(X,Y)=\sum_{\substack{p_{1},p_{2},q_{1},q_{2}\\ \Delta_{1,2}\ge 1}} \tau(\Delta_{1,2})^{t}\Delta_{1,2}^{-1}\ll \sum_{XY\le y<x\le 4XY} \frac{\tau(x^{2}-y^{2})^{t}}{x^{2}-y^{2}}\]
and
\[\Phi''(X,Y)=\sum_{\substack{p_{1},p_{2},q_{1},q_{2}\\ \Delta_{1,2}\ge 1}} \tau(\Delta_{1,2})^{t}\ll \sum_{XY\le y<x\le 4XY} \tau(x^{2}-y^{2})^{t}.\]
We  substitute $u=x-y$, $v=x+y$ and use the inequality $\tau(ab)\le\tau(a)\tau(b)$ as well as the fact that $\sum_{n\le N} \tau(n)^{t}/n \ll (\log N)^{2^{t}}$  (\cite[Lemma 2.4]{hua}) to get
\[\Phi'(X,Y)\ll \sum_{1\le u\le 3XY} \frac{\tau(u)^{t}}{u} \sum_{2XY\le v\le 8XY} \frac{\tau(v)^{t}}{v}\ll (\log B)^{2^{t+1}}.\]
Similarly we have (using \cite[Lemma 2.5]{hua} this time):
\[\Phi''(X,Y)\ll (XY)^{2}(\log B)^{2^{t+1}-2}\]
and
\begin{align*}\Psi'(A,X,Y)&\ll (\log B)^{K(t,2)}A^{2t-2}\left((XY)^{t+1}+(XY)^{2t-2}\right)\\&\le (\log B)^{K(t,2)}A^{2t-2}\left(B^{2t+k}+B^{4t-2k}\right).\end{align*}
To conclude, note that $A\ge B^{2k}$ implies $A^{2t-2}B^{2t+k}\le A^{2t-1}B^{2t-k}$.
\end{proof}
\end{lemma}

\section{The major and minor arcs}\label{arcsection}
\subsection{The major and minor arc decomposition}
The major arcs we will need are
\[\fM(q,r)=\left\{ \alpha :\Vert\alpha-r/q\Vert\le \frac{Q}{AB^{k}}\right\}\cap [0,1]\]
where $Q=(\log B)^{\sigma}$ for some constant $\sigma>0$ to be chosen later and $\Vert \cdot\Vert$ denotes the distance to the nearest integer. When $q\le Q$, $(r;q)=1$, $0\le r<q$, and $B$ is sufficiently large, these will be disjoint intervals of length $2Q/AB^{k}$, except for $\fM(1,0)$ which is a union of two intervals. Then we let
\[\fM=\bigcup_{q\le Q} \bigcup_{0\le r< q} \fM(q,r).\]
The minor arcs are then $\fm=[0,1]\setminus \fM$.
We then let, for $\fA\in \{\fM,\fm\}$:
\begin{equation}\label{rhoAdef}
\rho_{\ba}(B,\fA)=\int_{\fA} g_{\ba}(\alpha)d\alpha,
\end{equation}
and
\begin{equation}\label{rhotildeAdef}
\tilde{\rho}_{\ba}(B,\fA)=\int_{\fA} \tilde{g}_{\ba}(\alpha)d\alpha.
\end{equation}
\subsection{The major arcs}
We wish to approximate $\rho(B,\fM)$ and $\tilde{\rho}(B,\fM)$. The main result will be:
\begin{lemma}\label{majorarcs}
Suppose $s\ge \max\{5,k+1\}$ and $\log A\asymp \log B$. For each $\gamma>0$, and $\sigma>0$ sufficiently large in terms of $\gamma$ we have
\[\sum_{|\ba|\le A}\left| \rho_{\ba}(B,\fM)-\fS_{\ba}J_{\ba}B^{s-k}\right|^{2}\ll \frac{A^{s-2}B^{2s-2k}}{(\log B)^{\gamma}}.\]
Furthermore, if $\psi\ge 3\sigma+\gamma/2$ then
\[\sum_{|\ba|\le A}\left| \tilde{\rho}_{\ba}(B,\fM)-\fS_{\ba}J_{\ba}B^{s-k}\right|^{2}\ll \frac{A^{s-2}B^{2s-2k}}{(\log B)^{\gamma}}.\]
\end{lemma}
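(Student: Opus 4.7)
\medskip

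\noindent\textbf{Proof proposal.}

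My approach is a standard major-arc analysis, adapted to track the mean square over $\ba$. The essential observation is that the expected main term $\fS_\ba J_\ba B^{s-k}$ already has $L^2$-size $\asymp A^{s-2}B^{2s-2k}$ over $|\ba|\le A$ (coming from $|J_\ba|\ll|\ba|^{-1}$ by Theorem~\ref{main3} together with $\sum_{|\ba|\le A}|\ba|^{-2}\asymp A^{s-2}$), so the proof must produce every error term carrying a matching $|\ba|^{-1}$ decay factor.

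\textit{Step 1 (Siegel--Walfisz on each arc).} On $\fM(q,r)$ with $\alpha=r/q+\beta$ and $|\beta|\le Q/(AB^k)$, I split $g(a_j\alpha)$ according to the residue of $p$ modulo $q$. Applying Siegel--Walfisz uniformly over $q\le Q=(\log B)^\sigma$, together with partial summation to absorb the oscillation $e(a_j\beta t^k)$ (noting that $|a_j\beta|\le Q/B^k$), I obtain
\[g(a_j\alpha)=\varphi(q)^{-1}W(q,a_jr)\,v^\ast(a_j\beta)+O_\sigma\!\bigl(QB\exp(-c\sqrt{\log B})\bigr),\]
where $v^\ast(\xi)=\int_0^B e(\xi t^k)\,dt$.

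\textit{Step 2 (Assembly and Siegel--Walfisz error).} Expanding $\prod_j g(a_j\alpha)$, the dominant contribution is $\varphi(q)^{-s}\prod_j W(q,a_jr)\prod_j v^\ast(a_j\beta)$, and the full set of cross terms is bounded pointwise by $sB^sQ\exp(-c\sqrt{\log B})$ on the arc. Integrating over $\fM(q,r)$, summing over reduced $r$ and over $q\le Q$, and substituting $\gamma=\beta B^k$, I reach
\[\rho_\ba(B,\fM)=\fS_\ba^{\le Q}\,B^{s-k}\!\int_{|\gamma|\le Q/A}\!v_\ba(\gamma)\,d\gamma+E_1(\ba),\]
with $\fS_\ba^{\le Q}=\sum_{q\le Q}T_\ba(q)$ and $|E_1(\ba)|\ll A^{-1}B^{s-k}Q^c\exp(-c'\sqrt{\log B})$. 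Squaring and summing over $|\ba|\le A$ gives a contribution of size $A^{s-2}B^{2s-2k}\exp(-c''\sqrt{\log B})$, which is negligible.

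\textit{Step 3 (Completion to the full singular series and integral).} The remaining discrepancy is $\fS_\ba^{\le Q}(\int_{|\gamma|\le Q/A}v_\ba-J_\ba)+(\fS_\ba^{\le Q}-\fS_\ba)J_\ba$, multiplied by $B^{s-k}$. For the singular-integral tail, the decay $|v(\xi)|\ll\min(1,|\xi|^{-1/k})$ and $s>k$ (ensured by $\hat s\ge 3k$) give rapid convergence. For the singular-series tail, Lemma~\ref{singconv} supplies $|T_\ba(q)|\ll q^{1-s/2+\eps}\prod_j(a_j;q)^{1/2}$. I then open squares in expressions like $\sum_{|\ba|\le A}|\fS_\ba^{\le Q}-\fS_\ba|^2|J_\ba|^2$, apply $|J_\ba|\ll|\ba|^{-1}$, swap orders of summation, and reduce to bounding arithmetic sums of the shape $\sum_{|\ba|\le A}|\ba|^{-2}\prod_j(a_j;q_1)(a_j;q_2)$, handled via elementary divisor estimates such as $\sum_{|a|\le A}(a;q)\ll A\tau(q)$. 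Reserving two coordinates of $\ba$ to carry the $|\ba|^{-2}$ weight is possible since $\hat s\ge 3k\ge 6$ forces $s\ge 7$. Choosing $\sigma=\sigma(\gamma)$ sufficiently large then absorbs all losses of the form $q^\eps$ and yields the desired $A^{s-2}B^{2s-2k}(\log B)^{-\gamma}$.

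\textit{Step 4 (The $\tilde\rho$ variant).} Siegel--Walfisz applies equally well to primes in $(B(\log B)^{-\psi},B]$, yielding the analogue of Step~1 with $\tilde v^\ast(\xi)=\int_{B(\log B)^{-\psi}}^B e(\xi t^k)\,dt$ in place of $v^\ast$. Since $\|v^\ast-\tilde v^\ast\|_\infty\ll B(\log B)^{-\psi}$, the additional per-$\ba$ error is $\ll A^{-1}B^{s-k}(\log B)^{c\sigma-\psi}$, which is acceptable as soon as $\psi\ge 3\sigma+\gamma/2$.

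\textit{Main obstacle.} The delicate point is Step~3. A naive pointwise bound $|\text{error}(\ba)|\ll B^{s-k}(\log B)^{-\gamma/2}$ would only produce $A^sB^{2s-2k}(\log B)^{-\gamma}$, a factor of $A^2$ too large. The required $|\ba|^{-2}$ weight must be extracted at every step, either from $|J_\ba|\ll|\ba|^{-1}$ (for the singular-integral tail) or from the multiplicative arithmetic of $\prod_j(a_j;q_1)(a_j;q_2)$ after opening the square (for the singular-series tail). Threading this bookkeeping consistently through both truncations, with $\sigma$ calibrated against $\gamma$, is where the main technical effort lies.
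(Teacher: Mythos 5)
Your architecture is the same as the paper's: Siegel--Walfisz approximation of $g(a_j\alpha)$ on each arc, assembly into (truncated singular series)$\times$(truncated singular integral)$\times B^{s-k}$, an exponentially small Siegel--Walfisz error, and then an average over $\ba$ of the two truncation errors using $T_{\ba}(q)\ll q^{1-s/2+\eps}\prod_j(a_j;q)^{1/2}$ from Lemma \ref{singconv} together with elementary gcd sums; Steps 1, 2 and 4 (including the threshold $\psi\ge 3\sigma+\gamma/2$) match the paper. However, Step 3 — which you yourself identify as the crux — rests on the claim $|J_{\ba}|\ll|\ba|^{-1}$ ``by Theorem \ref{main3}'', and this is wrong on two counts: Theorem \ref{main3} gives only the \emph{lower} bound $J_{\ba}\gg|\ba|^{-1}$, and the corresponding upper bound is false. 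For instance, for $\ba=(-N,1,\dots,1)$ the constraint only forces $x_1\ll N^{-1/k}$, and one finds $J_{\ba}\asymp N^{-1/k}$, which for $k\ge 2$ is far larger than $|\ba|^{-1}=N^{-1}$. Hence your plan of extracting the needed $A^{-2}$ saving by ``reserving two coordinates to carry the $|\ba|^{-2}$ weight'' in $\sum_{|\ba|\le A}|\fS_{\ba}-\fS_{\ba}^{\le Q}|^2|J_{\ba}|^2$ does not go through as stated; moreover, for the singular-integral tail the weight cannot come from $J_{\ba}$ at all (in your decomposition that term is multiplied by $\fS_{\ba}^{\le Q}$, not $J_{\ba}$), and it is not ``rapidly convergent'': the tail of the $\gamma$-integral beyond $Q/A$ carries the positive power $(A/Q)^{(s-k)/k}$.

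The repair is precisely the paper's bookkeeping, which stays within your framework: by H\"older and $|v(\xi)|\ll\min(1,|\xi|^{-1/k})$ one has the valid bounds $|J_{\ba}|,\;|J_{\ba}(R)|\ll|a_1\cdots a_s|^{-1/s}$ and $|J_{\ba}-J_{\ba}(Q/A)|\ll|a_1\cdots a_s|^{-1/k}(A/Q)^{(s-k)/k}$, i.e.\ the decay must be distributed over \emph{all} $s$ coordinates rather than concentrated on two. The decisive $A^{s-2}$ then comes from estimates of the shape \eqref{majorarcsclaim}--\eqref{majorarcsclaim2}, namely $\sum_{|\ba|\le A}\fS_{\ba}(R)^2|a_1\cdots a_s|^{-\tau}\ll A^{s(1-\tau)}R^{\eps-(s-4)}$ with $\tau=2/s$ for the series tail and $\tau=2/k$ for the integral tail (with the $\tau=1$, $\log A$ variant when $k=2$), after which the factor $(A/Q)^{2(s-k)/k}$ is exactly compensated and the choice of $\sigma=\sigma(\gamma)$ delivers $(\log B)^{-\gamma}$. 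So the gap is not in the route but in the one inequality you lean on; as written, Step 3 would fail, and fixing it forces the evenly-spread weights above.
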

To prove this we will need the Siegel-Walfisz theorem. Define
\[\Lambda(n)=\begin{cases} \log p &\text{if }n=p^{k}\text{ for some $p$ and $k>0$}
\\ 0 &\text{otherwise}\end{cases},\]
\[\psi_{q,r}(x)=\sum_{\substack{n\le x\\ n\equiv r \text{ (mod }q\text{)}}} \Lambda(n),\]
and
\[\theta_{q,r}(x)=\sum_{\substack{p\le x\\ p\equiv r \text{ (mod }q\text{)}}} \log p.\]
\begin{thm}[Siegel-Walfisz]\label{siegelwalfisz}
Let $N>0$. Then there exists a constant $c_{N}$ such that for all $x>1$ and all positive integers $q,a$ such that $(a;q)=1$ and $q\le (\log x)^{N}$, we have
\[\psi_{q,a}(x)=\frac{x}{\varphi(q)}+O\left(x\exp(-c_{N}\sqrt{\log x})\right).\]
\begin{proof}
See \cite[Chapter 22]{davenport}.
\end{proof}
\end{thm}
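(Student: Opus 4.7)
The plan is to follow the standard derivation of Siegel--Walfisz, in essentially the form presented by Davenport. First I would pass from counting primes in arithmetic progressions to twisted sums using the orthogonality of Dirichlet characters. Writing
\[\psi_{q,a}(x)=\frac{1}{\varphi(q)}\sum_{\chi\, (\mathrm{mod}\,q)} \overline{\chi}(a)\,\psi(x,\chi),\qquad \psi(x,\chi)=\sum_{n\le x}\chi(n)\Lambda(n),\]
the contribution of the principal character $\chi_{0}$ gives $\psi(x)+O((\log q)(\log x))$, and by the classical prime number theorem $\psi(x)=x+O(x\exp(-c\sqrt{\log x}))$. Dividing by $\varphi(q)$ produces the claimed main term $x/\varphi(q)$ together with an admissible error, so the whole problem is reduced to showing that each non-principal $\psi(x,\chi)$ is bounded by $O(x\exp(-c_{N}\sqrt{\log x}))$ uniformly for $q\le(\log x)^{N}$.

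Next I would invoke the explicit formula relating $\psi(x,\chi)$ to the non-trivial zeros of $L(s,\chi)$. Truncated at height $T$ it gives
\[\psi(x,\chi)=-\sum_{|\mathrm{Im}\rho|\le T}\frac{x^{\rho}}{\rho}+O\!\left(\frac{x(\log(qxT))^{2}}{T}\right),\]
so the estimate is reduced to a bound on $x^{\mathrm{Re}\rho}$ for zeros of $L(s,\chi)$. Here I would quote the standard zero-free region for Dirichlet $L$-functions: there is an absolute constant $c_{0}>0$ such that $L(s,\chi)$ has no zero in the region $\mathrm{Re}(s)\ge 1-c_{0}/\log(q(|t|+2))$, with the possible exception of a single real simple zero $\beta_{\chi}$ for at most one real character modulo $q$. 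Choosing $T=\exp(\sqrt{\log x})$ and using $q\le (\log x)^{N}$, all other zeros satisfy $\mathrm{Re}\rho\le 1-c'/\sqrt{\log x}$, which yields the desired bound for every $\chi$ other than those having a Siegel zero.

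The main obstacle, as is classical, is handling the possible exceptional Siegel zero $\beta_{\chi}$ attached to a real primitive character. This is where Siegel's theorem enters: for every $\varepsilon>0$ there exists an (ineffective) constant $C(\varepsilon)>0$ such that any such $\beta_{\chi}$ satisfies $1-\beta_{\chi}\ge C(\varepsilon)q^{-\varepsilon}$. Choosing $\varepsilon$ small (say $\varepsilon=1/(4N)$) and combining with $q\le(\log x)^{N}$, one checks that $x^{\beta_{\chi}}\le x\exp(-c_{N}\sqrt{\log x})$, so this exceptional contribution is absorbed into the same error term. It is precisely the ineffectivity of Siegel's theorem that makes the constant $c_{N}$ ineffective in terms of $N$.

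Summing the bounds over the at most $\varphi(q)\le q$ characters modulo $q$ and dividing by $\varphi(q)$ in the character expansion completes the proof. Finally, passing from $\psi_{q,a}$ to $\theta_{q,a}$ (if needed) only costs the prime-power contribution $\sum_{k\ge 2}\sum_{p^{k}\le x}\log p\ll x^{1/2}\log x$, which is negligible compared with the Siegel--Walfisz error and hence does not affect any of the subsequent applications.
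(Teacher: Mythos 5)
Your proposal is correct and is precisely the classical argument (orthogonality of characters, explicit formula, zero-free region, Siegel's theorem for the exceptional zero) that the paper invokes by citing Davenport, Chapter 22, so it matches the paper's approach. The only remark is that the ineffectivity of $c_{N}$ you note is inherited harmlessly by the applications in the paper, since only the existence of such a constant is used.
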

What we will actually need is something which looks more general, but is actually just an easy corollary of Theorem \ref{siegelwalfisz}.
\begin{cor}\label{swcor}
Let $N>0$. Then there exists a constant $c_{N}$ such that for all $1<y\le x$ and all positive integers $q,a$ such that $(a;q)=1$ and $q\le (\log x)^{N}$, we have
\[\theta_{q,a}(y)=\frac{y}{\varphi(q)}+O\left(x\exp(-c_{N}\sqrt{\log x})\right).\]
\begin{proof}
First, it can easily be shown that
\[\theta_{q,a}(y)-\psi_{q,a}(y)\ll y^{1/2}\log y\ll x^{1/2+\eps}.\]
Hence if $q\le (\log y)^{2N}$ then the result follows by applying Siegel-Walfisz with $c'_{N}=c_{2N}$.

So we may suppose $q>(\log y)^{2N}$. In this case, $(\log y)^{2N}<(\log x)^{N}$ which implies
\[y\le \exp\left(\sqrt{\log x}\right)\ll x\exp\left(-c'_{N}\sqrt{\log x}\right).\]
Now simply note that $y/\varphi(q)\le y$ and $\theta_{q,a}(y)\le \theta(y)\ll y$.
\end{proof}
\end{cor}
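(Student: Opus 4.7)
The plan is to deduce this directly from the standard Siegel--Walfisz theorem \ref{siegelwalfisz}. The only real obstruction is that Theorem \ref{siegelwalfisz} constrains $q$ in terms of $\log y$, whereas we are given the weaker hypothesis $q \le (\log x)^{N}$, which could be much weaker if $y$ is small compared to $x$. I would handle this by a simple case split on the size of $q$ relative to $\log y$, together with an easy passage from $\psi$ to $\theta$.

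First, replace $\theta_{q,a}(y)$ by $\psi_{q,a}(y)$: the difference only counts proper prime powers $p^{m}$ with $m \ge 2$ lying in the progression $a \pmod q$, so it is trivially bounded by $O(y^{1/2}\log y) = O(x^{1/2+\eps})$, which is comfortably dominated by the target error. Next, if $q \le (\log y)^{2N}$, applying Theorem \ref{siegelwalfisz} with parameter $2N$ gives
\[\psi_{q,a}(y) = \frac{y}{\varphi(q)} + O\!\left(y\exp(-c_{2N}\sqrt{\log y})\right).\]
To convert this into a bound involving $\log x$ rather than $\log y$, I would split once more according to whether $\log y \ge (\log x)/4$ or not. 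In the former range, $\sqrt{\log y} \ge \tfrac12\sqrt{\log x}$, and the error is bounded by $x\exp(-\tfrac12 c_{2N}\sqrt{\log x})$; in the latter range, $y \le x^{1/4}$, so $\psi_{q,a}(y)$ and $y/\varphi(q)$ are each at most $x^{1/4}$, which is in turn smaller than $x\exp(-c_{N}\sqrt{\log x})$ as soon as $x$ is sufficiently large.

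In the remaining case $q > (\log y)^{2N}$, the combined constraint $(\log y)^{2N} < q \le (\log x)^{N}$ forces $\log y < (\log x)^{1/2}$, so $y \le \exp(\sqrt{\log x})$. But then $\theta_{q,a}(y) \le \theta(y) \ll y$ and $y/\varphi(q) \le y$ are both $\ll \exp(\sqrt{\log x})$, which is tiny compared to $x\exp(-c_{N}\sqrt{\log x}) = \exp(\log x - c_{N}\sqrt{\log x})$ for large $x$. I do not expect any genuine obstacle here; the only care required is the bookkeeping in matching $\log y$ to $\log x$ inside the Siegel--Walfisz error, and the constant $c_{N}$ ends up being a small multiple of $c_{2N}$ from Theorem \ref{siegelwalfisz}.
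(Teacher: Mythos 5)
Your proposal is correct and follows essentially the same route as the paper: pass from $\theta_{q,a}$ to $\psi_{q,a}$ with an $O(y^{1/2}\log y)$ error, apply Theorem \ref{siegelwalfisz} with parameter $2N$ when $q\le(\log y)^{2N}$, and otherwise note that $q>(\log y)^{2N}$ together with $q\le(\log x)^{N}$ forces $y\le\exp(\sqrt{\log x})$, so the trivial bounds suffice. Your additional sub-split on whether $\log y\ge(\log x)/4$ just makes explicit the conversion of the error term from $y\exp(-c_{2N}\sqrt{\log y})$ to $x\exp(-c_{N}\sqrt{\log x})$, which the paper leaves implicit; this is sound and changes nothing essential.
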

Now let
\[v(\beta,B)=\int_{0}^{B} e\left(\beta x^{k}\right) dx\]
and
\[v_{1}(\beta,B)=\sum_{1\le n\le B^{k}} \left(n^{1/k}-(n-1)^{1/k}\right)e(\beta n).\]
(Note that $v(\beta)=v(\beta,1)$.)
\begin{lemma}\label{gapprox}
For $0<|a|\le A$ and $\alpha\in \fM(q,r)$ (with $q\le Q= (\log B)^{\sigma}$) we have
\[g(a\alpha)=\varphi(q)^{-1}W(q,ar)v\left(a(\alpha-r/q),B\right)+O\left(B\exp(-c\sqrt{\log B})\right)\]
for some $c$ depending on $\sigma$, and
\[\tilde{g}(a\alpha)=\varphi(q)^{-1}W(q,ar)v\left(a(\alpha-r/q),B\right)+O\left(B(\log B)^{-\psi}\right).\]
\begin{proof}
First note that we may deduce the second formula from the first by the fact that $|g(\alpha)-\tilde{g}(\alpha)|\le B(\log B)^{-\psi}$ for all $\alpha\in\bR$.

Hence we just need to prove the first formula. The proof is similar to \cite[Lemma 7.15]{hua}. We let $\beta\equiv \alpha-r/q$ (mod $1$) with $|\beta|\le 1/2$, and
\[\nu(n)=\begin{cases} e\left(\frac{ar}{q}p^{k}\right)\log p\quad &\text{if }n=p^{k}\text{ for some prime p}\\
0&\text{otherwise}\end{cases},\]
so that
\[g(a\alpha)=\sum_{n\le B^{k}} \nu(n)e(a\beta n).\]
Let $S(X)=\sum_{n\le X} \nu(n)$ and $N=\lfloor B^{k}\rfloor$ so that by partial summation we have
\[g(a\alpha)=S(N)e\left(a\beta (N+1)\right)-\sum_{n\le N}S(n)\left(e\left(a\beta (n+1)\right)-e(a\beta n)\right).\]
By Corollary \ref{swcor} (recall that $q\le (\log B)^{\sigma}$) we have, for all $X\le N$,
\begin{align*}
S(X)&=\sum_{\substack{0\le h<q\\ (h;q)=1}} \theta_{q,h}\left(X^{1/k}\right)e\left(\frac{ar}{q}h^{k}\right)+O\left((\log q)^{2}\right)\\&=\sum_{\substack{0\le h<q\\ (h;q)=1}} \frac{X^{1/k}}{\varphi(q)}e\left(\frac{ar}{q}h^{k}\right)+O\left(B\exp(-c\sqrt{\log B})\right)\\&=X^{1/k}\varphi(q)^{-1}W(q,ar)+O\left(B\exp(-c\sqrt{\log B})\right).
\end{align*}
Note also that $e(a\beta(n+1))-e(a\beta n)\ll A|\beta|$ so
\begin{align*}
g(a\alpha)=&\frac{N^{1/k}}{\varphi(q)}W(q,ar)e\left(a\beta(N+1)\right)\\&-\sum_{n\le N}\frac{n^{1/k}}{\varphi(q)}W(q,ar)\left(e\left(a\beta (n+1)\right)-e(a\beta n)\right)\\&+O\left(\left(1+|\beta|AB^{k}\right)B\exp\left(-c\sqrt{\log B}\right)\right).
\end{align*}
Applying partial summation again in the opposite direction, and noting that $|\beta|AB^{k}\le Q\ll \exp((c/2) \sqrt{\log B})$ we get:
\[g(a\alpha)=\varphi(q)^{-1}W(q,ar)v_{1}(a\beta,B)+O\left(B\exp\left(-\frac{c}{2}\sqrt{\log B}\right)\right).\]

It remains to replace $v_{1}$ with $v$. Since $|W(q,ar)/\varphi(q)|\le 1$, it is enough to show that
\[v(a\beta,B)-v_{1}(a\beta,B)\ll B\exp\left(-\frac{c}{2}\sqrt{\log B}\right).\]
For each $n$ we have
\begin{align*}
\left(n^{1/k}-(n-1)^{1/k}\right)e(a\beta n)&=\int_{n-1}^{n} \frac{1}{k}x^{1/k-1}e(a\beta n)dx\\&=\int_{n-1}^{n} \frac{1}{k}x^{1/k-1}\left(e(a\beta x)+O(A|\beta|)\right)dx\\&=\int_{n-1}^{n} \frac{1}{k}x^{1/k-1}e(a\beta x)dx\\&+O\left(A|\beta|\left(n^{1/k}-(n-1)^{1/k}\right)\right).
\end{align*}
Hence
\begin{align*}
v_{1}(a\beta,B)&=\int_{0}^{N} \frac{1}{k}x^{1/k-1}e(a\beta x)dx+O(AB|\beta|)\\ &=\int_{0}^{B^{k}}\frac{1}{k}x^{1/k-1}e(a\beta x)dx+O\left(B^{1-k}\right)+O(AB|\beta|)\\&=v(a\beta,B)+O\left(QB^{1-k}\right).
\end{align*}
\end{proof}
\end{lemma}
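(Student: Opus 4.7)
The plan is to follow the classical major-arc analysis (cf.\ \cite[Lemma 7.15]{hua}): extract an asymptotic for a twisted prime sum via Siegel--Walfisz and convert it into the singular-integral factor by partial summation. First I would dispose of the $\tilde{g}$ bound as an immediate consequence of the $g$ bound: the difference $g(\alpha) - \tilde{g}(\alpha) = \sum_{p \le B(\log B)^{-\psi}} e(\alpha p^{k})\log p$ has absolute value at most $\sum_{p \le B(\log B)^{-\psi}}\log p \ll B(\log B)^{-\psi}$ by Chebyshev's bound, uniformly in $\alpha$, which is exactly the error term required in the second formula.

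Turning to the first formula, write $\beta = \alpha - r/q$, so $|\beta| \le Q/(AB^{k})$, and define $\nu(n) = e_{q}(arp^{k})\log p$ when $n = p^{k}$ for a prime $p$, and $\nu(n) = 0$ otherwise; then $g(a\alpha) = \sum_{n \le N}\nu(n) e(a\beta n)$ with $N = \lfloor B^{k}\rfloor$. The key preliminary step is to show
\[S(X) := \sum_{n \le X}\nu(n) = X^{1/k}\varphi(q)^{-1} W(q,ar) + O\bigl(B\exp(-c\sqrt{\log B})\bigr)\]
uniformly for $1 \le X \le N$. To do this, split primes $p \le X^{1/k}$ by residue class modulo $q$: those dividing $q$ number at most $\omega(q) \le \log q$ and contribute $O((\log q)^{2})$ in total, while for each $h$ with $(h;q)=1$ the contribution is $e_{q}(arh^{k})\theta_{q,h}(X^{1/k})$. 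Corollary \ref{swcor} (applicable since $q \le (\log B)^{\sigma}$) replaces each $\theta_{q,h}(X^{1/k})$ by $X^{1/k}/\varphi(q)$ with error $O(B\exp(-c\sqrt{\log B}))$ for some $c = c(\sigma)$; summing over $h$ inflates the error by a factor of $q \le (\log B)^{\sigma} = \exp(o(\sqrt{\log B}))$, which is absorbed after shrinking $c$.

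Next, partial summation gives
\[g(a\alpha) = S(N) e(a\beta(N+1)) - \sum_{n \le N} S(n)\bigl(e(a\beta(n+1)) - e(a\beta n)\bigr).\]
Substituting the main term $n^{1/k}\varphi(q)^{-1}W(q,ar)$ and running partial summation \emph{in reverse} (using $n^{1/k} = \sum_{m \le n}(m^{1/k}-(m-1)^{1/k})$) reassembles exactly $\varphi(q)^{-1}W(q,ar)\,v_{1}(a\beta,B)$. For the error, $|e(a\beta(n+1)) - e(a\beta n)| \ll |a\beta|$, so $\sum_{n \le N}|e(a\beta(n+1)) - e(a\beta n)| \ll NA|\beta| \le Q$; multiplied by the pointwise $S$-error $O(B\exp(-c\sqrt{\log B}))$, the total contribution is still $O(B\exp(-c'\sqrt{\log B}))$, since $Q$ is polylogarithmic in $B$.

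Finally, replace $v_{1}$ by $v$. Writing $n^{1/k}-(n-1)^{1/k} = \int_{n-1}^{n}(1/k)y^{1/k-1}\,dy$ and swapping $e(a\beta n)$ for $e(a\beta y)$ on each unit interval at cost $O(A|\beta|)$ per term gives $v_{1}(a\beta, B) = v(a\beta, N^{1/k}) + O(A|\beta|\cdot N^{1/k})$; the remaining range $[N^{1/k},B]$ has length $O(B^{1-k})$. Both corrections are $O(QB^{1-k})$, comfortably inside the target error after multiplying by $|\varphi(q)^{-1}W(q,ar)| \le 1$. The only real obstacle is careful bookkeeping: ensuring that the Siegel--Walfisz error (inflated by $q$), the partial-summation error (inflated by $Q$), and the smoothing $v_{1}\to v$ error all stay well below $B\exp(-c\sqrt{\log B})$, which works precisely because $Q = (\log B)^{\sigma}$ is polylogarithmic while $\exp(-c\sqrt{\log B})$ beats every negative power of $\log B$.
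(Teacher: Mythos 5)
Your proposal is correct and follows essentially the same route as the paper: reduce $\tilde{g}$ to $g$ via the trivial bound $|g-\tilde{g}|\ll B(\log B)^{-\psi}$, establish the asymptotic for $S(X)$ by splitting into residue classes and applying Corollary \ref{swcor}, then partial summation forwards and backwards to produce $v_{1}(a\beta,B)$, and finally the sum-to-integral comparison to replace $v_{1}$ by $v$ with error $O(QB^{1-k})$. The bookkeeping you flag (absorbing the factors $q\le Q$ and $|\beta|AB^{k}\le Q$ into the exponentially small error by shrinking $c$) is exactly how the paper handles it.
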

Now we use this to prove Lemma \ref{majorarcs}. Let $\ba\in\bZ^{s}$, $|\ba|\le A$, $R\ge 1$, and define
\[\fS_{\ba}(R)=\sum_{q>R} T_{\ba}(q)\]
and
\[J_{\ba}(R)=\int_{-R}^{R} \prod_{j}v\left(a_{j}\xi\right)d\xi,\]
where we recall that
\[T_{\ba}(q)=\varphi(q)^{-s}\sum_{\substack{r=1\\(r;q)=1}}^{q}\prod_{j}W(q,a_{j}r).\]
\begin{proof}[Proof of Lemma \ref{majorarcs}]
Suppose $\alpha\in \fM(q,r)$ and $\beta=\alpha-r/q$, then by a standard telescoping sum argument (using Lemma \ref{gapprox} and $|g(a\alpha)|\le B$), we get:
\begin{equation}\label{majorarcseq1}g_{\ba}(\alpha)=\varphi(q)^{-s}\prod_{j} W\left(q,a_{j}r\right)v\left(a_{j}\beta,B\right)+O\left(B^{s}\exp\left(-c\sqrt{\log B}\right)\right)\end{equation}
and
\begin{equation}\tilde{g}_{\ba}(\alpha)=\varphi(q)^{-s}\prod_{j} W\left(q,a_{j}r\right)v\left(a_{j}\beta,B\right)+O\left(B^{s}(\log B)^{-\psi}\right).\end{equation}
If we integrate over $\fM(q,r)$ then we get, for $g_{\ba}$:
\begin{equation}\label{majorarcseq2}\varphi(q)^{-s}\prod_{j} W\left(q,a_{j}r\right)\int_{-\frac{Q}{AB^{k}}}^{\frac{Q}{AB^{k}}} v_{\ba}(\beta,B)d\beta+O\left(A^{-1}B^{s-k}\exp\left(-\frac{c}{2}\sqrt{\log B}\right)\right)\end{equation}
and for $\tilde{g}_{\ba}$:
\begin{equation}\varphi(q)^{-s}\prod_{j} W\left(q,a_{j}r\right)\int_{-\frac{Q}{AB^{k}}}^{\frac{Q}{AB^{k}}} v_{\ba}(\beta,B)d\beta+O\left(A^{-1}B^{s-k}(\log B)^{\sigma-\psi}\right).\end{equation}
It can easily be shown, by integrating by substitution, that $v(a\beta,B)=Bv(a\beta B^{k})$. Using this and the substitution $\xi=B^{k}\beta$ gives:
\begin{align}\label{majorarcseq3} \nonumber
\int_{-\frac{Q}{AB^{k}}}^{\frac{Q}{AB^{k}}} v_{\ba}(\beta,B)d\beta=B^{s}\int_{-\frac{Q}{AB^{k}}}^{\frac{Q}{AB^{k}}} v_{\ba}(B^{k}\beta)d\beta&=B^{s-k}\int_{-Q/A}^{Q/A} v_{\ba}(\xi)d\xi\\&=B^{s-k}J_{\ba}\left(Q/A\right).
\end{align}
Let $E_{\ba}=J_{\ba}-J_{\ba}(Q/A)$. Now if we sum over $r,q\le Q$ we get:
\begin{align*}
\rho_{\ba}(B,\fM)&=\fS_{\ba}J_{\ba}B^{s-k}-\fS_{\ba}(Q)J_{\ba}(Q/A)B^{s-k}\\&-\fS_{\ba}E_{\ba}B^{s-k}+O\left(A^{-1}B^{s-k}\exp\left(-\frac{c}{4}\sqrt{\log B}\right)\right)
\end{align*}
and
\begin{align*}
\tilde{\rho}_{\ba}(B,\fM)&=\fS_{\ba}J_{\ba}B^{s-k}-\fS_{\ba}(Q)J_{\ba}(Q/A)B^{s-k}\\&-\fS_{\ba}E_{\ba}B^{s-k}+O\left(A^{-1}B^{s-k}(\log B)^{3\sigma-\psi}\right).
\end{align*}
Hence if we let
\[V_{1}=\sum_{|\ba|\le A} \left|\fS_{\ba}(Q)J_{\ba}(Q/A)B^{s-k}\right|^{2}\]
and
\[V_{2}=\sum_{|\ba|\le A} \left|\fS_{\ba}E_{\ba}B^{s-k}\right|^{2},\]
then
\[\sum_{|\ba|\le A}\left|\rho_{\ba}(B,\fM)-\fS_{\ba}J_{\ba}B^{s-k}\right|^{2}\ll V_{1}+V_{2}+A^{s-2}B^{2s-2k}\exp\left(-\frac{c}{4}\sqrt{\log B}\right)\]
and
\[\sum_{|\ba|\le A}\left|\tilde{\rho}_{\ba}(B,\fM)-\fS_{\ba}J_{\ba}B^{s-k}\right|^{2}\ll V_{1}+V_{2}+A^{s-2}B^{2s-2k}(\log A)^{6\sigma-2\psi}.\]

So the result will follow provided we can bound $V_{1}$ and $V_{2}$. By the identity $v(\beta)\ll \min(1,|\beta|^{-1/k})$ and H\"older's inequality:
\begin{align*}
|J_{\ba}(R)|&\le \left(\prod_{j}\int_{-R}^{R}\left|v\left(a_{j}\beta\right)\right|^{s}d\beta\right)^{1/s}=\left(\prod_{j}|a_{j}|^{-1}\int_{-R|a_{j}|}^{R|a_{j}|}|v(\xi)|^{s}d\xi\right)^{1/s}\\ &\ll \left|a_{1}a_{2}\cdots a_{s}\right|^{-1/s}\int_{-\infty}^{\infty} \min\left(1,|\xi|^{-s/k}\right)d\xi\ll \left|a_{1}a_{2}\cdots a_{s}\right|^{-1/s}
\end{align*}
where the integral converges because $s>k$. This implies
\[V_{1}\ll \sum_{|\ba|\le A}\left|a_{1}a_{2}\cdots a_{s}\right|^{-2/s}\fS_{\ba}(Q)^{2}B^{2s-2k}.\]
Applying a similar argument to $E_{\ba}$:
\begin{align}
\nonumber|E_{\ba}|&\le \left(\prod_{j}\int_{|\beta|>Q/A}\left|v\left(a_{j}\beta\right)\right|^{s}d\beta\right)^{1/s}=\left(\prod_{j}|a_{j}|^{-1}\int_{|\xi|>|a_{j}|Q/A}|v(\xi)|^{s}d\xi\right)^{1/s}\\ \label{majorarcseq4} &\ll\left(\prod_{j}\left|a_{j}\right|^{-1}\int_{|a_{j}|Q/A}^{\infty} \xi^{-s/k}d\xi\right)^{1/s}\ll\left|a_{1}a_{2}\cdots a_{s}\right|^{-1/k}(A/Q)^{(s-k)/k}.
\end{align}
So
\[V_{2}\ll \sum_{|\ba|\le A}\left|a_{1}a_{2}\cdots a_{s}\right|^{-2/k}\fS_{\ba}^{2}A^{2(s-k)/k}B^{2s-2k}Q^{-2(s-k)/k}.\]
Hence if we can show that, for all $s\ge 5$ and $0\le\tau< 1$:
\begin{equation}\label{majorarcsclaim}
\sum_{|\ba|\le A}\fS_{\ba}(R)^{2}\left|a_{1}a_{2}\cdots a_{s}\right|^{-\tau}\ll_{\tau} A^{s(1-\tau)}R^{\eps-(s-4)}
\end{equation}
and
\begin{equation}\label{majorarcsclaim2}
\sum_{|\ba|\le A}\fS_{\ba}(R)^{2}\left|a_{1}a_{2}\cdots a_{s}\right|^{-1}\ll (\log A)^{s}R^{\eps-(s-4)},
\end{equation}
then it will follow that,
\[V_{1}\ll A^{s-2}Q^{\eps-(s-4)}B^{2s-2k}\]
and for $k\ge 3$:
\[V_{2}\ll A^{s-2}Q^{-2(s-k)/k}B^{2s-2k}\]
and for $k=2$:
\[V_{2}\ll A^{s-2}(\log A)^{s}Q^{-2(s-k)/k}B^{2s-2k}.\]
Given that $Q=(\log B)^{\sigma}\asymp (\log A)^{\sigma}$, choosing $\sigma$ and $\psi\ge3\sigma+\gamma/2$ sufficiently large would then complete the proof.

It remains to show \eqref{majorarcsclaim} and \eqref{majorarcsclaim2}. Recall \eqref{Tabound}:
\[T_{\ba}(q)\ll q^{1-s/2+\eps}\prod_{j}(a_{j};q)^{1/2},\]
which implies, for $s\ge 5$
\[\fS_{\ba}(R)^{2}\ll \sum_{q>R}\sum_{r>R}(qr)^{1-s/2+\eps}\prod_{j}\left(qr;a_{j}\right).\]
Hence
\[\sum_{|\ba|\le A}\fS_{\ba}(R)^{2}\left|a_{1}a_{2}\cdots a_{s}\right|^{-\tau}\ll \sum_{q>R}\sum_{r>R}(qr)^{1-s/2+\eps} \sum_{|\ba|\le A}\prod_{j}\left(qr;a_{j}\right)|a_{j}|^{-\tau}.\]
By splitting up according to the possible values of $(qr;a_{j})$ we get that, when $0\le\tau<1$,
\begin{align*}\sum_{|\ba|\le A}\prod_{j}\left(qr;a_{j}\right)|a_{j}|^{-\tau}&=\left(\sum_{0<|a|\le A} (qr;a)|a|^{-\tau}\right)^{s}\le \left(\sum_{d\mid qr}\sum_{1\le |b|\le A/d}d^{1-\tau}|b|^{-\tau}\right)^{s}\\&\ll_{\tau} \left(\sum_{d\mid qr}A^{1-\tau}\right)^{s}\ll(qr)^{\eps}A^{s(1-\tau)}.\end{align*}
When $\tau=1$ it is $O((qr)^{\eps}(\log A)^{s})$ by a similar argument. We are then left with
\[\sum_{q>R}\sum_{r>R}(qr)^{1-s/2+\eps}\ll \left(R^{2-s/2+\eps}\right)^{2},\]
which gives \eqref{majorarcsclaim} and \eqref{majorarcsclaim2}.
\end{proof}
\subsection{The minor arcs}
The goal of this section is to bound $g$ pointwise on the minor arcs.
\begin{lemma}\label{weyltype1}
Let $L=\log B$ and suppose $\log A\asymp \log B$. Then for any $\sigma_{0}>0$, and $\sigma>0$ sufficiently large in terms of $\sigma_{0}$, we have that for all $\alpha\in \fm$:
\[\sum_{0<|a|\le A}|g(a\alpha)|^{2^{k-1}}\ll AB^{2^{k-1}}L^{-\sigma_{0}}\]
and (for all $\psi>0$)
\[\sum_{0<|a|\le A}|\tilde{g}(a\alpha)|^{2^{k-1}}\ll AB^{2^{k-1}}L^{-\sigma_{0}},\]
where the implied constants depend on $\sigma$, $\sigma_{0}$ and $\psi$. (Note that $\fm$ also depends on $\sigma$.)
\end{lemma}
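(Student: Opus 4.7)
The approach combines a Vinogradov--Vaughan type bound for the prime Weyl sum with an equidistribution-based counting argument exploiting $\alpha\in\fm$.

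First, I would record the relevant Vinogradov--Vaughan estimate: for every $\Theta>0$ there exists $C=C(\Theta,k)>0$ such that whenever $(r,q)=1$, $L^{C}\le q\le B^{k}L^{-C}$, and $|\beta - r/q|\le q^{-2}$, we have $|g(\beta)|\ll_{\Theta} BL^{-\Theta}$; this is Vaughan's identity combined with the Siegel--Walfisz theorem (our Theorem \ref{siegelwalfisz}), as in Vaughan's book on the Hardy--Littlewood method. The same bound holds for $\tilde g$ under the same hypotheses, with the implied constant depending additionally on $\psi$, by applying the identity to the dyadically truncated sum.

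For each $a$ with $0<|a|\le A$, apply Dirichlet's approximation theorem to $a\alpha$ with threshold $X:=B^{k}L^{-C}$, producing coprime $(r_{a},q_{a})$ with $1\le q_{a}\le X$ and $|a\alpha - r_{a}/q_{a}|\le (q_{a}X)^{-1}\le q_{a}^{-2}$. Split the $a$'s into a ``good'' set $\cG:=\{a:L^{C}\le q_{a}\le X\}$ and an ``exceptional'' set $\cE:=\{a:q_{a}<L^{C}\}$. On $\cG$, the Weyl bound above with $\Theta=\sigma_{0}/2^{k-1}$ gives $|g(a\alpha)|^{2^{k-1}}\ll B^{2^{k-1}}L^{-\sigma_{0}}$, and likewise for $\tilde g$, so the contribution of $\cG$ is at most $AB^{2^{k-1}}L^{-\sigma_{0}}$. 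Using the trivial bound $|g(a\alpha)|,|\tilde g(a\alpha)|\le B$ on $\cE$, the lemma reduces to the claim $|\cE|\ll AL^{-\sigma_{0}}$.

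For the claim, apply Dirichlet once more---this time to $\alpha$ itself with threshold $X_{0}:=AB^{k}/Q$---obtaining coprime $(r_{0},q_{0})$ with $1\le q_{0}\le X_{0}$ and $|\alpha - r_{0}/q_{0}|\le Q/(q_{0}AB^{k})$. Since $\alpha\in\fm$, one must have $q_{0}>Q=L^{\sigma}$, for otherwise $\alpha$ would lie in $\fM(q_{0},r_{0})$. For $|a|\le A$ the estimate $|a\alpha - ar_{0}/q_{0}|\le Q/(q_{0}B^{k})$ holds uniformly, so the sequence $\{a\alpha\bmod 1\}_{|a|\le A}$ is confined to narrow bands of width $O(Q/(q_{0}B^{k}))$ around the equidistributed orbit $\{ar_{0}/q_{0}\bmod 1\}$. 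A standard box-counting argument then gives, for each pair $(q,r)$ with $q\le L^{C}$ and $(r,q)=1$,
\[
\#\{|a|\le A:\Vert a\alpha - r/q\Vert\le L^{C}/(qB^{k})\}\ll\frac{AL^{C}}{qB^{k}}+\frac{A}{q_{0}}+\frac{q_{0}L^{C}}{qB^{k}}+1.
\]
Summing over the $O(L^{2C})$ such pairs and using $Q<q_{0}\le AB^{k}/Q$ leads to
\[
|\cE|\ll\frac{AL^{2C}}{B^{k}}+AL^{2C-\sigma}+L^{2C},
\]
which is $\ll AL^{-\sigma_{0}}$ once $\sigma\ge 2C+\sigma_{0}$ and $B$ is taken large enough (the hypotheses $B^{2k}\le A$ and $\log A\asymp\log B$ ensure that $A,B^{k}\ge L^{2C+\sigma_{0}}$ for $B$ large).

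\textbf{Main obstacle.} The technical heart is the equidistribution bound on $|\cE|$, which hinges on the two-sided control $Q<q_{0}\le AB^{k}/Q$ of the Dirichlet denominator of $\alpha$: the lower bound eliminates the $A/q_{0}$ term, while the upper bound tames $q_{0}L^{C}/(qB^{k})$. Both directions derive directly from $\alpha\in\fm$, and $\sigma$ then has to be chosen sufficiently large in terms of $C$ (hence $\sigma_{0}$) to convert the factor $L^{2C-\sigma}$ into the required saving $L^{-\sigma_{0}}$ despite $A$ possibly being a genuine polynomial in $B$.
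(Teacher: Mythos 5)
Your route is genuinely different from the paper's. The paper never uses any pointwise bound on $g$: it expands $\sum_{0<|a|\le A}|g(a\alpha)|^{2^{k-1}}$, applies Cauchy's inequality in the prime variables, drops primality by positivity, and thereby reduces everything to the averaged classical Weyl sum $\sum_{0<|a|\le 2A}|f(a\alpha)|^{2^{k-1}}$, which is handled by Weyl differencing in Lemma \ref{weyltype2} (the only place the minor-arc condition enters, via $Q<q_{0}\le AB^{k}/Q$ from Dirichlet applied to $\alpha$). You instead invoke a pointwise minor-arc bound for the prime Weyl sum and split the $a$'s according to the Dirichlet denominator of $a\alpha$, counting the exceptional set by the same $Q<q_{0}\le AB^{k}/Q$ device. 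This can be made to work, and your exceptional-set count is sound, but two corrections are needed on the input. First, for $k\ge 2$ the pointwise estimate is not ``Vaughan's identity plus Siegel--Walfisz as in Vaughan's book'' (that treats $k=1$); you need a Vinogradov/Hua-type estimate with only log-power loss, e.g.\ \cite[Theorem 10]{hua} or Harman's bound $g(\beta)\ll B(\log B)^{c}\bigl(q^{-1}+B^{-1/2}+qB^{-k}\bigr)^{4^{1-k}}$ under $|\beta-r/q|\le q^{-2}$; this is exactly the input the paper itself uses in the proof of Lemma \ref{inhom}, so it is available, and with it your choice $C\asymp 4^{k-1}\Theta$ and $\sigma\ge 2C+\sigma_{0}$ gives the first estimate. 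The trade-off: the paper's argument is elementary (Weyl's inequality plus a divisor estimate), while yours imports deeper machinery but yields pointwise information for each good $a$.

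The genuine gap is the one-line treatment of $\tilde g$. Writing $\tilde g(\beta)=g(\beta,B)-g(\beta,B')$ with $B'=B(\log B)^{-\psi}$, the Vinogradov/Hua estimate applied to the truncated sum only produces savings when $q\lesssim (B')^{k}(\log B)^{-c}=B^{k}L^{-k\psi-c}$, whereas your good set allows $q_{a}$ up to $B^{k}L^{-C}$. In the range $B^{k}L^{-k\psi-c}<q_{a}\le B^{k}L^{-C}$ (nonempty precisely when $C<k\psi+c$, which is the regime relevant downstream, since Lemma \ref{minorarcs} and Lemma \ref{majorarcs} choose $\psi\ge 3\sigma+\gamma/2$ \emph{after} $\sigma$) you have no bound on the $g(\cdot,B')$ piece. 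Making $C$, hence $\sigma$, large in terms of $\psi$ would repair the pointwise bound but breaks the quantifier order the lemma asserts ($\sigma$ depending only on $\sigma_{0}$) and would be circular in the applications. The gap is fixable without changing the statement: if $\psi\ge\Theta$, bound the short piece trivially by $\theta(B')\ll BL^{-\psi}\le BL^{-\Theta}$; if $\psi<\Theta$, enlarge $C$ to exceed $k\Theta+O(1)$ (still a function of $\sigma_{0}$ and $k$ only), so that the problematic range is empty. As written, however, the claim that ``the same bound holds for $\tilde g$ under the same hypotheses'' is unjustified, and this case analysis (or some substitute, such as the paper's observation that the reduction to $f$ tolerates the shorter length because $2^{k-1}\ge k$) is a missing step rather than a routine remark.
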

\begin{lemma}\label{weyltype2}
Define:
\[f(\beta)=\sum_{0<n\le B} e\left(\beta n^{k}\right).\]
For each $\sigma_{1}>0$, and for $\sigma>0$ sufficiently large in terms of $\sigma_{1}$ we have that for all $\alpha\in \fm$:
\[\sum_{0<|a|\le A}\left| f(a\alpha)\right|^{2^{k-1}}\ll \frac{AB^{2^{k-1}}}{(\log B)^{\sigma_{1}}}.\]
\begin{proof}
By Dirichlet's approximation theorem there are $r,q\in \bZ$ with
\[1\le q\le \frac{AB^{k}}{Q}=\frac{AB^{k}}{(\log B)^{\sigma}},\]
\[|q\alpha-r|\le \frac{Q}{AB^{k}}.\]
If $q\le Q$ then $\alpha\in \fM$, a contradiction, so we have
\[(\log B)^{\sigma}<q\le \frac{AB^{k}}{(\log B)^{\sigma}}.\]
We let $K=2^{k-1}$ and use Weyl differencing as in the usual proof of Weyl's inequality \cite[Lemma 2.4]{vaughan} to get:
\[\left| f(\alpha)\right|^{K}\ll B^{K-k}\left(B^{k-1}+\sum_{0<|\bh|\le B}\min\{B,\Vert\alpha k!h_{1}\cdots h_{k-1}\Vert^{-1}\}\right),\]
where $\bh=(h_{1},\dots,h_{k-1})$. Summing over $a$ gives
\begin{align*}
\sum_{0<|a|\le A}\left| f(a\alpha)\right|^{K}&\ll AB^{K-1}+B^{K-k}\sum_{0<|a|\le A}\sum_{0<|\bh|\le B}\min\{B,\Vert\alpha k!ah_{1}\cdots h_{k-1}\Vert^{-1}\}\\&\ll AB^{K-1}+B^{K-k}\sum_{n\le k!AB^{k-1}}\tau_{k}(n)\min\{B,\Vert\alpha n\Vert^{-1}\},
\end{align*}
where $\tau_{k}(n)$ is the number of ways of writing $n$ as a product of $k$ positive integers. One can easily show that $\tau_{k}(n)\le \tau(n)^{k}$. We can therefore use \cite[Lemma 6.1]{hua} which states that for $\sigma_{2}\ge 2^{3k}-1$:
\[\sum_{\substack{n\le M\\ (\log M)^{\sigma_{2}}\le \tau(n)^{k}}}\tau(n)^{k}\ll M(\log M)^{-\sigma_{2}}.\]
Applying this with $\sigma_{2}=\max\{\sigma_{1},2^{3k}-1\}$ and taking $\sigma=\sigma_{2}+\sigma_{1}+1$ and noting that $\log (AB^{k-1})\asymp \log B$ gives:
\[\sum_{0<|a|\le A}\left| f(a\alpha)\right|^{K}\ll AB^{K-1}+\frac{AB^{K}}{(\log B)^{\sigma_{2}}}+(\log B)^{\sigma_{2}}B^{K-k}\sum_{n\le k!AB^{k-1}}\min\{B,\Vert\alpha n\Vert^{-1}\}.\]
The result then follows from \cite[Lemma 2.2]{vaughan}, which implies that:
\[\sum_{x\le X} \min\{Y,\Vert\alpha x\Vert^{-1}\}\ll XY\left(\frac{1}{q}+\frac{1}{Y}+\frac{1}{X}+\frac{q}{XY}\right)\log(2Xq),\]
for all $X,Y\ge 1$.
\end{proof}
\end{lemma}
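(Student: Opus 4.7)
Set $K=2^{k-1}$. The approach is the classical Weyl-differencing estimate for $|f(\alpha)|^{K}$, summed over the dilation parameter $a$, with the resulting exponential sum controlled via divisor estimates and a Vaughan-type bound on $\min\{B,\|\cdot\|^{-1}\}$ sums.

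\emph{Step 1 (Dirichlet approximation).} Apply Dirichlet's theorem to produce coprime integers $r,q$ with $1\le q\le AB^{k}/Q$ and $|q\alpha-r|\le Q/(AB^{k})$, where $Q=(\log B)^{\sigma}$. Since $\alpha\in\fm$, having $q\le Q$ would place $\alpha$ in some major arc $\fM(q,r)$, a contradiction. Hence
\[Q<q\le \frac{AB^{k}}{Q}.\]

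\emph{Step 2 (Weyl differencing).} For each fixed $a$, apply the Weyl differencing identity $k-1$ times, exactly as in the proof of \cite[Lemma~2.4]{vaughan}, to get
\[|f(a\alpha)|^{K}\ll B^{K-k}\Bigl(B^{k-1}+\sum_{0<|\bh|\le B}\min\{B,\|k!\,a\,h_{1}\cdots h_{k-1}\alpha\|^{-1}\}\Bigr),\]
with $\bh=(h_{1},\dots,h_{k-1})$.

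\emph{Step 3 (Regrouping by product).} Sum over $0<|a|\le A$ and collect terms according to the value of $n=k!\,a\,h_{1}\cdots h_{k-1}$; the multiplicity is bounded by $\tau_{k}(n)\le\tau(n)^{k}$. This gives
\[\sum_{0<|a|\le A}|f(a\alpha)|^{K}\ll AB^{K-1}+B^{K-k}\sum_{n\le k!AB^{k-1}}\tau(n)^{k}\min\{B,\|\alpha n\|^{-1}\}.\]

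\emph{Step 4 (Divisor split and Vaughan's bound).} Set $M=k!AB^{k-1}$ and $\sigma_{2}=\max(\sigma_{1},2^{3k}-1)$, and split the $n$-sum by whether $\tau(n)^{k}\ge(\log M)^{\sigma_{2}}$. The large-divisor part is handled trivially by $\min\{B,\cdot\}\le B$ together with \cite[Lemma~6.1]{hua}, and contributes $\ll AB^{K}(\log B)^{-\sigma_{2}}$. For the small-divisor part, factor out $(\log B)^{\sigma_{2}}$ and apply \cite[Lemma~2.2]{vaughan} with $X=M$, $Y=B$:
\[\sum_{n\le M}\min\{B,\|\alpha n\|^{-1}\}\ll MB\Bigl(\tfrac{1}{q}+\tfrac{1}{M}+\tfrac{1}{B}+\tfrac{q}{MB}\Bigr)\log(2Mq).\]
In the range $Q<q\le AB^{k}/Q$ one has $1/q+q/(MB)\ll 1/Q$, and the remaining terms are negligible, so this piece contributes $\ll (\log B)^{\sigma_{2}+1}AB^{K}/Q$.

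\emph{Step 5 (Choice of $\sigma$).} Finally, taking $\sigma\ge\sigma_{1}+\sigma_{2}+2$ and using $\log A\asymp\log B$ forces every error term to be $\ll AB^{K}/(\log B)^{\sigma_{1}}$, as required.

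The main obstacle is Step 4: one must verify that the lower bound $q>Q$ dictated by $\alpha\in\fm$ dovetails with Vaughan's estimate to yield a genuine $1/Q$ saving (rather than something weaker coming from the $1/M$ or $1/B$ terms), and that the divisor-function cost $\tau_{k}(n)$ can be reduced to a mere $(\log B)^{\sigma_{2}}$ factor via \cite[Lemma~6.1]{hua}. The Weyl differencing and regrouping in Steps 2–3 are otherwise standard.
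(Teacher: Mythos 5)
Your proposal is correct and follows essentially the same route as the paper: Dirichlet approximation giving $Q<q\le AB^{k}/Q$ on the minor arcs, Weyl differencing as in Vaughan, regrouping over $n=k!\,a\,h_{1}\cdots h_{k-1}$ with multiplicity $\tau_{k}(n)\le\tau(n)^{k}$, the divisor split via Hua's Lemma 6.1, and Vaughan's Lemma 2.2 with $X=k!AB^{k-1}$, $Y=B$, where $1/q+q/(XY)\ll 1/Q$ delivers the saving. Your Step 4 merely makes explicit the large/small divisor split and the verification that the $1/X$ and $1/Y$ terms are negligible, which the paper leaves implicit, and your choice $\sigma\ge\sigma_{1}+\sigma_{2}+2$ matches the paper's $\sigma=\sigma_{1}+\sigma_{2}+1$ up to an inessential constant.
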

We are now ready to prove Lemma \ref{weyltype1}. The idea will be to replace $g$ with $f$ and then use Lemma \ref{weyltype2}.
\begin{proof}[Proof of Lemma \ref{weyltype1}]
To simplify notation, let $m=2^{k-2}$, $\bp=(p_{1},\dots,p_{m})$ and $\bq=(q_{1},\dots,q_{m})$ where $p_{j}$ and $q_{j}$ are primes. Further, let 
\[L(\bp,\bq)=(\log p_{1})\cdots (\log p_{m})(\log q_{1})\cdots (\log q_{m})\]
and
\[G(\alpha,x_{1},\dots,x_{m},y_{1},\dots,y_{m})=\sum_{0<|a|\le A} e\left(a\alpha\left(x_{1}^{k}+\cdots+x_{m}^{k}-y_{1}^{k}-\cdots-y_{m}^{k}\right)\right).\]
Then by expanding the sum and using Cauchy's inequality:
\begin{align}
\nonumber\sum_{0<|a|\le A} \left|g(a\alpha)\right|&^{2^{k-1}}\\
\nonumber&=\sum_{0<|a|\le A}\sum_{|\bp|\le B}\sum_{|\bq|\le B}L(\bp,\bq)e\left(a\alpha\left(p_{1}^{k}+\cdots+p_{m}^{k}-q_{1}^{k}-\cdots-q_{m}^{k}\right)\right)\\
\nonumber&=\sum_{|\bp|\le B}\sum_{|\bq|\le B}L(\bp,\bq)G(\alpha,\bp,\bq)\\
\label{weyltype1eq1}&\le B^{2^{k-2}}\left(\sum_{|\bp|\le B}\sum_{|\bq|\le B}L(\bp,\bq)^{2}|G(\alpha,\bp,\bq)|^{2}\right)^{1/2}.
\end{align}
Now that the summand is nonnegative we can replace the sum over primes with the same sum over nonnegative integers $\bx=(x_{1},\dots,x_{m})$, $\by=(y_{1},\dots,y_{m})$:
\begin{align*}
&\sum_{|\bp|\le B}\sum_{|\bq|\le B}L(\bp,\bq)^{2}|G(\alpha,\bp,\bq)|^{2}\\&\le (\log B)^{2^{k}}\sum_{0<|\bx|\le B}\sum_{0<|\by|\le B}|G(\alpha,\bx,\by)|^{2}\\&=(\log B)^{2^{k}}\sum_{0<|\bx|\le B}\sum_{0<|\by|\le B}\sum_{0<|a_{1}|\le A}\sum_{0<|a_{2}|\le A}\\& \qquad \qquad \qquad e\left(\left(a_{1}-a_{2}\right)\alpha\left(x_{1}^{k}+\cdots+x_{m}^{k}-y_{1}^{k}-\cdots-y_{m}^{k}\right)\right)\\&=(\log B)^{2^{k}}\sum_{0<|a_{1}|\le A}\sum_{0<|a_{2}|\le A}\left|f\left((a_{1}-a_{2})\alpha\right)\right|^{2^{k-1}}\\&\ll (\log B)^{2^{k}}AB^{2^{k-1}}+(\log B)^{2^{k}}A\sum_{0<|a|\le 2A} \left|f(a\alpha)\right|^{2^{k-1}},
\end{align*}
where in the last line we substituted $a=a_{1}-a_{2}$ and the $AB^{2^{k-1}}$ term comes from the $a=0$ summands. We now use Lemma \ref{weyltype2} with $\sigma_{1}=2\sigma_{0}+2^{k}$ and plug the result back into \eqref{weyltype1eq1} to complete the proof for $g$. To get the part about $\tilde{g}$, we have
\[\tilde{g}(a\alpha)=g(a\alpha,B)-g\left(a\alpha,B(\log B)^{-\psi}\right),\]
so
\[\sum_{0<|a|\le A}|\tilde{g}(a\alpha)|^{2^{k-1}}\ll \sum_{0<|a|\le A} \left(|g(a\alpha,B)|^{2^{k-1}}+\left|g\left(a\alpha,B(\log B)^{-\psi}\right)\right|^{2^{k-1}}\right).\]
We now apply the first part of the lemma with $B(\log B)^{-\psi}$ in place of $B$, noting that $\log (B(\log B)^{-\psi})\asymp \log A$.
\end{proof}
We are now ready to prove the final minor arc estimate. Recall the definitions \eqref{rhoAdef} and \eqref{rhotildeAdef} of $\rho_{\ba}$ and $\tilde{\rho}_{\ba}$ respectively.
\begin{lemma}\label{minorarcs}
Let $s=2t+u$ with $t\in \bN$, $2t\ge k+2$ and $u=1$ or $2$. Then for each $\gamma>0$ and $\sigma>0$ sufficiently large in terms of $\gamma$, we have for all $2\le B^{2k}\le A\le B^{2t-k}$:
\[\sum_{0<|\ba|\le A} \left| \rho_{\ba}(B,\fm)\right|^{2}\ll \frac{A^{s-2}B^{2s-2k}}{(\log B)^{\gamma}}\]
and
\[\sum_{0<|\ba|\le A} \left| \tilde{\rho}_{\ba}(B,\fm)\right|^{2}\ll \frac{A^{s-2}B^{2s-2k}}{(\log B)^{\gamma}}.\]
\begin{proof}
As in \cite[Lemma 4.4]{bd2014}:
\[\sum_{|\ba|\le A}\left| \rho_{\ba}(B,\fm)\right|^{2}=\sum_{|\ba|\le A}\int_{\fm} g_{\ba}(\alpha) d\alpha\int_{\fm} g_{\ba}(-\beta)d\beta=\int_{\fm}\int_{\fm} F(\alpha,-\beta)^{s}d\alpha d\beta,\]
where $F$ is as defined in \eqref{Falphabetadef}. By Lemma \ref{meanval}, and the assumption that $A\le B^{2t-k}$:
\begin{equation}\label{meanvalbound}\int_{0}^{1}\int_{0}^{1} F(\alpha,-\beta)^{2t}d\alpha d\beta\ll (\log B)^{K(t,k)}A^{2t-2}B^{4t-2k}.\end{equation}

Cauchy's inequality implies that, for all $\alpha,\beta\in\bR$,
\[|F(\alpha,-\beta)|\le \left(\sum_{0<|a|\le A}|g(a\alpha)|^{2}\right)^{1/2}\left(\sum_{0<|a|\le A}|g(a\beta)|^{2}\right)^{1/2}.\]
Then by H\"older's inequality and Lemma \ref{weyltype1} we have,
\begin{align}\label{pointwisebound}
\sup_{\alpha,\beta\in \fm}|F(\alpha,-\beta)|&=\sup_{\alpha\in\fm}\sum_{0<|a|\le A} |g(a\alpha)|^{2}\\ &\ll \sup_{\alpha\in\fm}A^{1-2^{2-k}}\left(\sum_{0<|a|\le A} |g(a\alpha)|^{2^{k-1}}\right)^{2^{2-k}}\ll \frac{AB^{2}}{(\log B)^{\sigma_{0}2^{2-k}}},
\end{align}
where $\sigma_{0}>0$ is arbitrary and $\sigma>0$ depends on $\sigma_{0}$. Hence choosing $\sigma_{0}$ such that \[\sigma_{0}2^{2-k}-K(t,k)\ge\gamma\] gives the first part.

The second part is similar, but instead of $F$, we have
\[\tilde{F}(\alpha,\beta)=\sum_{1\le|a|\le A}\sum_{B(\log B)^{-\psi}<p\le B}\sum_{B(\log B)^{-\psi}<q\le B} (\log p)(\log q)e\left(a\left(\alpha p^{k}+\beta q^{k}\right)\right).\]
And by considering the underlying Diophantine equation we get:
\[\int_{0}^{1}\int_{0}^{1}\tilde{F}(\alpha,-\beta)^{2t}d\alpha d\beta\le \int_{0}^{1}\int_{0}^{1}F(\alpha,-\beta)^{2t}d\alpha d\beta.\]
We can then apply \eqref{meanvalbound}. We also get, from the second part of Lemma \ref{weyltype1} and a similar argument to \eqref{pointwisebound}:
\[\sup_{\alpha,\beta\in\fm}|F(\alpha,-\beta)|\ll \frac{AB^{2}}{(\log B)^{\sigma_{0}2^{2-k}}}.\]
The result then follows in the same way as the first part, and with the same choice of $\sigma_{0}$.
\end{proof}
\end{lemma}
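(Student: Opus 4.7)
The plan is to convert the mean square into a double integral over $\fm\times\fm$, split $s=2t+u$ so that $2t$ of the factors are handled by the mean value bound of Lemma~\ref{meanval} and the remaining $u\in\{1,2\}$ factors are bounded pointwise on $\fm$ using Lemma~\ref{weyltype1}.

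Since $g_\ba$ has real coefficients, $\overline{g_\ba(\beta)}=g_\ba(-\beta)$, hence
\[
\sum_{0<|\ba|\le A}|\rho_\ba(B,\fm)|^2 = \int_\fm\int_\fm \sum_{0<|\ba|\le A} g_\ba(\alpha)\,g_\ba(-\beta)\,d\alpha\,d\beta = \int_\fm\int_\fm F(\alpha,-\beta)^s\,d\alpha\,d\beta,
\]
where $F$ is as defined in the proof of Lemma~\ref{meanval} and the inner sum factorises across coordinates. Writing $|F|^s=|F|^{2t}\cdot|F|^u$, I extract $(\sup_{\fm\times\fm}|F|)^u$ as a pointwise factor, extend the remaining integral of $|F|^{2t}$ to $[0,1]^2$, and invoke Lemma~\ref{meanval}: under the hypothesis $A\le B^{2t-k}$ the dominant term is $(\log B)^{K(t,k)}A^{2t-2}B^{4t-2k}$.

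For the pointwise supremum on $\fm$, Cauchy--Schwarz in $a$ gives
\[
|F(\alpha,-\beta)| \le \Bigl(\sum_{0<|a|\le A}|g(a\alpha)|^2\Bigr)^{1/2}\Bigl(\sum_{0<|a|\le A}|g(a\beta)|^2\Bigr)^{1/2},
\]
and H\"older lifts each $\ell^2$ sum to an $\ell^{2^{k-1}}$ sum at the cost of a factor $A^{1-2^{2-k}}$. Inserting Lemma~\ref{weyltype1} (with a parameter $\sigma_0$ chosen below, and $\sigma$ sufficiently large in terms of $\sigma_0$) then yields $\sup_{\alpha,\beta\in\fm}|F(\alpha,-\beta)|\ll AB^2(\log B)^{-\sigma_0 2^{2-k}}$. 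Multiplying this $u$-fold with the mean value bound produces
\[
\sum_{0<|\ba|\le A}|\rho_\ba(B,\fm)|^2 \ll A^{s-2}B^{2s-2k}(\log B)^{K(t,k)-u\sigma_0 2^{2-k}},
\]
so picking $\sigma_0\ge (K(t,k)+\gamma)2^{k-2}/u$ closes the first part. The estimate for $\tilde\rho_\ba$ runs identically: the $2t$-moment $\int_{[0,1]^2}\tilde F^{2t}$ is dominated by $\int_{[0,1]^2}F^{2t}$ via the underlying Diophantine system (dropping small primes only decreases the count), and the second half of Lemma~\ref{weyltype1} supplies the same pointwise bound for $\tilde g$.

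The crux is that $\fM$ has only polylog width $(\log B)^\sigma/(AB^k)$, so the Weyl-type saving available on the minor arcs is only a power of $\log B$; any $B^\eps$ loss from the mean value would ruin the argument. This is precisely why the polylog form of Lemma~\ref{meanval} (with $(\log B)^{K(t,k)}$ rather than $B^\eps$) is essential, and why $\sigma_0$ has to be chosen so that $u\sigma_0\cdot 2^{2-k}$ dominates both $K(t,k)$ and $\gamma$.
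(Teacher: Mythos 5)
Your proposal is correct and follows essentially the same route as the paper: the identity $\sum_{\ba}|\rho_{\ba}(B,\fm)|^{2}=\int_{\fm}\int_{\fm}F(\alpha,-\beta)^{s}\,d\alpha\,d\beta$, extraction of $u$ pointwise factors bounded via Cauchy--Schwarz, H\"older and Lemma \ref{weyltype1}, extension of the remaining $2t$-th moment to $[0,1]^{2}$ handled by Lemma \ref{meanval}, and the same comparison $\int\int\tilde F^{2t}\le\int\int F^{2t}$ for the $\tilde\rho$ case. The only difference is your slightly different (but equivalent) choice of $\sigma_{0}$, so nothing further is needed.
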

\begin{proof}[Proof of Theorem \ref{main2}]
Theorem \ref{main2} now follows immediately from Lemma \ref{majorarcs} and Lemma \ref{minorarcs}.
\end{proof}

Now we have proven Theorems \ref{main2} and \ref{main3}, we can use them to deduce Theorem \ref{main}.
\begin{proof}[Proof of Theorem \ref{main}]For $2\le B^{2k}\le A\le B^{\hat{s}-k}$, Theorem \ref{main2} gives us the following bound:
\[\#\left\{ |\ba|\le A: \left|\rho_{\ba}(B)-\fS_{\ba}J_{\ba}B^{s-k}\right|>\frac{B^{s-k}}{A(\log B)^{\gamma/3}}\right\}\ll \frac{A^{s}}{(\log A)^{\gamma/3}}.\]
(Noting that $\log A\asymp \log B$.) If $\ba$ is not in the above set, then
\[\rho_{\ba}(B)\ge \fS_{\ba}J_{\ba}B^{s-k}-\frac{B^{s-k}}{A(\log B)^{\gamma/3}}.\]
Suppose $\ba$ is also not in the sets described in \eqref{main3eq1} and \eqref{main3eq2}. If \eqref{maineq} has solutions in $\bR^{+}$ then $J_{\ba}\gg A^{-1}(\log A)^{-\eta}$ and if it has solutions in $\bZ_{p}^{\times}$ for all $p$ then $\fS_{\ba}\ge (\log A)^{-\eta}$. Hence, provided $\eta<\gamma/6$, we have that $\rho_{\ba}(B)>0$ for $A$, $B$ sufficiently large. Because $\gamma$ and $\eta$ are arbitrary, and $\delta\asymp \eta$, we may choose $\eta$ to be large enough that $\delta\ge\lambda$, $\eta\ge \lambda$ and then choose $\gamma>6\eta$ so that $\gamma/6>\eta$ and $\gamma/3>\lambda$. This gives us a solution in the range $x_{j}\le B$ for all $|\ba|\le A$ except for a set of size at most $O_{\lambda}(A^{s}/(\log A)^{\lambda})$.

Suppose $A/2< |\ba|\le A$ and take $B=A^{1/(\hat{s}-k)}$ (which is the largest that we can, given $2\le B^{2k}\le A\le B^{\hat{s}-k}$). Then \eqref{maineq} has a solution in primes $x_{j}\ll |\ba|^{1/(\hat{s}-k)}$ for all $A/2<|\ba|\le A$, $\ba\in\mathcal{C}'(k,s)$ except for a set of size
\[O_{\lambda}\left(\frac{A^{s}}{(\log A)^{\lambda}}\right).\]
Now we apply this to the ranges $A/2^{n+1}<|\ba|\le A/2^{n}$ for each $n\ge 0$ and sum over $n$ (we may suppose $2^{n}\le A/2$ because otherwise there are at most $O(1)$ choices of $\ba$). We get that the number of exceptional $\ba$ is at most
\begin{align*}&\ll_{\lambda} 1+\sum_{2^{n}\le A/2} \frac{A^{s}}{2^{sn}(\log A/2^{sn})^{\lambda}}\\ &\ll_{\lambda} 1+\sum_{2^{n}\le A^{1/2}} \frac{A^{s}}{2^{sn}(\log A)^{\lambda}}+\sum_{A^{1/2}<2^{n}\le A/2} (A/2^{n})^{s}\\ &\ll_{\lambda} \frac{A^{s}}{(\log A)^{\lambda}}.\end{align*}

For the second part, for each $\lambda>0$, we choose $\eta$, $\delta$ and $\gamma$ as above, and then for $\psi$ sufficiently large in terms of $\gamma$, the second part of Theorem \ref{main2} will hold. Hence, if we follow the same argument as above, we get the desired result.
\end{proof}

\section{The density of locally soluble equations}\label{localdensitysection}
In this section we prove Theorem \ref{main4}. We need to understand when \eqref{maineq} has a solution in $\bZ_{p}^{\times}$ for all $p$, and the following lemma allows us to do this when $p$ is large.
\begin{lemma}\label{chinonzero}
Let $k\ge 2$ and $s\ge 3$. For $r$ a positive integer, let
\[p_{0}(s,r)=\min\left\{p\text{ prime}:(p-1)^{s-1}p^{-s/2}>r^{s-1}\right\}.\]
Then for any prime $p$ such that $p\nmid k$ and $p\ge p_{0}(s,(k;p-1))$, and any $\ba=(a_{1},\dots,a_{s})$ such that at least $3$ of the $a_{j}$ are not divisible by $p$, we have $\chi_{p}>0$. In particular, if $p\ge p_{0}(s,k)$ then $p\nmid k$ so the above conclusion holds for all sufficiently large $p$.

Furthermore, we have
\[p_{0}(s,r)\le (2r)^{2\frac{s-1}{s-2}}+1.\]
\begin{proof}
Recall the definitions \eqref{xidef} of $\xi(p)$ and \eqref{lambdadef} of $\lambda(p)$. By Lemma \ref{singserlemma1}, it is enough to show that there is a solution in $(\bZ/p^{\xi(p)+\lambda(p)}\bZ)^{\times}$. Since at least $3$ of the $a_{j}$ are not divisible by $p$, we have $\lambda(p)=0$ and the assumptions on $p$ imply that $p\nmid k$, so $\xi(p)=1$. Hence it is enough to show that there is a solution in $(\bZ/p\bZ)^{\times}$. Since the $a_{j}$ with $p\mid a_{j}$ are all 0 (mod $p$) we may throw these away and suppose without loss of generality that $p\nmid a_{j}$ for all $j$ and that $s\ge 3$.

We apply the (mod $p$) version of the circle method. Recall that $M_{\ba}(p)$ is the number of solutions to
\[a_{1}x_{1}^{k}+\cdots+a_{s}x_{s}^{k}\equiv 0\:\text{(mod $p$)}\]
with $0<x_{j}<p$. We have
\begin{align}\nonumber M_{\ba}(p)&=\sum_{0<x_{1}<p}\dots\sum_{0<x_{s}<p}\frac{1}{p}\sum_{m=0}^{p-1}e\left(\frac{m}{p}(a_{1}x_{1}^{k}+\cdots+a_{s}x_{s}^{k})\right)\\\nonumber &=\frac{1}{p}\sum_{m=0}^{p-1}\left(\prod_{j=1}^{s}W(p,a_{j}m)\right)\\&=\frac{(p-1)^{s}}{p}+\frac{1}{p}\sum_{m=1}^{p-1}\left(\prod_{j=1}^{s}W(p,a_{j}m)\right).\end{align}
Since $a_{j}$ is invertible (mod $p$), we can show by reordering the sum that
\[\sum_{m=1}^{p-1} |W(p,a_{j}m)|^{s}=\sum_{m=1}^{p-1} |W(p,m)|^{s}.\]
Hence, by H\"older's inequality:
\begin{align}\nonumber \left|M_{\ba}(p)-\frac{(p-1)^{s}}{p}\right|&\le \frac{1}{p}\sum_{m=1}^{p-1}\prod_{j=1}^{s}|W(p,a_{j}m)|\\\nonumber &\le \frac{1}{p}\prod_{j=1}^{s}\left(\sum_{m=1}^{p-1} |W(p,a_{j}m)|^{s}\right)^{1/s}\\&=\frac{1}{p}\sum_{m=1}^{p-1} |W(p,m)|^{s}.\end{align}

Now, we know from \cite[Lemma 8.5]{hua} that when $(m;q)=1$, we have that $W(q,m)\ll q^{1/2+\eps}$, but if we go into the proof then we find that this may be somewhat improved when $q$ is a prime. We have that, for any $p\nmid \lambda$,
\[W(p,m)=\sum_{x=1}^{p-1} e_{p}(mx^{k})=\sum_{x=1}^{p-1} e_{p}(m(\lambda x)^{k})=W(p,\lambda^{k}m).\]
Since $\lambda^{k}$ runs through $(p-1)/(k;p-1)$ congruence classes (mod $p$), we get, for $p\nmid m$,
\[\frac{p-1}{(k;p-1)}|W(p,m)|^{2}\le \sum_{r=1}^{p-1}|W(p,r)|^{2}\le \sum_{r=0}^{p-1}|W(p,r)|^{2}.\]
The sum $\sum_{r=0}^{p-1}|W(p,r)|^{2}$ is equal to $p$ times the number of solutions to the equation
\[x_{1}^{k}\equiv x_{2}^{k}\:\text{(mod $p$)}\]
subject to $0<x_{j}<p$. For each $x_{1}$, there are exactly $(k;p-1)$ choices for $x_{2}$, giving $(p-1)(k;p-1)$ solutions in total. Hence,
\[\sum_{r=0}^{p-1}|W(p,r)|^{2}=p(p-1)(k;p-1),\]
and for $p\nmid m$,
\[|W(p,m)|\le p^{1/2}(k;p-1).\]
When $s\ge 3$,
\begin{align*}\frac{1}{p}\sum_{m=1}^{p-1} |W(p,m)|^{s}&\le \frac{1}{p}\left(\max_{0<m<p} |W(p,m)|\right)^{s-2}\sum_{m=1}^{p-1}|W(p,r)|^{2}\\ &\le p^{(s-2)/2}(k;p-1)^{s-1}(p-1).\end{align*}
Therefore $M_{\ba,n}(p)>0$ provided that
\[\frac{(p-1)^{s}}{p}>p^{(s-2)/2}(k;p-1)^{s-1}(p-1),\]
which is equivalent to
\[(p-1)^{s-1}p^{-s/2}>(k;p-1)^{s-1}.\]
We can easily check, for example by differentiating, that $(x-1)^{s-1}x^{-s/2}$ is an increasing function in $x>1$, so the above inequality is equivalent to $p\ge p_{0}(s,(k;p-1))$. So $M_{\ba}(p)>0$ (and hence $\chi_{p}>0$) provided that $p\ge p_{0}(s,(k;p-1))$, as required. It just remains to show the upper bound for $p_{0}(s,r)$. For this, note that if $p\ge 2$ then $p-1\ge p/2$. So
\[(p-1)^{s-1}p^{-s/2}\ge 2^{1-s}p^{(s-2)/2},\]
and $2^{1-s}p^{(s-2)/2}>r^{s-1}$ if and only if $p>(2r)^{2(s-1)/(s-2)}$.
\end{proof}
\end{lemma}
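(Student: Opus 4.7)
The plan is to reduce $\chi_p>0$ to the existence of a solution to \eqref{maineq} in $(\bZ/p\bZ)^\times$, and then establish this existence by a mod $p$ version of the circle method, with the crux being a pointwise bound on the Gauss-type sum $W(p,m)$ that tracks the dependence on $(k;p-1)$.

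First, by Lemma \ref{singserlemma1}, to show $\chi_p>0$ it suffices to find a solution in $(\bZ/p^{\xi(p)+\lambda(p)}\bZ)^\times$. The hypotheses $p\nmid k$, $p$ odd, and at least three coefficients coprime to $p$ force $\xi(p)=1$ and $\lambda(p)=0$, so the task reduces to producing a solution in $(\bZ/p\bZ)^\times$. After discarding the $a_j$ divisible by $p$ (which contribute $0$ mod $p$) we may assume all $a_j$ are units mod $p$ and $s\ge 3$. Applying orthogonality mod $p$ gives
\[ M_\ba(p) = \frac{(p-1)^s}{p} + \frac{1}{p}\sum_{m=1}^{p-1}\prod_{j=1}^{s} W(p,a_j m), \]
and it is enough to show the main term $(p-1)^s/p$ dominates the error. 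Hölder's inequality, together with the fact that multiplication by $a_j$ permutes $(\bZ/p\bZ)^\times$, bounds the error by $p^{-1}\sum_{m=1}^{p-1}|W(p,m)|^s$.

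The main technical step is the sharpened estimate $|W(p,m)|\le p^{1/2}(k;p-1)$ for $p\nmid m$. I would derive this from two ingredients: the invariance $W(p,m)=W(p,\lambda^k m)$ (via the substitution $x\mapsto\lambda x$), which shows $|W(p,m)|$ is constant on each of the $(p-1)/(k;p-1)$ cosets of $k$-th powers in $(\bZ/p\bZ)^\times$; and the mean-square identity $\sum_{r=0}^{p-1}|W(p,r)|^2=p(p-1)(k;p-1)$, obtained by expanding the square and counting solutions to $x_1^k\equiv x_2^k\pmod p$. Combining these, the error is at most
\[ \frac{1}{p}\bigl(\max|W|\bigr)^{s-2}\!\!\sum_{r=0}^{p-1}|W(p,r)|^2 \le p^{(s-2)/2}(k;p-1)^{s-1}(p-1), \]
and the main term strictly exceeds this precisely when $(p-1)^{s-1}p^{-s/2}>(k;p-1)^{s-1}$. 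Since $x\mapsto(x-1)^{s-1}x^{-s/2}$ is eventually increasing, this is equivalent to $p\ge p_0(s,(k;p-1))$, as required.

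For the explicit upper bound on $p_0(s,r)$, observe that $p-1\ge p/2$ gives $(p-1)^{s-1}p^{-s/2}\ge 2^{1-s}p^{(s-2)/2}$, so the inequality $2^{1-s}p^{(s-2)/2}>r^{s-1}$, equivalently $p>(2r)^{2(s-1)/(s-2)}$, is sufficient. Thus any prime just above $(2r)^{2(s-1)/(s-2)}$ qualifies, yielding the stated bound $p_0(s,r)\le(2r)^{2(s-1)/(s-2)}+1$. The main obstacle I anticipate is the refined bound on $|W(p,m)|$: the standard Weil-type estimate $|W(p,m)|\ll p^{1/2+\eps}$ loses both the explicit constant and, more importantly, the precise $(k;p-1)$ dependence. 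Without tracking that dependence, the argument would only yield the cruder threshold $p_0(s,k)$, which is considerably weaker whenever $(k;p-1)<k$.
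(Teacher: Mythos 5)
Your proposal is correct and follows essentially the same route as the paper's proof: reduction to $(\bZ/p\bZ)^\times$ via Lemma \ref{singserlemma1}, orthogonality plus H\"older, the invariance $W(p,m)=W(p,\lambda^k m)$ combined with the mean-square identity $\sum_{r}|W(p,r)|^2=p(p-1)(k;p-1)$ to get $|W(p,m)|\le p^{1/2}(k;p-1)$, and the same elementary estimate $p-1\ge p/2$ for the bound on $p_0(s,r)$. No gaps; the "main obstacle" you flag is exactly the refinement the paper carries out.
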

We say that a set of numbers $a_{1},\dots,a_{s}$ is $r$-coprime if for any prime $p$, if $p\mid (a_{1};\dots;a_{s})$ then $p\mid r$. We say that they are $k$-wise $r$-coprime if every subset of size $k$ is $r$-coprime. Note that the usual idea of coprime is the same as $1$-coprime in our definition. We also say that a set is $k$-wise $r$-coprime to $m$ if the greatest common divisor of any subset of size $k$ is $r$-coprime to $m$.

In \cite{h2013}, Hu found the probability that a random $s$-tuple is $k$-wise coprime, which generalised a result of T\'oth \cite{t2002} in which it was found the probability that an $s$-tuple is pairwise ($2$-wise) coprime. We generalise the result further to include congruence constraints on the tuples. Our proof will closely follow \cite{h2013}, although unfortunately it will be even more notationally complicated.

Given positive integers $s,k,r,n$, and $\bu=(u_{1},\dots,u_{k-1})$, $\bq=(q_{1},\dots,q_{s})$ and $\bb=(b_{1},\dots,b_{s})$ tuples of positive integers, define $Q_{s,k,\bb}^{(r,\bq,\bu)}(n)$ to be the number of $(a_{1},\dots,a_{s})$ which are $k$-wise $r$-coprime and $i$-wise $r$-coprime to $u_{i}$ for each $1\le i\le \min\{s,k-1\}$, such that $1\le a_{j}\le n$ and $a_{j}\equiv b_{j}$ (mod $q_{j}$) for all $j$. We are only interested in the case $\bu=(1,\dots,1)$, where $Q_{s,k,\bb}^{(r,\bq,\bu)}(n)$ is just the count of $k$-wise $r$-coprime tuples congruent to $\bb$, but we will use induction, and for the inductive step we need to consider general $\bu$.

Define:
\[A_{s,k,r}=\prod_{p\,\nmid \,r}\sum_{m=0}^{k-1}\binom{s}{m}\left(1-p^{-1}\right)^{s-m}p^{-m},\]
(where we say that $\binom{s}{m}=0$ for $m>s$),
\[A_{s,k,r,\bq}=\left(\prod_{j=1}^{s}q_{j}^{-1}\right)A_{s,k,r},\]
\[f_{s,k,r,i}(u)=\prod_{\substack{p\mid u\\p\,\nmid\, r}}\left(1-\frac{\sum_{m=i}^{k-1}\binom{s}{m}(p-1)^{k-1-m}}{\sum_{m=0}^{k-1}\binom{s}{m}(p-1)^{k-1-m}}\right),\]
\[\delta(s,k)=\max\left\{\binom{s-1}{i},i=1,2,\dots,k-1\right\},\]
and $\theta(u)$ to be the number of squarefree divisors of $u$. 
\begin{lemma}\label{kcoprimedense}
Let $s,k$ be fixed positive integers. Then for all positive integers $n,r$ and tuples of positive integers $\bu=(u_{1},\dots,u_{k-1})$, $\bq=(q_{1},\dots,q_{s})$, $\bb=(b_{1},\dots,b_{s})$ such that $q_{i}\mid r$ for all $i$, and $r,u_{1},\dots,u_{k-1}$ are pairwise coprime, we have:
\begin{equation}\label{kcoprimedenseeq1}Q_{s,k,\bb}^{(r,\bq,\bu)}(n)=A_{s,k,r,\bq}\left(\prod_{i=1}^{k-1}f_{s,k,r,i}(u_{i})\right)n^{s}+O\left(\theta(u_{1})n^{s-1}(\log n)^{\delta(s,k)}\right),\end{equation}
where the implied constant depends only on $s$ and $k$. Hence, the probability that a tuple of positive integers $(a_{1},\dots,a_{s})$ is $k$-wise $r$-coprime and $a_{j}\equiv b_{j}$ (mod $q_{j}$) is $A_{s,k,r,\bq}$.
\begin{remark}
 Setting $r=1$, $\bq=\bb=(1,\dots,1)$ gives the main result of \cite{h2013}. Note that $k$ has a different meaning here to elsewhere in the paper. This is an unfortunate clash of notation but we have chosen it to be consistent with \cite{h2013}.
\end{remark}
\begin{proof}
As in \cite{h2013} we let $(a;b]$ be the product of all $p^{e}$ with $p$ prime, $p\mid a$ and $p^{e}\Vert b$. Then $b/(a;b]$ is the number obtained from $b$ by removing from its prime factorisation all the primes which divide $a$, and so $a,b$ are $r$-coprime if and only if $a,b/(r;b]$ are coprime.

As in \cite{h2013}, let
\[\alpha_{s,i}(d)=d^{i}\prod_{p\mid d}\sum_{m=0}^{i}\binom{s}{m}\left(1-p^{-1}\right)^{i-m}p^{-m},\]
where in this formula, $\binom{s}{i}=0$ for $i>s$ and $0^{0}=1$.

If we let $f_{s,k,i}=f_{s,k,1,i}$, then
\[f_{s,k,r,i}(u_{i})=f_{s,k,r,i}\left(\frac{u_{i}}{(r;u_{i}]}\right)=f_{s,k,i}\left(\frac{u_{i}}{(r;u_{i}]}\right).\]
It follows from this, \cite[Lemma 4]{h2013} and the multiplicativity of $f_{s,k,r,i}$, that:
\begin{equation}\label{kcoprimedenseeq6}\frac{f_{s,k,r,i}(u_{i})}{f_{s,k,r,i+1}(u_{i})}=\sum_{\substack{d\mid u_{i}\\(d;r)=1}} \frac{\mu(d)\binom{s}{i}^{\omega(d)}}{\alpha_{s,i}(d)}\end{equation}
for $1\le i\le k-2$, and
\begin{equation}\label{kcoprimedenseeq7}f_{s,k,r,k-1}(u_{k-1})=\sum_{\substack{d\mid u_{k-1}\\(d;r)=1}}\frac{\mu(d)\binom{s}{k-1}^{\omega(d)}}{\alpha_{s,k-1}(d)}.\end{equation}
As in \cite{h2013} define $j\ast \bu$ by
\[j\ast \bu=\left(u_{1}(j;u_{2}),\frac{u_{2}(j;u_{3})}{(j;u_{2}]},\dots,\frac{u_{k-2}(j;u_{k-1})}{(j;u_{k-2}]},\frac{ju_{k-1}}{(\prod_{i=2}^{k-1}[j;u_{i}))(j;u_{k-1}]}\right).\]
Letting $\bb=(b_{1},\dots,b_{s+1})$, $\bb'=(b_{1},\dots,b_{s})$, $\bq=(q_{1},\dots,q_{s+1})$, $\bq'=(q_{1},\dots,q_{s})$, we have the recurrence relation:
\begin{equation}\label{kcoprimedenserecurrence}Q_{s+1,k,\bb}^{(r,\bq,\bu)}(n)=\sum_{\substack{j=1\\(j;u_{1}/(r;u_{1}])=1\\j\equiv b_{s+1}\text{(mod $q_{s+1}$)}}}^{n}Q_{s,k,\bb'}^{(r,\bq',j\ast\bu)}(n).\end{equation}
The proof of \eqref{kcoprimedenserecurrence} is very similar to that of \cite[Lemma 3]{h2013}, so we omit it.

We now prove the lemma by induction on $s$. For $s=1$ we have
\[Q_{1,k,b}^{(r,q,u)}(n)=\sum_{\substack{j=1\\(j;u/(r;u])=1\\j\equiv b\text{(mod $q$)}}}^{n} 1=\sum_{\substack{d\mid u\\(d;r)=1}}\mu(d)\sum_{\substack{e\le n/d\\de\equiv b\text{(mod $q$)}}} 1,\]
where we used the fact that $d\mid u/(r;u]$ if and only if $d\mid u$ and $(d;r)=1$. For each $d$ in the sum, $(d;r)=1$ and $q\mid r$, so $d$ is invertible (mod $q$) and $de\equiv b$ becomes $e\equiv d^{-1}b$. It follows that
\begin{equation}\label{kcoprimedenseeq4}\sum_{\substack{e\le n/d\\de\equiv b\text{(mod $q$)}}} 1=\frac{n}{dq}+O(1)\end{equation}
and
\begin{align}\label{kcoprimedenseeq3}Q_{1,k,b}^{(r,q,u)}(n)=\sum_{\substack{d\mid u\\(d;r)=1}}\mu(d)\left(\frac{n}{dq}+O(1)\right)&=\frac{n}{q}\sum_{\substack{d\mid u\\(d;r)=1}}\frac{\mu(d)}{d}+O\left(\sum_{d\mid u}\mu(d)^{2}\right)\nonumber\\\nonumber&=\frac{n}{q}\frac{\varphi(u/(r;u]))}{u/(r;u]}+O(\theta(u))\\&=\frac{n}{q}A_{1,k,r}f_{1,k,r,1}(u)+O(\theta(u)).\end{align}
Here we used the identities $A_{1,k,r}=1$ and $f_{1,k,r,1}(u)=\varphi(u/(r;u])/(u/(r;u])$, which can easily be shown.

Now suppose the lemma holds for $s$. Let $\bb'$, $\bq'$ be as above and, to simplify notation, let $b=b_{s+1}$ and $q=q_{s+1}$. We may also assume that $(u_{i};r)=1$ for all $i$ by replacing $u_{i}$ with $u_{i}/(r;u_{i}]$ and noting that both sides of equation \eqref{kcoprimedenseeq1} remain unchanged (except that the error term becomes smaller). The inductive hypothesis and \eqref{kcoprimedenserecurrence} then give:
\begin{align*}&Q_{s+1,k,\bb}^{(r,\bq,\bu)}(n)=\sum_{\substack{j=1\\(j;u_{1})=1\\j\equiv b\text{(mod $q$)}}}^{n}Q_{s,k,\bb'}^{(r,\bq',j\ast\bu)}(n)\\ &=\sum_{\substack{j=1\\(j;u_{1})=1\\j\equiv b\text{(mod $q$)}}}^{n}A_{s,k,r,\bq'}\left(\prod_{i=1}^{k-2}f_{s,k,r,i}\left(\frac{u_{i}(j;u_{i+1})}{(j;u_{i}]}\right)\right)\\&\times f_{s,k,r,k-1}\left(\frac{ju_{k-1}}{(\prod_{i=2}^{k-1}[j;u_{i}))(j;u_{k-1}]}\right)n^{s}+O\left(\sum_{j=1}^{n}\theta(u_{1}(j;u_{2}))n^{s-1}(\log n)^{\delta(s,k)}\right)\end{align*}
By multiplicativity of $f_{s,k,r,i}$ and the fact that $f_{s,k,r,i}((j;u])=f_{s,k,r,i}((j;u))$ (which can be seen from the definition), this is equal to
\begin{align}\label{kcoprimedenseeq2}Q_{s+1,k,\bb}^{(r,\bq,\bu)}(n)&=A_{s,k,r,\bq'}\left(\prod_{i=1}^{k-1}f_{s,k,r,i}(u_{i})\right)n^{s} \sum_{\substack{j=1\\(j;u_{1})=1\\j\equiv b\text{(mod $q$)}}}^{n}\left(\prod_{i=1}^{k-2}\frac{f_{s,k,r,i}((j;u_{i+1}))}{f_{s,k,r,i+1}((j;u_{i+1}))}\right)\nonumber\\&\times f_{s,k,r,k-1}\left(\frac{j}{\prod_{i=2}^{k-1}[j;u_{i})}\right)+O\left(\theta(u_{1})n^{s-1}(\log n)^{\delta(s,k)}\sum_{j=1}^{n}\theta(j)\right).\end{align}
By a similar argument to that on \cite[page 1268]{h2013}, using \eqref{kcoprimedenseeq6} and \eqref{kcoprimedenseeq7} we get:
\begin{align}\sum_{\substack{j=1\\(j;u_{1})=1\\j\equiv b\text{(mod $q$)}}}^{n}\left(\prod_{i=1}^{k-2}\frac{f_{s,k,r,i}((j;u_{i+1}))}{f_{s,k,r,i+1}((j;u_{i+1}))}\right)f_{s,k,r,k-1}\left(\frac{j}{\prod_{i=2}^{k-1}[j;u_{i})}\right)\nonumber\\=\sum_{\substack{d_{1}d_{2}\cdots d_{k-1}\le n\\d_{i}\mid u_{i+1},i=1,\dots,k-2\\(d_{k-1};u_{i})=1,i=1,2,\dots,k-1\\(d_{k-1};r)=1}}\left(\prod_{i=1}^{k-1}\frac{\mu(d_{i})\binom{s}{i}^{\omega(d_{i})}}{\alpha_{s,i}(d_{i})}\right)\sum_{\substack{e\le \frac{n}{d_{1}d_{2}\cdots d_{k-1}}\\(e;u_{1})=1\\d_{1}\cdots d_{k-1}e\equiv b\text{(mod $q$)}}}1.\end{align}
By \eqref{kcoprimedenseeq3}, we have:
\[\sum_{\substack{e\le \frac{n}{d_{1}d_{2}\cdots d_{k-1}}\\(e;u_{1})=1\\d_{1}\cdots d_{k-1}e\equiv b\text{(mod $q$)}}}1=\frac{n}{qd_{1}d_{2}\cdots d_{k-1}}\frac{\varphi(u_{1})}{u_{1}}+O(\theta(u_{1})).\]
Here we have used the fact that $(u_{1};r)=1$ (so $(r;u_{1}]=1$). If we apply this and use the fact that $\alpha_{s,i}(d)\ge d$, and then use the fact that $\mu(d_{i})$, $\binom{s}{i}^{\omega(d_{i})}$ and $\alpha_{s,i}(d_{i})$ are all multiplicative, we get:
\begin{align}\label{kcoprimedenseeq5}
\sum_{\substack{j=1\\(j;u_{1})=1\\j\equiv b\text{(mod $q$)}}}^{n}&\left(\prod_{i=1}^{k-2}\frac{f_{s,k,r,i}((j;u_{i+1}))}{f_{s,k,r,i+1}((j;u_{i+1}))}\right)f_{s,k,r,k-1}\left(\frac{j}{\prod_{i=2}^{k-1}[j;u_{i})}\right)\nonumber\\=&\frac{\varphi(u_{1})}{qu_{1}}n\sum_{\substack{d_{1}d_{2}\cdots d_{k-1}=d\le n\\d_{i}\mid u_{i+1},i=1,2,\dots,k-2\\(d_{k-1};u_{i})=1,i=1,2,\dots,k-1\\(d_{k-1};r)=1}}\prod_{i=1}^{k-1}\frac{\mu(d_{i})\binom{s}{i}^{\omega(d_{i})}}{d_{i}\alpha_{s,i}(d_{i})}\nonumber\\&+O\left(\theta(u_{1})\sum_{d\le n}\frac{\delta(s+1,k)^{\omega(d)}}{d}\right).
\end{align}
Then, letting
\[x_{i,p}=1-\frac{\binom{s}{i}}{p\sum_{m=0}^{i}\binom{s}{m}(p-1)^{i-m}},\]
we see that the right hand side of \eqref{kcoprimedenseeq5} is equal to
\begin{align}\label{kcoprimedenseeq8}q^{-1}n\left(\prod_{i=0}^{k-2}\prod_{p\mid u_{i+1}}x_{i,p}x_{k-1,p}^{-1}\right)\prod_{p\,\nmid \,r} x_{k-1,p}+&O\left(n\sum_{d>n}\frac{\delta(s+1,k)^{\omega(d)}}{d^{2}}\right)\nonumber\\+&O\left(\theta(u_{1})\sum_{d\le n}\frac{\delta(s+1,k)^{\omega(d)}}{d}\right).\end{align}
(We have used the fact that $\varphi(u_{1})/u_{1})=\prod_{p\mid u_{1}}x_{0,p}$.) We now put \eqref{kcoprimedenseeq8} into equation \eqref{kcoprimedenseeq2}. For the main term we use the fact that
\[x_{i,p}=\frac{\sum_{m=0}^{i}\binom{s+1}{m}(p-1)^{i-m}}{\sum_{m=0}^{i}\binom{s}{m}(p-1)^{i-m}}\left(1-\frac{1}{p}\right),\]
which follows from a a simple computation, and then we deal with the error terms by a similar argument to that in \cite{h2013}.
\end{proof}
\end{lemma}

\begin{proof}[Proof of Theorem \ref{main4}]
Let $P\ge p_{0}(s,k)$, where $p_{0}$ is as in Lemma \ref{chinonzero},
\[R(A)=\#\{\ba\in \mathcal{C}'(k,s):|\ba|\le A\}=\#\{|\ba|\le A:\fS_{\ba}J_{\ba}>0\}\]
and
\[R'(A)=\#\{|\ba|\le A:\fS_{\ba}J_{\ba}>0,(a_{1};\dots;a_{s})=1\}.\]
Also, let $C'_{p}(k,s)$ be the set of $\ba\in C_{p}(k,s)$ such that $p\nmid a_{j}$ for at least one $j$ and define
\[\delta'_{p}=\lim_{A\rightarrow\infty}\frac{\#\{|\ba|\le A: \ba\in C'_{p}(k,s)\}}{(2A)^{s}},\]
\begin{align}\tilde{\delta}_{p}&=\sum_{m=0}^{s-3}\binom{s}{m}(1-p^{-1})^{s-m}p^{-m}\nonumber\\&=1-\frac{1}{p^{s}}-s\left(1-\frac{1}{p}\right)\frac{1}{p^{s-1}}-\frac{s(s-1)}{2}\left(1-\frac{1}{p}\right)^{2}\frac{1}{p^{s-2}}\end{align}
and
\[Q=\prod_{p\le P}p^{\xi(p)},\]
where $\xi(p)$ is as in Lemma \ref{Tazero}. It is not immediately obvious that $\delta'_{p}$ is well-defined, but Lemma \ref{singserlemma1} tells us that $\ba\in C'_{p}(k,s)$ if and only if $p\nmid a_{j}$ for some $j$ and there is a solution to \eqref{maineq} in $(\bZ/p^{\xi(p)}\bZ)^{\times}$. Hence, whether $\ba\in C'_{p}(k,s)$ depends only on the congruence class of $\ba$ (mod $p^{\xi(p)})$. It then follows quite easily that the limit exists. It also follows from the above argument that as long as $C'_{p}(k,s)\ne \emptyset$ then $\delta'_{p}\ge p^{-\xi(p)s}$. We can set $a_{1}=\dots=a_{s-1}=1$, $a_{s}=1-s$ and then $(1,\dots,1)$ will be a solution in $\bZ_{p}^{\times}$ and $\ba\in C'_{p}(k,s)$. Hence, $\delta'_{p}>0$ for all $p$.

Our aim now is to show that for all $A\ge Q$:
\begin{multline*}\delta_{\infty}\left(\prod_{p\le P}\delta'_{p}\right)\left(\prod_{p>P}\tilde{\delta}_{p}\right)+O\left(Q^{s}A^{-1}(\log A)^{\delta(s,s-2)}\right)\le\\ \frac{R'(A)}{(2A)^{s}}\le \delta_{\infty}\prod_{p\le P}\delta'_{p}+O(QA^{-1}).\end{multline*}
For the upper bound, suppose that $(a_{1};\dots;a_{s})=1$ and $\fS_{\ba}J_{\ba}>0$. Then $\ba\in C'_{p}(k,s)$ for every $p$ because $\chi_{p}>0$ implies $\ba\in C_{p}(k,s)$ and coprimality implies $p\nmid a_{j}$ for at least one $j$. As discussed above, $\ba\in C'_{p}(k,s)$ if and only if $p\nmid a_{j}$ for some $j$ and \eqref{maineq} has a solution in $(\bZ/p^{\xi(p)}\bZ)^{\times}$. It follows that there exists a set of tuples $\bb$ with $0\le b_{j}<p^{\xi(p)}$ such that $\ba\in C'_{p}(k,s)$ if and only if for one of these $\bb$, $a_{j}\equiv b_{j}$ (mod $p^{\xi(p)}$) for all $j$. By the Chinese remainder theorem, we have a set $\mathcal{B}$ of tuples $\bb$ with $0\le b_{j}<Q$ such that $\ba\in C'_{p}(k,s)$ for all $p\le P$ if and only if for some $\bb\in \mathcal{B}$, $a_{j}\equiv b_{j}$ (mod $Q$) for all $j$. The condition $J_{\ba}>0$ is equivalent to the $a_{j}$ not all having the same sign. Given $\bb\in \mathcal{B}$, the number of $|\ba|\le A$ with $a_{j}\equiv b_{j}$ (mod $Q$) and with $a_{j}$ not all having the same sign is, assuming $A\ge Q$,
\[(2^{s}-2)\left(A/Q+O(1)\right)^{s}=\delta_{\infty}\frac{(2A)^{s}}{Q^{s}}+O(A^{s-1}Q^{1-s}).\]
Summing over $\mathcal{B}$ and noting that $|\mathcal{B}|=Q^{s}\prod_{p\le P}\delta'_{p}$ then gives the upper bound.

Now for the lower bound, we apply Lemma \ref{kcoprimedense}. Note that for $p>P\ge p_{0}(s,k)$ we have by Lemma \ref{chinonzero} that $\ba\in C'_{p}(k,s)$ provided at least $3$ of the $a_{j}$ are not divisible by $p$. This holds for all $p>P$ if and only if $a_{1},\dots,a_{s}$ are $(s-2)$-wise $Q$-coprime (in the notation of Lemma \ref{kcoprimedense}). If we let $\bb\in \mathcal{B}$ and assume that $a_{j}\equiv b_{j}$ (mod $Q$) and $a_{1},\dots,a_{s}$ are $(s-2)$-wise $Q$-coprime then $\fS_{\ba}>0$. Furthermore, since for all $p\le P$, at least one of the $b_{j}$ is nonzero (mod $p$), we have $p\nmid a_{j}$ for some $j$ and so $(a_{1},\dots,a_{s})=1$. Hence if we also assume that the $a_{j}$ do not all have the same sign then $\ba\in \mathcal{C}'(k,s)$. We can then estimate the number of such $\ba$ using Lemma \ref{kcoprimedense}, with $\bu=(1,\dots,1)$, $r=Q$ and $\bq=(Q,\dots,Q)$. There is, however, a slight issue in that we assumed $a_{j}>0$ for that lemma, but this is easily remedied by considering separately each possible value for $(\mathrm{sign}(a_{1}),\dots,\mathrm{sign}(a_{s}))$ and then applying the lemma to $(|a_{1}|,\dots,|a_{s}|)$. This gives:
\[R'(A)\ge (2^{s}-2)|\mathcal{B}|\frac{A_{s,s-2,Q}}{Q^{s}}A^{s}+O\left(Q^{s}A^{s-1}(\log A)^{\delta(s,s-2)}\right).\]
We have $|\mathcal{B}|/Q^{s}=\prod_{p\le P}\delta'_{p}$ and by looking at the definition, we find that $A_{s,s-2,Q}=\prod_{p>P}\tilde{\delta}_{p}$. Hence we get the lower bound.

Now we establish a formula for $\delta'_{p}$ which holds for any $p$ with the properties that $p>2$, $p\nmid k$ and for all $\ba$ if at least $3$ of the $a_{j}$ are not divisible by $p$ then $\ba\in C_{p}(k,s)$. Recall that this assumption holds for all $p>P$ by Lemma \ref{chinonzero} and that for such $p$, $\xi(p)=1$. It suffices to count the number of $0\le a_{j}<p$ not all zero such that \eqref{maineq} has a solution in $(\bZ/p\bZ)^{\times}$. We separate the $\ba$ according to how many of the $a_{j}$ are $0$. The number of $\ba$ with exactly $i$ of them $0$ is
\[\binom{s}{i}(p-1)^{s-i}.\]
If at least $3$ of the $a_{j}$ are nonzero then there is definitely a solution by Lemma \ref{chinonzero}. The single case where all the $a_{j}$ are $0$ is not counted. In the case where exactly $1$ $a_{j}$ is nonzero, there is never a solution because when we reduce (mod $p$) we get $a_{j}x_{j}^{k}\equiv 0$ and the only solution to this is $x_{j}\equiv 0$. There are $s(p-1)$ of these $\ba$. Now when exactly two are nonzero, say $a_{i},a_{j}$, we have
\[a_{i}x_{i}^{k}+a_{j}x_{j}^{k}\equiv 0\:\text{(mod $p$)},\]
which rearranges to
\[-\frac{a_{i}}{a_{j}}\equiv \left(\frac{x_{j}}{x_{i}}\right)^{k}\:\text{(mod $p$)}\]
and so a solution exists if and only if $-a_{i}/a_{j}$ is a $k$th power residue (mod $p$). There are $(p-1)/(p-1;k)$ of these by \cite[Lemma 8.4]{hua}. Hence, for there to be no solutions, there are $(p-1)$ possible choices for $a_{i}$, and for each of these, there are $(p-1)-(p-1)/(p-1;k)$ choices for $a_{j}$. Including the $s(s-1)/2$ choices for $i,j$ gives a total of
\[\frac{s(s-1)}{2}(p-1)^{2}\left(1-(p-1;k)^{-1}\right)\]
tuples $\ba$ where there is no solution and in which exactly $2$ of the $a_{j}$ are nonzero. Putting all this together, we get that
\begin{align}\label{pre_deltaformula}\delta'_{p}&=p^{-s}\left(p^{s}-1-s(p-1)-\frac{s(s-1)}{2}(p-1)^{2}\left(1-(p-1;k)^{-1}\right)\right)\nonumber\\&=1-\frac{1}{p^{s}}-s\left(1-\frac{1}{p}\right)\frac{1}{p^{s-1}}-\frac{s(s-1)}{2}\left(1-\frac{1}{p}\right)^{2}\left(1-(p-1;k)^{-1}\right)\frac{1}{p^{s-2}}.\end{align}
In particular, $\delta'_{p}=1-O(p^{-(s-2)})$ and when $s\ge 4$,
\[\left|\delta_{\infty}\prod_{p\le P}\delta'_{p}-\delta_{\infty}\prod_{p}\delta'_{p}\right|\ll \sum_{p>P}\frac{1}{p^{s-2}}\ll P^{-(s-3)}.\]
We also have $\tilde{\delta}_{p}=1-O(p^{-(s-2)})$ and so similarly
\[\left|\delta_{\infty}\left(\prod_{p\le P}\delta'_{p}\right)\left(\prod_{p>P}\tilde{\delta}_{p}\right)-\delta_{\infty}\prod_{p}\delta'_{p}\right|\ll P^{-(s-3)}.\]
We therefore have
\[\frac{R'(A)}{(2A)^{s}}=\delta_{\infty}\prod_{p}\delta'_{p}+O\left(P^{-(s-3)}\right)+O\left(Q^{s}A^{-1}(\log A)^{\delta(s,s-2)}\right).\]

It remains to choose $P$ in a way which minimises the error terms. Since $\xi(p)=1$ for all but finitely many $p$, we have,
\[Q\ll \prod_{p\le P} p=\exp\left(\sum_{p\le P}\log p\right)=\exp\left(P+o(P))\right)\ll \exp((1+\eps)P),\]
where for the second equality we used the prime number theorem. Let $c<1/s$ be a small constant and $P=c\log A$, then
\[Q^{s}A^{-1}(\log A)^{\delta(s,s-2)}\ll A^{cs(1+\eps)-1}\ll (\log A)^{-(s-3)}\]
and
\[P^{-(s-3)}\ll (\log A)^{-(s-3)}.\]
Also, $A>Q$ for sufficiently large $A$, so
\[R'(A)=\left(\delta_{\infty}\prod_{p}\delta'_{p}\right)(2A)^{s}+O\left(\frac{A^{s}}{(\log A)^{-(s-3)}}\right).\]

All that remains is to turn this into a similar formula for $R(A)$. Note that $\ba\in \mathcal{C}'(k,s)$ if and only if $n\ba\in \mathcal{C}'(k,s)$ for every nonzero integer $n$, so the number of $\ba\in \mathcal{C}'(k,s)$ with $|\ba|\le A$ and $(a_{1};\dots;a_{s})=d$ is equal to $R'(A/d)$, and hence
\begin{multline}\label{RintermsofR'}R(A)=\sum_{d=1}^{\infty} R'(A/d)=\left(\delta_{\infty}\prod_{p}\delta'_{p}\right)(2A)^{s}\sum_{d=1}^{\infty}d^{-s}+O\left(\sum_{d\le A^{1/2}}\frac{A^{s}}{d^{s}(\log A/d)^{s-3}}\right)\\+O\left(\sum_{d>A^{1/2}}\frac{A^{s}}{d^{s}}\right).\end{multline}
We can easily show that
\[\sum_{d\le A^{1/2}}\frac{A^{s}}{d^{s}(\log A/d)^{s-3}}+\sum_{d>A^{1/2}}\frac{A^{s}}{d^{s}}\ll \frac{A^{s}}{(\log A)^{s-3}}.\]
Then $\sum_{d=1}^{\infty} d^{-s}=\zeta(s)=\prod_{p}(1-p^{-s})^{-1}$, so \eqref{density_limit} will follow provided that
\[\delta_{p}=\delta'_{p}\left(1-\frac{1}{p^{s}}\right)^{-1}.\]
This can be shown in a similar way to \eqref{RintermsofR'} by considering, for each $n$, the $\ba\in C_{p}(k,s)$ with $p^{n}\Vert (a_{1};\dots;a_{s})$ and then summing over $n$. Finally, we also get \eqref{deltaformula} for any $p$ such that \eqref{pre_deltaformula} holds.
\end{proof}
\begin{example}
As an example, we shall calculate $\fD_{4,2}$. We have $\delta_{\infty}=1-2^{1-4}=7/8$. From the proof of Theorem \ref{main4}, we know that when $p\ge p_{0}(4,2)$, $\delta_{p}$ is given by \eqref{deltaformula}. When $s=4$, $k=2$, this is
\[\delta_{p}=1-\left(1-\frac{1}{p^{4}}\right)^{-1}\left(4\left(1-\frac{1}{p}\right)\frac{1}{p^{3}}+6\left(1-\frac{1}{p}\right)^{2}\frac{1}{p^{2}}\right),\]
where we have used the fact that $(2;p-1)=2$ for all $p>2$. We can easily compute $p_{0}(4,2)=11$. For $p<11$ we can use the fact, also from the proof of Theorem \ref{main4}, that $\delta_{p}=\delta'_{p}(1-p^{-4})^{-1}$.

To find $\delta'_{2}$ we have $\xi(2)=3$ so we need to count the number of equations $a_{1}x_{1}^{2}+\cdots+a_{4}x_{4}^{2}=0$ which are soluble (mod $8$) and such that $a_{j}$ are not all even. By a computer search, we find that there are $448$ of these.  Hence $\delta'_{2}=448/8^{4}=7/64$ and $\delta_{2}=\delta'_{2}(15/16)^{-1}=7/60$. For all larger $p$, $\xi(p)=1$. We calculate $\delta'_{3}=26/3^{4}$, $\delta'_{5}=496/5^{4}$, $\delta'_{7}=2268/7^{4}$. Therefore, $\delta_{3}=13/40$, $\delta_{5}=31/39$, $\delta_{7}=189/200$. Hence
\[\fD_{4,2}=\frac{31899}{1280000}\prod_{p\ge 11}\left(1-\left(1-\frac{1}{p^{4}}\right)^{-1}\left(4\left(1-\frac{1}{p}\right)\frac{1}{p^{3}}+6\left(1-\frac{1}{p}\right)^{2}\frac{1}{p^{2}}\right)\right).\]
By evaluating the product up to $p<100$ we find that this is approximately $0.023$.
\end{example}

\section{The density of equations with a prime solution}\label{primesolublesection}
In this section we will prove Theorem \ref{main5}. By \eqref{deltapwhenpbad}, the existence of primes $p$ with $(p-1)\mid k$ means that the density of locally soluble equations will never tend to 1, so we need a different approach. We will consider equations of the form
\begin{equation}\label{maineqinhom}a_{1}x_{1}^{k}+\cdots+a_{s}x_{s}^{k}=n.\end{equation}
By fixing some of the variables, we will turn \eqref{maineq} into something of the form \eqref{maineqinhom} and then we hope to control $n$ so that for each $p$ with $(p-1)\mid k$, there is a solution in $\bZ_{p}^{\times}$.

Define $\chi_{p}(\ba,n)$ analogously to $\chi_{p}(\ba)$ but with \eqref{maineqinhom} in place of \eqref{maineq}. Lemma \ref{chinonzero} can be generalised to show that if at least 3 of the $a_{i}$ are not divisible by $p$ then for all $p\ge p_{0}(s,k)$, $\chi_{p}(\ba,n)>0$. However, for proving Theorem \ref{main5}, we would like to ensure that $\chi_{p}(\ba,n)>0$ for as many $p$ as possible. Assuming that sufficiently many of the $a_{i}$ are not divisible by $p$, we will show that $\chi_{p}(\ba,n)>0$ for all $p$ except those with $(p-1)\mid k$.
\begin{lemma}\label{chinonzero2}
Suppose $p$ is a prime such that $(p-1)\nmid k$. Also let $a_{1},\dots,a_{s},n$ be such that $p\nmid a_{j}$ for at least $3k$ of the $j$s. Then $\chi_{p}(\ba,n)>0$.
\begin{proof}
Without loss of generality suppose that $p\nmid a_{j}$ for $1\le j\le t$. The proof is similar to that of \cite[Lemma 8.8]{hua}, and we will use some lemmas from there. The first is \cite[Lemma 8.4]{hua}, which says that $x^{k}$ takes exactly $(p-1)/(k;p-1)$ distinct values (mod $p$) as $x$ runs over $1\le x\le p-1$. Another is the Cauchy-Davenport lemma \cite[Lemma 8.7]{hua}, which says that if $S_{1},S_{2}\subseteq \bZ/p^{n}\bZ$ such that $x\not\equiv y$ (mod $p$) for all distinct $x,y \in S_{2}$ then
\[\#(S_{1}+S_{2})\ge \min\{\#S_{1}+\#S_{2}-1,p^{n}\}.\]
Applying this inductively to the sets $\{a_{j}x^{k}: 1\le x\le p-1\}$ for $1\le j\le t$ we get that
\begin{multline*}\#\left\{a_{1}x_{1}^{k}+\cdots+a_{t}x_{t}^{k} \text{(mod $p^{\nu(p)+1}$)}: x_{j}\in (\bZ/p^{\nu(p)+1}\bZ)^{\times}\right\}\ge\\ \min\left\{t\frac{p-1}{(p-1;k)}-(t-1),p^{\nu(p)+1}\right\}.\end{multline*}
Hence, if we can show that $t(p-1)/(p-1;k)-(t-1)\ge p^{\nu(p)+1}$ then there will be a solution to $a_{1}x_{1}^{k}+\cdots+a_{s}x_{s}^{k}\equiv n$ (mod $p^{\nu(p)+1}$) with $p\nmid x_{j}$ (just let $x_{j}=1$ for $j>t$). By a similar argument to that given in the proof of Lemma \ref{singserlemma1}, this implies that $\chi_{p}>0$ (note that $(p-1)\nmid k$ so $p$ must be odd and $\xi(p)=\nu(p)+1$, and that if $p^{m}\Vert (a_{1};\dots;a_{s})$ and \eqref{maineqinhom} has solutions in $(\bZ/p^{m+1}\bZ)^{\times}$ then we must also have $p^{m}\mid n$).

It therefore just remains to show that 
\begin{equation}\label{chinonzero2_eq1}t(p-1)/(p-1;k)-(t-1)\ge p^{\nu(p)+1}.\end{equation}
Let, $k_{0}$, $\nu(p)$ be such that $k=p^{\nu(p)}k_{0}$ and $p\nmid k_{0}$. In the proof of \cite[Lemma 8.8]{hua}, it is shown that \eqref{chinonzero2_eq1} holds provided $t\ge 2k$, $(p-1)\nmid k_{0}$ and $k_{0}\nmid (p-1)$. They also show that it holds when $t\ge 2k$, $(p-1)\nmid k_{0}$ and $p-1=mk_{0}$ with $m>2$. All that remains is the case $k=p^{\nu(p)}(p-1)/2$. In this case $(p-1;k_{0})=(p-1)/2$ so if $t\ge 3k$ then
\[t\frac{p-1}{(p-1;k_{0})}-(t-1)=t+1> 3k=p^{\nu(p)}\frac{3(p-1)}{2}\ge p^{\nu(p)+1}.\]
\end{proof}
\end{lemma}
\begin{lemma}\label{inhom}
For each $k\ge 1$, there exists some $s_{0}(k)$ such that the following holds. Let $s\ge s_{0}(k)$ and $\ba=(a_{1},\dots,a_{s})\in \bZ^{s}$, $n\in \bZ$ be such that:
\begin{enumerate}
\item The numbers $a_{j}$ are all nonzero and do not all have the same sign.
\item Equation \eqref{maineqinhom} has a solution in $\bZ_{p}^{\times}$ (or equivalently $\chi_{p}(\ba,n)>0$) for every prime $p$.
\end{enumerate}
Then \eqref{maineqinhom} has a solution in the primes.
\begin{proof}
The proof is a fairly standard application of the circle method and so we will only provide a sketch. In addition to $\chi(\ba,n)$ defined earlier, we also define $\rho_{\ba,n}$, $\fS_{\ba,n}$, $J_{\ba,n}$ and $T_{\ba,n}(q)$ in the obvious way. The assumption that $a_{j}$ do not all have the same sign implies that $J_{\ba,n}=J_{\ba}$ and so in particular this is nonzero. Most of the lemmas from Section \ref{singsersection} generalise easily, and in particular $\fS_{\ba,n}>0$ provided that \eqref{maineqinhom} has a solution in $\bZ_{p}^{\times}$ for all $p$. This holds by the second assumption.

We take $s_{0}(k)=2^{k}+1$ (this can be improved upon, but this will not be necessary for our purposes). We wish to establish the following asymptotic formula:
\begin{equation}\label{inhomasymptotic}\rho_{\ba,n}(X)=J_{\ba}\fS_{\ba,n}X^{s-k}+o\left(X^{s-k}\right).\end{equation}
(Recall that $J_{\ba}=J_{\ba,n}$). Here, $\ba$ and $n$ are considered fixed, so that the error term may depend on them. By the discussion above, it suffices to prove \eqref{inhomasymptotic}.

We let $Q=(\log X)^{\sigma}$ for some $\sigma>0$ and take the following major arcs:
\[\fM(q,r)=\left\{\alpha:\Vert \alpha-r/q\Vert \ll Q/X^{k}\right\}\cap [0,1],\]
which are simpler than those considered in the proof of Theorem \ref{main2} because now $\ba$ is fixed. We of course define the minor arcs to be $\fm=[0,1]\setminus\fM$. Then
\[\rho_{\ba,n}(X)=\int_{0}^{1} g_{\ba}(\alpha,X)e(-n\alpha)d\alpha.\]
For the minor arcs we have:
\[\int_{\fm}  g_{\ba}(\alpha,X)e(-n\alpha)d\alpha\le \left(\prod_{j=2^{k}+1}^{s}\sup_{\alpha\in \fm} |g(a_{j}\alpha)|\right)\int_{0}^{1}\prod_{j=1}^{2^{k}} |g(a_{j}\alpha)|d\alpha.\]
Now by \cite[Theorem 10]{hua} we can show that for all $\sigma_{0}>0$, there exists a choice of $\sigma>0$ such that, for all $1\le j\le s$:
\[\sup_{\alpha\in \fm} |g(a_{j}\alpha)|\ll \frac{X}{(\log X)^{\sigma_{0}}}.\]
By H\"older's inequality followed by a substitution of $\beta=a_{j}\alpha$ we have
\[\int_{0}^{1}\prod_{j=1}^{2^{k}} |g(a_{j}\alpha)|d\alpha\le \int_{0}^{1}|g(\beta)|^{2^{k}}d\beta\le (\log X)^{2^{k}}\int_{0}^{1}|f(\beta)|^{2^{k}}d\beta,\]
where $f$ is defined as in Lemma \ref{weyltype2}. We now bound this integral by $O(X^{2^{k}-k})$ using Hua's lemma (\cite[Theorem 4]{hua}). Combining all this, we get
\[\int_{\fm} g_{\ba}(\alpha)e(-n\alpha)d\alpha=o\left(X^{s-k}\right).\]
For the major arcs, we use Lemma \ref{gapprox} with $A=\max_{j} |a_{j}|$, $B=X$ and take the integral over $\fM(q,r)$ (note that, unlike the Theorem \ref{main2}, Lemma \ref{gapprox} does not require the assumption $A\ge B^{2k}$). This gives, instead of \eqref{majorarcseq2},
\begin{align*}\varphi(q)^{-s}\left(\prod_{j}W(q,a_{j}r)\right)e\left(-\frac{rn}{q}\right)\int_{-QX^{-k}}^{QX^{-k}} v_{\ba}(\beta,X)e(-\beta n)d\beta\\ +O\left(X^{s-k}\exp\left(-c\sqrt{\log X}\right)\right).\end{align*}
Then, proceeding as in \eqref{majorarcseq3},
\begin{align*}\int_{-QX^{-k}}^{QX^{-k}}v_{\ba}(\beta,X)e(-\beta n)d\beta&=X^{s-k}\int_{-Q}^{Q}v_{\ba}(\xi)e\left(-X^{-k}\xi n\right)d\xi\\&=X^{s-k}\left(J_{\ba}(Q)+O\left(Q^{2}X^{-k}n\right)\right).\end{align*}
Since the coefficients do not all have the same sign, we may let $X\rightarrow \infty$ while keeping $n$ fixed, and this means we can ignore the $O(Q^{2}X^{-k}n)$ term. We can then show that $\fS_{\ba,n}(Q)\ll \sum_{q>Q}q^{1-s/2+\eps}$ (which follows from Lemma \ref{singconv}) and $J_{\ba}-J_{\ba}(Q)\ll Q^{-(s-k)/k}$ (which follows from the same argument as \eqref{majorarcseq4}), to finish the major arc analysis.
\end{proof}
\end{lemma}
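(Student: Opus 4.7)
The plan is to apply the Hardy--Littlewood circle method to count prime solutions of \eqref{maineqinhom}, establishing the asymptotic
\[\rho_{\ba,n}(X) = \fS_{\ba,n} J_{\ba,n} X^{s-k} + o(X^{s-k})\]
as $X\to\infty$, where $\rho_{\ba,n}(X) = \int_0^1 g_{\ba}(\alpha,X) e(-n\alpha)\,d\alpha$ is the log-weighted count of prime solutions with $p_j\le X$. Positivity of the main term then forces a prime solution once $X$ is large. Since $\ba$ and $n$ are fixed, no uniformity in these parameters is needed, which simplifies many of the estimates from earlier in the paper. The singular integral $J_{\ba,n}$ equals $J_{\ba}$ (the inhomogeneity contributes only a phase $e(-\beta n)$, which becomes a negligible perturbation in the truncated integral), and this is $\gg |\ba|^{-1}>0$ by the second half of Theorem \ref{main3}. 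For the singular series $\fS_{\ba,n}=\sum_q T_{\ba,n}(q)$, where the natural definition includes an $e_q(-rn)$ factor, all the machinery of Section \ref{singsersection}---multiplicativity, the Weyl bound on $W(q,r)$, absolute convergence of the series, and the Euler product $\fS_{\ba,n}=\prod_p \chi_p(\ba,n)$---goes through unchanged, so hypothesis (2) yields $\fS_{\ba,n}>0$.

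I would take major arcs $\fM(q,r)=\{\alpha:\|\alpha-r/q\|\le Q/X^k\}$ with $q\le Q=(\log X)^{\sigma}$ for $\sigma$ to be chosen large, and set $\fm=[0,1]\setminus\fM$. On a single $\fM(q,r)$, Lemma \ref{gapprox} (whose proof never uses the hypothesis $A\ge B^{2k}$) gives
\[g(a_j\alpha)=\varphi(q)^{-1}W(q,a_jr)v(a_j\beta,X)+O\!\left(X e^{-c\sqrt{\log X}}\right),\]
and a telescoping argument pulls this through the product $g_{\ba}$. Integrating against $e(-n\alpha)$ and summing over $q,r$ produces the expected main term $\fS_{\ba,n}J_{\ba}X^{s-k}$, with tail errors $\fS_{\ba,n}-\fS_{\ba,n}(Q)$ and $J_{\ba}-J_{\ba}(Q)$ controlled by Lemma \ref{singconv} and the bound $v(\beta)\ll \min(1,|\beta|^{-1/k})$, exactly as in the proof of Lemma \ref{majorarcs}. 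Replacing $e(-\beta n)$ by $1$ in the singular-integral step costs only an $O(Q^2 n X^{-k})$ error, which is negligible as $X\to\infty$ with $n$ fixed.

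The main obstacle is the minor-arc bound, and this forces the choice $s_0(k)=2^k+1$. I would pull out $s-2^k$ factors of $g(a_j\alpha)$ pointwise on $\fm$ using a standard Weyl-type estimate (cf.\ \cite[Theorem 10]{hua}, or the method of Lemmas \ref{weyltype1}--\ref{weyltype2}): for every $\sigma_0>0$, choosing $\sigma$ sufficiently large in terms of $\sigma_0$ yields $\sup_{\alpha\in \fm}|g(a_j\alpha)|\ll X(\log X)^{-\sigma_0}$ for each fixed $j$. The remaining $2^k$ factors are then handled by H\"older's inequality and a substitution $\beta=a_j\alpha$, which reduces the problem to estimating $\int_0^1|g(\beta)|^{2^k}d\beta$. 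Bounding the log weights by $\log X$ and invoking Hua's lemma \cite[Theorem 4]{hua} gives $\int_0^1|f(\beta)|^{2^k}d\beta\ll X^{2^k-k+\eps}$, so the total minor-arc contribution to $\rho_{\ba,n}(X)$ is $O\!\bigl(X^{s-k}(\log X)^{-\sigma_0(s-2^k)+O(1)}\bigr)=o(X^{s-k})$, provided $\sigma_0$ is chosen large enough. Combined with the major-arc analysis this gives the asymptotic, and hence a prime solution of \eqref{maineqinhom}.
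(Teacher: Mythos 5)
Your proposal is correct and follows essentially the same route as the paper's own proof: the same choice $s_{0}(k)=2^{k}+1$, major arcs of width $Q/X^{k}$ with $Q=(\log X)^{\sigma}$ handled via Lemma \ref{gapprox} with the $e(-\beta n)$ factor costing only $O(Q^{2}nX^{-k})$, and minor arcs treated by extracting $s-2^{k}$ pointwise Weyl-type factors (Hua's Theorem 10) and applying H\"older plus Hua's lemma to the remaining $2^{k}$ factors, with the singular series and integral positivity drawn from Section \ref{singsersection} and Theorem \ref{main3}. No gaps beyond the level of sketch the paper itself permits.
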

\begin{proof}[Proof of Theorem \ref{main5}]
Fix $k\ge 2$ and let
\[d(s)=\liminf_{A\rightarrow \infty}\frac{\#\{|\ba|\le A: \text{\eqref{maineq} has a solution in the primes} \}}{(2A)^{s}}.\]

The main obstacles to the proof are those primes $p$ with $(p-1)\mid k$, because in this case, no matter how large $s$ is, we still can not guarantee that $\chi_{p}(\ba)>0$. We list these primes as $p_{1},p_{2},\dots,p_{r}$. Let $t=3k+p_{1}^{\xi(p_{1})}+\cdots+p_{r}^{\xi(p_{r})}$ and suppose $s>t$. Suppose $\ba=(a_{1},\dots,a_{s})$ are $(s-t+1)$-wise coprime, at least $t+1$ of the $a_{j}$ are positive and at least $t+1$ are negative. We claim that, under these assumptions, when $s$ is sufficiently large, $a_{1}x_{1}^{k}+\cdots+a_{s}x_{s}^{k}=0$ has a solution in the primes. Assuming the claim, we will then have that
\[d(s)\ge 1-d_{1}(s)-2d_{2}(s),\]
where
\[d_{1}(s)=\limsup_{A\rightarrow \infty}\frac{\#\{|\ba|\le A: \text{$(a_{1},\dots,a_{s})$ are not $(s-t+1)$-wise coprime}\}}{(2A)^{s}}\]
and
\[d_{2}(s)=\limsup_{A\rightarrow \infty}\frac{\#\{|\ba|\le A: \text{at most $t$ of the $a_{j}$ are positive}\}}{(2A)^{s}}.\]
It will then suffice to prove that $d_{j}(s)\ll s^{\lambda(k)}2^{-s}$ for $j\in \{1,2\}$ and some $\lambda(k)$.

To prove the claim, we first note that  a set is $(s-t+1)$-wise coprime if and only if every prime divides at most $s-t$ of its elements. In other words, for every prime $p$, there are at least $t$ of the $a_{j}$ which are not divisible by $p$. 

Suppose first that $k>2$. If we have some integers $b_{1},\dots,b_{p^{l}}$ which are all not divisible by $p$, then by applying the Cauchy-Davenport lemma \cite[Lemma 8.7]{hua} to the sets $\{0,b_{1}\},\dots,\{0,b_{p^{l}}\}$, we get that for any $m\in \bZ$, there exists a subset of the $b_{j}$ whose sum is congruent to $m$ (mod $p^{l}$). At least $t\ge p_{1}^{\xi(p_{1})}$ of the $a_{j}$ are not divisible by $p_{1}$ and so applying this to these $a_{j}$ and reordering if necessary, we have some $u_{1}\le p_{1}^{\xi(p_{1})}$ such that $a_{s-u_{1}+1}+\cdots+a_{s} \equiv a_{1}+\cdots+a_{s}$ (mod $p_{1}^{\xi(p)}$) and hence
\[a_{1}+\cdots+a_{s-u_{1}}\equiv 0\: \left(\text{mod $p_{j}^{\xi(p_{j})}$}\right).\] Similarly, there are at least $t-u_{1}\ge p_{2}^{\xi(p_{2})}$ of the $a_{1},\dots,a_{s-u_{1}}$ which are not divisble by $p_{2}$, and by reordering $a_{1},\dots,a_{s-u_{1}}$ if necessary, we have some $u_{2}$ with $u_{2}-u_{1}\le p_{2}^{\xi(p_{2})}$ such that \[a_{1}+\cdots+a_{s-u_{2}}+a_{s-u_{1}+1}+\cdots+a_{s}\equiv 0\: \left(\text{mod $p_{j}^{\xi(p_{j})}$}\right).\] By applying this argument inductively, we get that after possibly reordering the $a_{1},\dots,a_{s}$, there exist $u_{1},\dots,u_{r}$ with $u_{j}-u_{j-1}\le p_{j}^{\xi(p_{j})}$ for $1\le j\le r$ (where $u_{0}$ is taken to be $0$), such that
\begin{equation}\label{main5proofeq1}a_{1}+\cdots+a_{s-u_{j}}+a_{s-u_{j-1}+1}+\cdots+a_{s}\equiv 0\: \left(\text{mod $p_{j}^{\xi(p_{j})}$}\right)\end{equation}
for $1\le j\le r$. Let $\tilde{s}=s-u_{r}$. For $1\le i\le r$ and $s-u_{i}+1\le j\le s-u_{i-1}$, let $x_{j}=p_{i}$, and let
\[n=-(a_{\tilde{s}+1}x_{\tilde{s}+1}^{k}+\cdots+a_{s}x_{s}^{k}).\]
Now, consider the congruence
\[a_{1}x_{1}^{k}+\cdots+a_{\tilde{s}}x_{\tilde{s}}^{k}\equiv n\: \left(\text{mod $p_{i}^{\xi(p_{i})}$}\right)\]
and note that for each $p$ there are at least $t-u_{r}\ge 3k$ of the $a_{1},\dots,a_{\tilde{s}}$ not divisible by $p$. We claim that this congruence has a solution in integers $x_{j}$ with $p_{i}\nmid x_{j}$ for all $j\le \tilde{s}$, and hence $\chi_{p_{i}}(\tilde{\ba},n)>0$ for all $1\le i\le r$, where $\tilde{\ba}=(a_{1},\dots,a_{\tilde{s}})$.

To see this, suppose first that $p_{i}$ is odd or $p_{i}=2$, $2\nmid k$. Then $p_{i}^{\xi(p_{i})-1}(p_{i}-1)\mid k$, and so by Euler's totient theorem, $x^{k}\equiv 1$ (mod $p_{i}^{\xi(p_{i})}$) whenever $p_{i}\nmid x$. Also, $p_{i}^{k}\equiv 0$ (mod $p_{i}^{\xi(p_{i})}$), because $k\ge p_{i}^{\xi(p_{i})-1}(p_{i}-1)\ge \xi(p_{i})$ and so by equation \eqref{main5proofeq1} and the definition of $n$, taking $x_{j}=1$ for all $j\le \tilde{s}$ will give a solution. When $p_{i}=2$ and $2\mid k$, we have that $2^{\xi(2)-2}\mid k$. Every element of $(\bZ/2^{\xi(2)}\bZ)^{\times}$ has order dividing $2^{\xi(2)-2}$ because $\xi(2)\ge 3$, so $x^{k}\equiv 1$ (mod $2^{\xi(2)}$) whenever $2\nmid x$. Because $k>2$, we have $2^{k}\equiv 0$ (mod $2^{\xi(2)}$) and so as before, $x_{j}=1$ for $j\le \tilde{s}$ is a solution.

Recall that for any prime $p$, at least $3k$ of the $a_{1},\dots,a_{\tilde{s}}$ are not divisible by $p$, and so by Lemma \ref{chinonzero2}, $\chi_{p}(\tilde{\ba},n)>0$ for all primes $p$ other than $p_{1},\dots,p_{r}$. The condition on the number of positive and negative $a_{j}$ implies that $a_{1},\dots,a_{\tilde{s}}$ are not all the same sign as well. Hence by Lemma \ref{inhom}, for sufficiently large $s$, there are always primes $x_{1},\dots,x_{\tilde{s}}$ such that
\[a_{1}x_{1}^{k}+\cdots+a_{\tilde{s}}x_{\tilde{s}}^{k}=n.\]
But recalling the definition of $n$, we have primes $x_{1},\dots,x_{s}$ such that
\[a_{1}x_{1}^{k}+\cdots+a_{s}x_{s}^{k}=0.\]
This proves the claim when $k>2$.

When $k=2$, we have $r=2$ and $p_{1}=2$, $p_{2}=3$, $\xi(2)=3$, $\xi(3)=1$. The place where the above argument fails is that $2^{\xi(2)}=8$ and we no longer have $2^{k}\equiv 0$ (mod $2^{\xi(2)})$. Let $m=a_{1}+\cdots+a_{s}$. At least $8$ of the $a_{j}$ are odd, and so by the Cauchy-Davenport lemma, there exists $u_{1}\le 8$ such that, after reordering,
\[3(a_{s-u_{1}+1}+\cdots+a_{s})\equiv -m\: \left(\text{mod $8$}\right).\]
Now let, $x_{j}=2$ for $s-u_{1}+1\le j\le s$. Also choose $u_{2}$ with $u_{2}-u_{1}\le 3$ such that, after reordering,
\[a_{1}+\cdots+a_{s-u_{2}}+a_{s-u_{1}+1}+\cdots+a_{s}\equiv 0\: \left(\text{mod $3$}\right).\]
Let $x_{j}=3$ for $s-u_{2}+1\le j \le s-u_{1}$ and as before, let $\tilde{s}=s-u_{2}$ and
\[n=-(a_{\tilde{s}+1}x_{\tilde{s}+1}^{k}+\cdots+a_{s}x_{s}^{k}).\]
Then $x_{1}=\cdots=x_{\tilde{s}}=1$ is a solution of
\[a_{1}x_{1}^{2}+\cdots+a_{\tilde{s}}x_{\tilde{s}}^{2}\equiv n\: \text{(mod $8\cdot 3$)}.\]
Hence, if $\tilde{a}=(a_{1},\dots,a_{\tilde{s}})$ then $\chi_{2}(\tilde{a},n)>0$, $\chi_{3}(\tilde{\ba},n)>0$. By a similar argument to earlier, we have $\chi_{p}(\tilde{a},n)>0$ for all $p>3$ as well, and $a_{1},\dots,a_{\tilde{s}}$ are not all the same sign, so Lemma \ref{inhom} gives the claim.

Now we just need to bound $d_{j}(s)$. For $d_{2}(s)$, we have that the probability that exactly $v$ of the $a_{j}$ are positive is $\binom{s}{v}2^{-s}$, so summing this over $v\le t$ gives
\[d_{2}(s)=2^{-s}\sum_{v=0}^{t} \binom{s}{v}\ll_{t} s^{t}2^{-s},\]
where the implied constant depends on $t$, but this is not a problem because $t$ is a function of $k$.

Now consider $d_{1}(s)$, $s>t$. If $a_{1},\dots,a_{s}$ are not $(s-t+1)$-wise coprime then neither are $|a_{1}|,\dots,|a_{s}|$. By Lemma \ref{kcoprimedense} with $r=1$, $\bq=\bb=(1,\dots,1)$, $\bu=(1,\dots,1)$ (or just by the main result of \cite{h2013}) we have that the probability of this is $1-A_{s,s-t+1}$, where $A_{s,s-t+1}=A_{s,s-t+1,1}$. Therefore:
\begin{align*}d_{1}(s)&\le 1-A_{s,s-t+1}=1-\prod_{p}\sum_{m=0}^{s-t}\binom{s}{m}(1-p^{-1})^{s-m}p^{-m}\\&=1-\prod_{p}\left(1-\sum_{m=0}^{t-1}\binom{s}{m}(1-p^{-1})^{m}p^{-(s-m)}\right)\\ &\ll \sum_{p}\sum_{m=0}^{t-1}\binom{s}{m}(1-p^{-1})^{m}p^{-(s-m)}\\ &\ll \sum_{p}s^{t-1}p^{-(s-t+1)}\ll s^{t-1}2^{-(s-t+1)}\ll s^{t-1}2^{-s}.\end{align*}
Hence we may take $\lambda(k)=t$.
\end{proof}

\section{Counterexamples and a partial converse}\label{countersection}
In this section we prove some further results about the prime Hasse principle. The first is a proof of Theorem \ref{main6}, a partial converse.
\begin{proof}[Proof of Theorem \ref{main6}]
Suppose that $s\ge 3$ and $\lambda>0$. Suppose for some $\ba$ that $(*)$ does not hold. Then there exists a solution $\bx$ with $x_{j}>|\ba|^{\lambda}$ prime and there exists $p$ such that \eqref{maineq} has no solutions in $\bZ_{p}^{\times}$. We must have $x_{j}=p$ for some $j$, because otherwise $x_{j}\in \bZ_{p}^{\times}$ for all $j$. Hence $p>|\ba|^{\lambda}$. If we suppose $\lambda\ge 1$, then $p>|\ba|$, but this contradicts that $p\mid a_{j}$ for some $j$. Hence we get the first part. For the second part, suppose now that $s\ge 4$, $\lambda>0$. The result certainly holds when $\lambda\ge 1$, so suppose $\lambda<1$. Now if $|\ba|\ge p_{0}(s,k)^{1/\lambda}$, then $p\ge p_{0}(s,k)$ and by Lemma \ref{chinonzero}, $p$ must divide at least $s-2$ of the $a_{j}$. Therefore, for all $A/2\ge p_{0}(s,k)^{1/\lambda}$:
\begin{align}\label{main6proofeq1}
&\#\{A/2<|\ba|\le A: (*)\text{ does not hold}\}\nonumber\\ &\le \sum_{p>(A/2)^{\lambda}}\#\{A/2<|\ba|\le A: p\mid a_{j}\text{ for at least $s-2$ of the $a_{j}$}\}\nonumber\\ &\ll \sum_{p>(A/2)^{\lambda}} \frac{A^{s}}{p^{s-2}}\ll_{\lambda} A^{s-\lambda(s-3)}.\end{align}
To get the result, apply \eqref{main6proofeq1} with $A$ replaced by $A/2^{m}$ for each $m\ge0$ with $A/2^{m+1}\ge p_{0}(k)^{1/\lambda}$ to get
\[\#\{A/2^{m+1}<|\ba|\le A/2^{m}: \text{($\ast$) does not hold}\}\ll_{\lambda} A^{s-\lambda(s-3)}2^{-m(s-\lambda(s-3))}.\]
Now we sum over all $m$, noting that $s-\lambda(s-3)>0$ and that the contribution from the $m$ with $A/2^{m+1}< p_{0}(k)^{1/\lambda}$ is $O(1)$. The result then follows.
\end{proof}
\bigskip
Our final results are some counterexamples to the prime Hasse principle. The Hasse-Minkowski theorem states that the Hasse principle holds for all equations of the form $f(\bx)=0$ with $f$ a quadratic homogeneous integer polynomial (integral quadratic form). A natural question to ask is whether this is still true for the prime Hasse principle \ref{pHp}. The answer turns out to be no, and in the following theorem we construct an infinite set of counterexamples which are diagonal in $3$ variables.

At the heart of the construction lie Pythagorean triples. A Pythagorean triple is $(a,b,c)$ where $a,b,c$ are positive integers and $a^{2}+b^{2}=c^{2}$. It is called primitive if $(a;b;c)=1$ and in this case exactly one of $a$ or $b$ is even, so we may suppose that $b$ is even without loss of generality. The following lemma is classical.
\begin{lemma}\label{pythag_param}
Let $(a,b,c)$ be a primitive Pythagorean triple with $b$ even. Then there exist coprime integers $m>n>0$ with $m\not\equiv n$ (mod $2$) such that
\begin{align*}a&=m^{2}-n^{2}\\b&=2mn\\
c&=m^{2}+n^{2}.\end{align*}
\end{lemma}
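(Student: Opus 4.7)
The plan is to follow the standard classical argument, which factors the Pythagorean relation over $\bZ$. Starting from $a^{2}+b^{2}=c^{2}$, I would rewrite this as $b^{2}=(c-a)(c+a)$ and exploit the parity assumption: since $b$ is even and the triple is primitive, $a$ and $c$ must both be odd (if either were even, all three would have to share a factor of $2$). Hence $c-a$ and $c+a$ are both even, and I can set $u=(c-a)/2$ and $v=(c+a)/2$, both positive integers with
\[uv=\left(\tfrac{b}{2}\right)^{2}.\]

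The key intermediate step is to show that $\gcd(u,v)=1$. Since $u+v=c$ and $v-u=a$, any common prime divisor of $u$ and $v$ would divide both $a$ and $c$. But $\gcd(a,c)=1$: any prime dividing both $a$ and $c$ would divide $b^{2}=c^{2}-a^{2}$ and hence $b$, contradicting primitivity of $(a,b,c)$. So $u,v$ are coprime positive integers whose product is a perfect square, and unique factorization in $\bZ$ forces each of them to be a perfect square individually, say $u=n^{2}$ and $v=m^{2}$ with $m>n>0$ (since $v>u$).

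Unwinding the substitutions then yields
\[a=v-u=m^{2}-n^{2},\qquad c=v+u=m^{2}+n^{2},\qquad b=2\sqrt{uv}=2mn,\]
which are the desired formulas.

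Finally I would verify the two remaining conditions on $m,n$. Any prime $p$ dividing both $m$ and $n$ would divide $a=m^{2}-n^{2}$ and $b=2mn$ (and hence also $c$), contradicting $\gcd(a,b,c)=1$; so $\gcd(m,n)=1$. For the parity condition, $a=m^{2}-n^{2}$ is odd, which rules out $m\equiv n\pmod{2}$. The argument is entirely elementary and I do not anticipate any real obstacle; the only place requiring care is the coprimality claim $\gcd(u,v)=1$, which is what unlocks the use of unique factorization.
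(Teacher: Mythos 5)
Your argument is correct and complete: it is the standard classical proof via the factorization $b^{2}=(c-a)(c+a)$, coprimality of $u=(c-a)/2$ and $v=(c+a)/2$, and unique factorization forcing each to be a square. The paper itself gives no proof of this lemma (it is simply cited as classical), so your write-up supplies exactly the expected argument, and all the delicate points — oddness of $a$ and $c$, $\gcd(u,v)=1$, and the final coprimality and parity checks on $m,n$ — are handled correctly.
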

\begin{lemma}\label{counter_lemma}
Let $a,b,c$ be fixed positive integers with $b$ even and consider the equation:
\begin{equation}\label{squarecoef}
a^{2}x^{2}+b^{2}y^{2}-c^{2}z^{2}=0.
\end{equation}
Let $S$ be a finite set positive integers and define $\mathcal{P}(S)$ to be the set of all $n\in \bZ$ of the form $n=kl$ where $k\in S$ and $l$ is either $1$ or a prime.

Let $h=\max S$. Suppose that \eqref{squarecoef} has a solution with $x,y\in \mathcal{P}(S)$, $z\in \bZ$ and $(ax;by;cz)=1$. Then $x\le h^{2}a(a+b)$, $y\le h^{2}b(a+b/2)$ and 
\[z\le \frac{h^{2}}{c^{2}}\left(a^{2}+ab+\frac{b^{2}}{2}\right).\]
\begin{remark}
We are only really interested in the case $S=\{1\}$, but we have stated the more general result as it follows from the same method.
\end{remark}
\begin{proof}
Suppose $x,y\in \mathcal{P}(S)$, $z\in \bZ$ is a solution. Then if $u=ax$, $v=by$ and $w=cz$, we have $u^{2}+v^{2}=w^{2}$. Without loss of generality $w>0$ and by assumption $(u;v;w)=1$, so $(u,v,w)$ is a primitive Pythagorean triple. Let $m,n$ be integers given by Lemma \ref{pythag_param}. Since $x\in \mathcal{P}(S)$ we may let $x=kl$ for $k\in S$ an integer and $l$ either $1$ or a prime. It follows that
\[u=akl=m^{2}-n^{2}=(m-n)(m+n).\]
Since $l$ is either $1$ or a prime and $m\not\equiv n$ (mod $2$), we have that at least one of $(m\pm n)$ is coprime to $l$. This means at least one of $(m-n)\mid ak$ or $(m+n)\mid ak$ holds. This implies $0<m-n\le ak\le ah$. Similarly, using $v=2mn$ and $n<m$ we get that $n\le bh/2$. Hence $m\le h(a+b/2)$ and $m+n\le h(a+b)$. So
\[x=\frac{(m-n)(m+n)}{a}\le h^{2}(a+b),\]
\[y=\frac{2mn}{b}\le h^{2}\left(a+\frac{b}{2}\right)\]
and
\[z=\frac{m^{2}+n^{2}}{c}\le \frac{h^{2}}{c}\left(a^{2}+ab+\frac{b^{2}}{2}\right).\]
\end{proof}
\end{lemma}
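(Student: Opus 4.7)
The plan is to reduce to a primitive Pythagorean triple and then exploit its classical parametrization. The substitution $u = ax$, $v = by$, $w = cz$ turns \eqref{squarecoef} into $u^{2} + v^{2} = w^{2}$; after possibly negating $z$ we may take $w > 0$, and the hypothesis $(ax;by;cz) = 1$ makes $(u,v,w)$ a primitive Pythagorean triple. Since $b$ is even, so is $v$, so Lemma \ref{pythag_param} produces coprime integers $m > n > 0$ of opposite parities with $u = m^{2} - n^{2}$, $v = 2mn$, and $w = m^{2} + n^{2}$. The remaining task is to bound $m$ and $n$ in terms of $a$, $b$, and $h$, from which the three estimates follow by direct substitution into the parametrization.

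For the bound on $m - n$, write $x = kl$ with $k \in S$, $k \le h$, and $l$ either $1$ or a prime, giving $(m-n)(m+n) = akl$. Because $m$ and $n$ are coprime and of opposite parities, $m - n$ and $m + n$ are themselves coprime (they are both odd, and any common odd prime divisor would have to divide $\gcd(m,n) = 1$). Hence at most one of $m \pm n$ is divisible by $l$, so the other divides $ak$, and in either case $m - n \le ak \le ah$. An analogous argument applied to $y = k'l' \in \mathcal{P}(S)$ with $2mn = bk'l'$ yields $n \le bh/2$: coprimality of $m$ and $n$ forces $l'$ to divide at most one of $m, n$, so the other divides $(b/2)k'$. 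In the subcase $l' \mid n$ we obtain $n \le bh/2$ directly, and in the subcase $l' \mid m$ we obtain $m \le bh/2$, whence $n < m \le bh/2$.

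Combining $m - n \le ah$ with $n \le bh/2$ yields $m \le h(a + b/2)$ and $m + n \le h(a + b)$. Substituting these into $x = (m-n)(m+n)/a$, $y = 2mn/b$, and $z = (m^{2} + n^{2})/c$ delivers the claimed bounds. The main subtlety is the divisibility step: one must verify the coprimality of $m \pm n$ (using both $\gcd(m,n) = 1$ and the opposite-parity property from Lemma \ref{pythag_param}) and handle the two subcases for $y$ based on whether $l'$ divides $m$ or $n$, with the inequality $n < m$ used in the second; the rest is routine bookkeeping.
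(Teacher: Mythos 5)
Your proposal is correct and follows essentially the same route as the paper: the substitution $u=ax$, $v=by$, $w=cz$, the primitive-triple parametrization of Lemma \ref{pythag_param}, and the coprimality/divisibility argument transferring the prime factor $l$ (resp.\ $l'$) to bound $m-n\le ah$ and $n\le bh/2$, then substituting back. The only blemish is cosmetic: in your case analysis for $n\le bh/2$ the conclusions are attached to the wrong subcases (when $l'\mid n$ it is $m$ that divides $(b/2)k'$, giving $n<m\le bh/2$, while when $l'\nmid n$ one gets $n\mid (b/2)k'$ directly), but all the needed ingredients are present and the argument goes through exactly as in the paper.
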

When $c$ is sufficiently large compared to $a,b,h$ we see that the upper bound for $z$ is less than $1$ and so there are no solutions of the type specified. Our goal in constructing counterexamples will be to find a family of equations of this form which are locally soluble and in which $c$ can become arbitrarily large.
\begin{thm}\label{counterpythag}
Let $(a,b,c)$ be a primitive Pythagorean triple (with $b$ even) and let $r$ be an integer with the following properties:
\begin{enumerate}
\item $r>1$ and $r$ is not prime \item For all primes $p\mid r$ we have $p\equiv 1$ (mod $4$)\item$(r;a)=(r;b)=1$.
\end{enumerate}
Then the equation
\begin{equation}\label{counter1}
a^{2}x^{2}+b^{2}y^{2}-c^{2}r^{2}z^{2}=0
\end{equation}
(in variables $x,y,z$) has solutions in $\bR^{+}$ and $\bZ_{p}^{\times}$ for all primes $p$. Let $\mathcal{P}=\mathcal{P}(\{1\})=\{\text{prime numbers}\}\cup \{1\}$.
Then there exists an effective constant $C(a,b,c)$ such that if $x,y\in \mathcal{P}$ and $z\in \bZ$ is a solution of \eqref{counter1}, then
\[z\le \frac{C(a,b,c)}{r}.\]
Hence, if $r>C(a,b,c)$ then \eqref{counter1} has no solutions in the primes and so \eqref{counter1} does not satisfy the prime Hasse principle \ref{pHp}.
\begin{proof}
It is clear that there is a solution in $\bR^{+}$. We show that there is a solution in every $\bZ_{p}^{\times}$. Suppose first that $p\mid r$. Then $p\equiv 1$ (mod $4$), so $-1$ is a quadratic residue (mod $p$) and $p$ is odd. We also have $a,b\not\equiv 0$ (mod $p$). Let $x\in \bZ$ be such that $x^{2}+1\equiv 0$ (mod $p$). Then
\[a^{2}(bx)^{2}+b^{2}a^{2}\equiv c^{2}r^{2}\equiv 0\text{ (mod $p$)},\]
so there is a solution in $(\bZ/p\bZ)^{\times}$ and as $p$ is odd (and does not divide $a$ or $b$), this can be lifted to a solution in $\bZ_{p}^{\times}$ by Lemma \ref{sollift}.

Now suppose $p\nmid r$. Then $(r,r,1)$ is a solution in $\bZ_{p}^{\times}$.

It just remains to show there are no solutions $x,y\in \mathcal{P}$, $z\in\bZ$ when $r$ is large. Lemma \ref{counter_lemma} with $S=\{1\}$ takes care of the case $(ax;by;crz)=1$, so we may suppose that $(ax;by;crz)>1$. Let $p$ be a prime dividing $(ax;by;crz)$. If $p$ divides $x,y$ and $z$, then in fact $x=y=p$, $z\ge p$ and $(a^{2}+b^{2})p^{2}\ge c^{2}r^{2}p^{2}$. But also $a^{2}+b^{2}=c^{2}$ which is impossible as $r>1$. Hence $p$ divides one of $a,b,cr$. The numbers $a,b,c$ are coprime by assumption, and in fact they must be pairwise coprime because of the equation $a^{2}+b^{2}=c^{2}$. We also have $(a;r)=(b;r)=1$, so $a,b,cr$ are pairwise coprime, and $p$ divides exactly one of them. We therefore have $3$ cases.

If $p\mid a$ then $p\mid y,z$ and so $y=p\le a$. If $x\mid b$ then $x\le b$, so $z\le \sqrt{2}ab/(cr)$, so suppose $x\nmid b$. Hence $(ax/p;b)=1$, so $(ax/p,b,crz/p)$ is a primitive Pythagorean triple (with $b$ even) and so there are coprime $m>n>0$ such that $ax/p=m^{2}-n^{2}$, $b=2mn$ and $crz/p=m^{2}+n^{2}$. We then have $m\le b/2$ and so $x\le ax/p<(b/2)^{2}$. Putting this into $a^{2}x^{2}+b^{2}y^{2}=c^{2}r^{2}z^{2}$ and rearranging gives
\[z\le \frac{1}{cr}\left(\frac{a^{2}b^{4}}{16}+a^{2}b^{2}\right)^{1/2}.\]

If $p\mid b$ then $p\mid x, z$, so $x=p\le b$. By a similar argument to the above, we may assume $y\nmid a$ and there exist $m>n>0$ such that $a=m^{2}-n^{2}$, $by/p=2mn$ and $crz/p=m^{2}+n^{2}$. Then $n<m<m+n\le m^{2}-n^{2}=a$, so $y\le by/p=2mn\le 2a^{2}$ and
\[z\le \frac{1}{cr}\left(a^{2}b^{2}+4a^{4}b^{2}\right)^{1/2}.\]

Finally, if $p\mid cr$ then $x=y=p$ and $a^{2}+b^{2}=c^{2}r^{2}z^{2}/p^{2}$. But also $a^{2}+b^{2}=c^{2}$ and so $r^{2}z^{2}/p^{2}=1$ which implies $p=rz$. But we assumed $r>1$ is not prime so this is impossible.
\end{proof}
\end{thm}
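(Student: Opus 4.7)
The plan is to verify local solubility in each required completion, and then to bound the size of any prime solution by combining Lemma~\ref{counter_lemma} with a short gcd case analysis. Real solubility is trivial: any $x,y>0$ determines a unique $z>0$. For $\bZ_{p}^{\times}$-solubility I would split on whether $p\mid r$. If $p\nmid r$, the tuple $(r,r,1)$ is a solution by the identity $a^{2}+b^{2}=c^{2}$, and all three coordinates are $p$-adic units; note $r$ is odd because every prime factor of $r$ is $\equiv 1\pmod{4}$, so this also covers $p=2$. If $p\mid r$, then $p\equiv 1\pmod{4}$, so there exists $t$ with $t^{2}\equiv -1\pmod{p}$, and since $p\nmid ab$ the tuple $(bt,a,1)$ satisfies $a^{2}x^{2}+b^{2}y^{2}\equiv 0\equiv c^{2}r^{2}z^{2}\pmod{p}$ with each coordinate a unit. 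Because $p$ is odd and $p\nmid k=2$, we have $\xi(p)=1$, and the coefficient gcd hypothesis of Lemma~\ref{sollift} is satisfied, so this congruence solution lifts to $\bZ_{p}^{\times}$.

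For the upper bound on $z$, I would apply Lemma~\ref{counter_lemma} with $(a,b,c)$ replaced by $(a,b,cr)$ and with $S=\{1\}$, so that $h=1$ and $\mathcal{P}(S)=\mathcal{P}$. This directly gives $z\ll_{a,b,c} 1/r$ whenever $(ax;by;crz)=1$. The main obstacle is then handling the non-coprime case: pick any prime $p\mid(ax;by;crz)$. Primitivity of the Pythagorean triple forces $a,b,c$ to be pairwise coprime (using $a^{2}+b^{2}=c^{2}$), and combined with $(r;a)=(r;b)=1$ this means that $a,b,cr$ are pairwise coprime and $p$ divides exactly one of them. The constraint $x,y\in\mathcal{P}$ then forces $x=p$ whenever $p\mid x$, and similarly for $y$.

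The case analysis then proceeds as follows. If $p$ divides $x$, $y$ and $z$ simultaneously, then $x=y=p$ and $z\ge p$, giving $(a^{2}+b^{2})p^{2}\ge c^{2}r^{2}p^{2}$, which contradicts $a^{2}+b^{2}=c^{2}$ and $r>1$. If $p\mid a$, then $p\mid y$ and $p\mid z$, forcing $y=p\le a$; either $x\mid b$ (so $x\le b$), or else $(ax/p,b,crz/p)$ is a primitive Pythagorean triple and Lemma~\ref{pythag_param} bounds $x\le (b/2)^{2}$; substituting back into the original equation yields $z\ll_{a,b,c} 1/r$ in either sub-subcase. The case $p\mid b$ is symmetric. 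Finally, if $p\mid cr$, then $x=y=p$ and cancelling $c^{2}=a^{2}+b^{2}$ from both sides collapses the equation to $p=rz$; this is impossible precisely because $r>1$ is not prime, and this is the unique place where the non-primality hypothesis on $r$ is used. Collecting the bounds from every subcase produces an effectively computable $C(a,b,c)$ with $z\le C(a,b,c)/r$, from which the final assertion about the failure of the prime Hasse principle for $r>C(a,b,c)$ is immediate.
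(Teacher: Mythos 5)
Your proposal is correct and follows essentially the same route as the paper: the same local solubility argument (the point $(r,r,1)$ when $p\nmid r$, a square root of $-1$ plus Lemma \ref{sollift} when $p\mid r$), Lemma \ref{counter_lemma} (applied with $cr$ in place of $c$) for the case $(ax;by;crz)=1$, and the identical gcd case analysis with the non-primality of $r$ used exactly where the paper uses it, in the case $p\mid cr$.
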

\begin{remark}
If we lose the assumption that $r$ is not prime then there will still be no prime solutions, (and hence \ref{pHp} still fails), but there will be exactly one solution in $\mathcal{P}$, namely $(r,r,1)$. The proof of this is exactly the same.
\end{remark}

We conclude with some counterexamples for $k\ge3$, which are provided by Fermat's last theorem.
\begin{thm}\label{counterfermat}
Let $k\ge 3$ and
\[Q=\prod_{p<p_{0}(3,k)} p^{\xi(p)},\]
where $p_{0}$ is as in Lemma \ref{chinonzero} and $\xi(p)$ is as defined in \eqref{xidef}.

Let $a,b,c$ be pairwise coprime positive integers such that $Q\mid ab$, and for every prime $p\mid c$, $-1$ is a $k$th power (mod $p$). Then the equation
\[a^{k}x^{k}+b^{k}y^{k}-c^{k}z^{k}=0\]
is a counterexample to the prime Hasse principle.
\begin{proof}
Firstly, it follows from Fermat's last theorem that there are no positive integer solutions and hence no prime solutions. The equation clearly always has solutions in $\bR^{+}$, so it remains to show that there is a solution in $\bZ_{p}^{\times}$ for all $p$. By Lemma \ref{sollift}, it suffices to find a solution in $(\bZ/p^{\xi(p)}\bZ)^{\times}$. Suppose $p<p_{0}(3,k)$. Because $Q\mid ab$ and $(a;b)=1$, we find that $p^{\xi(p)}$ divides one of $a,b$ and the remaining coefficients are invertible (mod $p$). Therefore, if $p\mid a$ then $x=1$, $y=c$, $z=b$ is a solution in $(\bZ/p^{\xi(p)}\bZ)^{\times}$ and if $p\mid b$ then $x=c$, $y=1$, $z=a$ is a solution.

If $p\ge p_{0}(3,k)$ and $p\mid a$ or $p\mid b$ then $p\nmid k$, so $\xi(p)=1$ and by the same argument we get a solution.

If $p\mid c$ then $p\ge p_{0}(3,k)$ and there exists $u$ such that $u^{k}\equiv -1$ (mod $p$). Then $x=ub$, $y=a$ and $z=1$ is a solution in $(\bZ/p\bZ)^{\times}$.

The remaining case is when $p\ge p_{0}(3,k)$ and $p$ does not divide any of $a,b,c$. In this case, Lemma \ref{chinonzero} gives us a solution in $\bZ_{p}^{\times}$.
\end{proof}
\end{thm}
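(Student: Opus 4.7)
The plan is to verify the two defining conditions of a counterexample to the prime Hasse principle: that no prime solutions exist, and that local solubility holds in $\bR^{+}$ and in $\bZ_{p}^{\times}$ for every prime $p$. Real positive solubility is immediate since one can freely solve for $z$ given any positive $x,y$, and the absence of prime solutions follows by rewriting the equation as $(ax)^{k}+(by)^{k}=(cz)^{k}$: for primes $x,y$ the quantities $ax,by$ are positive integers, so any $z\neq 0$ would yield a nontrivial triple contradicting Fermat's Last Theorem for $k\ge 3$, while $z=0$ forces $a^{k}x^{k}+b^{k}y^{k}=0$, impossible.

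The main work is $p$-adic solubility, which I would split into cases matching the construction of $Q$. If $p<p_{0}(3,k)$ or $p\mid k$, then $p^{\xi(p)}\mid Q\mid ab$, and by pairwise coprimality of $a,b,c$ the entire prime power $p^{\xi(p)}$ sits inside exactly one of $a,b$. Assuming $p^{\xi(p)}\mid a$, the tuple $(x,y,z)=(1,c,b)$ satisfies $a^{k}+b^{k}c^{k}-c^{k}b^{k}=a^{k}\equiv 0\pmod{p^{\xi(p)}}$ with all three variables coprime to $p$, and Lemma \ref{sollift} lifts this to $\bZ_{p}^{\times}$; the case $p^{\xi(p)}\mid b$ is symmetric via $(c,1,a)$.

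The remaining primes have $p\ge p_{0}(3,k)$ and $p\nmid k$, so $\xi(p)=1$ and it suffices to solve modulo $p$. If $p\mid a$ or $p\mid b$, the same explicit tuples work. If $p\mid c$, the hypothesis supplies $u$ with $u^{k}\equiv -1\pmod{p}$, and then $(x,y,z)=(ub,a,1)$ gives $a^{k}b^{k}u^{k}+a^{k}b^{k}\equiv 0\pmod{p}$ with all three entries units. Finally, if $p\nmid abc$, all coefficients are units mod $p$ and Lemma \ref{chinonzero} directly yields $\chi_{p}>0$ since $p\ge p_{0}(3,k)$.

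The only real subtlety, and the point that the definition of $Q$ is engineered to handle, is ensuring that for small or wildly ramified primes the full power $p^{\xi(p)}$ (not merely $p$) divides one of $a$ or $b$; without this one could not produce an $(\bZ/p^{\xi(p)}\bZ)^{\times}$-solution for Lemma \ref{sollift} to lift, and the naive mod-$p$ argument would fail for primes where Hensel's lemma requires more lifting room. Everything else amounts to a direct case check using tools already established in Section \ref{singsersection}.
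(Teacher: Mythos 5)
Your proposal is correct and follows essentially the same argument as the paper: Fermat's Last Theorem rules out prime solutions, and $p$-adic solubility is handled by the same case split (the prime power $p^{\xi(p)}\mid ab$ for $p<p_{0}(3,k)$ or $p\mid k$, the explicit tuples $(1,c,b)$, $(c,1,a)$, $(ub,a,1)$, and Lemma \ref{chinonzero} when $p\nmid abc$), with Lemma \ref{sollift} providing the lift to $\bZ_{p}^{\times}$. The only cosmetic difference is that you fold the conditions ``$p<p_{0}(3,k)$ or $p\mid k$'' into a single case matching the definition of $Q$, whereas the paper treats $p\ge p_{0}(3,k)$ separately and uses that this forces $p\nmid k$; the content is identical.
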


\providecommand{\bysame}{\leavevmode\hbox to3em{\hrulefill}\thinspace}

\end{document}